\theoremstyle{theorem}
\newtheorem{theorem}{Theorem}[section]
\newtheorem{lemma}[theorem]{Lemma}
\newtheorem{corollary}[theorem]{Corollary}
\newtheorem{proposition}[theorem]{Proposition}
\theoremstyle{definition}
\newtheorem{assumption}{Assumption}
\newtheorem{definition}[theorem]{Definition}
\numberwithin{equation}{section}
\title[Error estimate of the CIP scheme for advection equations]{Error estimates of the cubic interpolated pseudo-particle scheme for one-dimensional advection equations}
\author{Takahito Kashiwabara}
\address{Graduate School of Mathematical Sciences, The University of Tokyo, 3-8-1, Komaba, Meguro-ku, Tokyo, 153-8914, Japan}
\curraddr{}
\email{tkashiwa@ms.u-tokyo.ac.jp}
\author{Haruki Takemura}
\address{Graduate School of Mathematical Sciences, The University of Tokyo, 3-8-1, Komaba, Meguro-ku, Tokyo, 153-8914, Japan}
\curraddr{}
\email{takemura-haruki@g.ecc.u-tokyo.ac.jp}
\thanks{This work was supported by Grant-in-Aid for Early-Career Scientists No.\ 20K14357. and Grant-in-Aid for JSPS Fellows No.\ 24KJ0964. }
\subjclass[2020]{Primary 65M06}
\keywords{CIP method, advection equation, semi-Lagrangian method, Hermite interpolation, spline interpolation}
\date{}
\newcommand{\N}{\mathbb{N}} 
\newcommand{\Z}{\mathbb{Z}} 
\newcommand{\R}{\mathbb{R}} 
\newcommand{\T}{\mathbb{T}} 
\newcommand{\F}{\mathscr{F}}
\newcommand{\I}{I_h^3} 
\newcommand{\Ph}{\Pi_h^3} 
\newcommand{\V}{V_h^{1,3}} 
\newcommand{\e}{\varepsilon}
\newcommand{\VV}{V_h^{2,3}}
\newcommand{\abs}[1]{\left| #1 \right|}
\newcommand{\absb}[1]{\Big| #1 \Big|}
\newcommand{\norm}[2]{\left\| #2 \right\|_{#1}}
\newcommand{\snorm}[2]{\left| #2 \right|_{#1}}
\begin{document}

\maketitle

\begin{abstract}
  Error estimates of cubic interpolated pseudo-particle scheme (CIP scheme) 
  for the one-dimensional advection equation with periodic boundary conditions are presented. The CIP scheme is a semi-Lagrangian method involving the piecewise cubic Hermite interpolation. Although it is numerically known that the space-time accuracy of the scheme is third order, its rigorous proof remains an open problem. In this paper, denoting the spatial and temporal mesh sizes by $ h $ and  $ \Delta t $ respectively, we prove an error estimate $ O(\Delta t^3 + \frac{h^4}{\Delta t}) $ in $ L^2 $ norm theoretically, which justifies the above-mentioned prediction if $ h = O(\Delta t) $. The proof is based on properties of the interpolation operator; the most important one is that it behaves as the $ L^2 $ projection for the second-order derivatives. We remark that the same strategy perfectly works as well to address an error estimate for the semi-Lagrangian method with the cubic spline interpolation. 
\end{abstract} 

\section{Introduction}\label{sec:introduction}
Cubic interpolated pseudo-particle scheme (CIP scheme) is a numerical method for hyperbolic partial differential equations. 
The scheme was first presented by Takewaki et al.~\cite{HTANTY85}, 
and has been applied to various partial differential equations 
including hydrodynamic equations~\cite{HTTY87,TY+91}, 
KdV equations~\cite{TYTA91}, 
and multi-dimensional shallow water equations~\cite{KTYOTY09}. 
Numerical results indicate that 
the CIP scheme for the one-dimensional advection equation 
has third-order accuracy in time and space~\cite{TY+04}. 
Since the CIP scheme is an explicit method, 
it is relatively easy to implement and 
it requires shorter computational time. 

We briefly introduce the CIP scheme. 
Let us consider the one-dimensional advection equation with a constant velocity $ u $
\begin{gather}
  \begin{cases}
    \partial_t \varphi(x,t) + u \partial_x \varphi(x,t) = 0, & (x,t) \in \T \times [0,T], \\
    \varphi(x,0) = \varphi_0 (x), & x \in \T,\label{pde-const}
  \end{cases}
\end{gather}
where $ \T = \R / \mathbb{Z} $ stands for a interval $ [0,1] $ with periodic boundary conditions, 
while we will address more general cases in which $ u $ depends on $ x $ and $ t $ in this paper. 
We set time steps $ 0 = t^0 < t^1 < \cdots < t^N = T $, time step sizes $ \Delta t_n = t^{n+1} - t^{n} $
and a spatial grid $ 0 = x_0 < x_1 < \cdots < x_M = 1 $. 
We denote an approximation of $ \varphi(\cdot,t^n) $ by $ \varphi_h^n $. 
The CIP scheme for~\eqref{pde-const} is described as follows: 
\begin{gather}
  \varphi_h^0 = \I \varphi_0, \\
  \varphi_h^{n+1} = \I (\varphi_h^n (\cdot - u \Delta t_n)), \quad n = 0,1,\ldots,N-1,\label{scheme-const}
\end{gather}
where  $ \I: C^1(\T) \to C^1(\T) $ is a cubic Hermite interpolation operator defined in Subsection~\ref{subsec:1-preliminaries}. As shown in~\eqref{scheme-const}, the CIP scheme for the advection equation with a constant velocity is repetition of the translation $ \varphi_h^n \mapsto \varphi_h^n(\cdot - u\Delta t_n) $ and the cubic Hermite interpolation $ \varphi_h^n(\cdot - u\Delta t_n) \mapsto \I (\varphi_h^n(\cdot - u\Delta t_n)) $. 

We note that the CIP method above differs from the semi-Lagrangian method with cubic Hermite interpolation proposed by~\cite{AL09}, in the way to compute first-order derivatives of functions to be interpolated.  In fact, the former obtains them by computing the differentiated form of the solution formula of the PDE, whereas the latter approximates them by a finite difference method. 

Nara--Takaki~\cite{MNRT02} studied the stability of the CIP scheme for the one-dimensional advection equation with a constant velocity. Tanaka et al.~\cite{DT+15} analyzed the stability of a scheme in which functions of discrete variables in the CIP scheme are replaced by functions of continuous variables. They proved that these schemes are not stable in $L^2$, while the $ L^2 $ norm of the approximation of $ \varphi(\cdot, t^n) $ is bounded by the $ H^1 $ norm of the initial value $ \varphi_0 $. In these studies, where the von Neumann stability analysis is applied, the conditions that the spatial grid is uniform and that $ u $ is constant are imposed. Tanaka~\cite{DT14} proved convergence of the scheme with functions of continuous variables without rates. Furthermore, Besse~\cite{NB08} conducted convergence analysis of a semi-Lagrangian scheme using the cubic Hermite interpolation on a uniform mesh for the one-dimensional Vlasov--Poisson equation, which exploits a similar strategy to the CIP scheme for the advection equation.

The primary challenge in deriving $ L^2 $ error estimates for the CIP scheme stems from the fact that the cubic Hermite interpolation operator is unbounded in $ L^2(\T) $. In fact, if a mesh is given, one can easily construct a function having an arbitrarily small $ L^2 $ norm, whose Hermite interpolation is a non-zero constant (see Proposition~\ref{prop:unbounded}). In order to overcome this issue, we introduce a weighted $ H^2 $ norm defined in Subsection~\ref{subsec:CIP-stability}, which recovers the $ L^2 $ norm in the limit where meshes are refined suitably. We see that operator $ \I $ is stable with respect to the weighted $ H^2 $ norm, which enables us to prove $ L^2 $ convergence of the CIP scheme, even on non-uniform meshes. Since we obtain error bounds in the weighted $ H^2 $ norm, we also have error estimates in $ H^1 $ and $ H^2 $. However, they are suboptimal compared with what numerical experiments suggest. 

In the context of geophysical fluid dynamics, various kinds of semi-Lagrangian methods have attracted much attention~\cite{MFRF13, DP76, ASJC91}. Among them, the semi-Lagrangian methods with spline interpolation is known as a less dispersive and less diffusive scheme~\cite{LR+98}. Ferretti--Mehrenberger~\cite{RFMM20} investigated the stability of the semi-Lagrangian schemes with spline or symmetric Lagrange interpolation on uniform meshes for advection equations by regarding them as Lagrange--Galerkin schemes. In addition, the stability and convergence analyses of semi-Lagrangian schemes with spline or symmetric Lagrange interpolation for Vlasov--Poisson system are conducted in~\cite{NBMM08}. However, it seems to be non-trivial to apply their strategies on the cases with non-uniform meshes. On the other hand, our approach is applicable for non-uniform meshes. Since the cubic spline interpolation has a similar property to the cubic Hermite interpolation, we can obtain error estimates with respect to the weighted $ H^2 $ norm for the scheme with spline interpolation in the same way as for the CIP scheme. 

It is also known that the CIP scheme has low phase errors~\cite{TUTKTA97} (see~\cite{KRNW66} for a general discussion on phase errors). Although we do not provide theoretical analyses of phase errors, we examine through numerical experiments that the phase error of the CIP scheme is significantly smaller than those of semi-Lagrangian schemes with spline or symmetric Lagrange interpolation. 

The paper is organized as follows. In Section~\ref{sec:scheme}, we introduce the CIP scheme for the one-dimensional advection equation. In Section~\ref{sec:1-math}, we examine the stability and convergence of the CIP scheme theoretically. In Section~\ref{sec:spline}, we employ the same approach to the semi-Lagrangian method with the cubic spline interpolation. In Section~\ref{sec:experiment}, we show some numerical results on norm errors and phase errors. 

\section{CIP methods for one-dimensional advection equations}\label{sec:scheme}

\subsection{One-dimensional advection equations}

We consider the one-dimensional advection equation 
\begin{gather}
  \begin{cases} \displaystyle
    \partial_t \varphi(x,t) + u(x,t) \partial_x \varphi(x,t) = 0, & (x,t) \in \T \times [0,T], \\
    \varphi(x,0) = \varphi_0 (x), & x \in \T, 
  \end{cases}\label{PDE}
\end{gather}
where the velocity $ u(x,t) $ and the initial value $ \varphi_0(x) $ are given functions. 

We assume that $ u $ is smooth. Let $ \xi: [0,T] \times \T \times [0,T] \to \T $ be the characteristic curve of~\eqref{PDE}, that is, $ \xi(s;x,t) $ is defined as the solution of 
\begin{gather}
  \begin{cases} \displaystyle
    \frac{d}{ds} \xi (s;x,t) = u(\xi(s;x,t),s), & s \in [0,T], \\
    \xi(t;x,t) = x 
  \end{cases} \label{xi-ode}
\end{gather}
for $ (x,t) \in \T \times [0,T] $. By differentiating~\eqref{xi-ode} with respect to $ x $, we see that $ \xi_x(s;x,t) $ is the solution of 
\begin{gather}
  \begin{cases} \displaystyle
    \frac{d}{ds} \xi_{x}(s;x,t) = u_x(\xi(s;x,t),s) \xi_x(s;x,t), & s \in [0,T], \\
    \xi_x(t;x,t) = 1. 
  \end{cases} \label{xix-ode}
\end{gather}
Therefore we have an explicit expression of $ \xi_x $ as
\begin{gather}
  \xi_x(s;x,t) = \exp\left( \int_{t}^{s} u_x(\xi(\tau;x,t),\tau) d\tau \right).\label{xix-exp}
\end{gather}

We replace $ t $ and $ x $ in the first equation in~\eqref{PDE} with $ s $ and $ \xi(s; x, t) $ respectively to obtain 
\begin{gather}
  \partial_s \varphi(\xi(s;x,t),s) + u(\xi(s;x,t), s) \cdot \partial_x \varphi(\xi(s;x,t),s) = 0. 
\end{gather}
Combining with~\eqref{xi-ode}, we obtain 
\begin{gather}
  \partial_s \varphi(\xi(s;x,t),t) + \partial_s \xi(s;x,t) \cdot \partial_x \varphi(\xi(s;x,t),s) = 0, 
\end{gather}
which implies 
\begin{gather}
  \frac{d}{ds} \left[\varphi (\xi(s;x,t),s) \right] = 0. 
\end{gather}
By using the second equation of~\eqref{xi-ode}, we have
\begin{align}
    \varphi(x,t) & = \varphi(\xi(s;x,t), s) \label{exact-solution-0}
\end{align}
and 
\begin{align}
  \varphi_x (x,t) & = \xi_x(s;x,t) \cdot \varphi_x(\xi(s;x,t), s) \label{exact-solution-1}
\end{align}
for any $ x \in \T $ and $ t, s \in [0,T] $. 
Let us set time steps $ 0 = t^0 < t^1 < \cdots < t^N = T $ and define $ \xi^n \coloneqq \xi(t^n;\cdot,t^{n+1}) $. 
Equations~\eqref{exact-solution-0} and~\eqref{exact-solution-1} imply 
\begin{align}
  \varphi(x,t^{n+1}) & = \varphi(\xi^n(x), t^{n}) \label{exact-time-evolution-0}
\end{align}
and 
\begin{align}
\varphi_x (x,t^{n+1}) & = \xi_x^n(x) \cdot \varphi_x(\xi^n(x), t^n), \label{exact-time-evolution-1}
\end{align}
where $ \varphi^n = \varphi(\cdot, t^n) $. 

\subsection{CIP scheme for the advection equation}\label{subsec:CIP-scheme}

In this subsection, we introduce the CIP scheme for~\eqref{PDE}. 
We set a spatial mesh $ 0 = x_0 < x_1 < \cdots < x_M = 1 $. Let 
\begin{gather}
  \Delta t_n = t^{n+1} - t^n, \quad
  \Delta t = \max_{0 \leq n \leq N-1} \Delta t_n, \quad
  \Delta t^\prime = \min_{0 \leq n \leq N-1} \Delta t_n, \\
  h_j = x_{j+1} - x_j, \quad
  h = \max_{0 \leq j \leq M-1} h_j. 
\end{gather}
In this paper, we always suppose $ \Delta t < 1 $. 
We define the function space of piecewise cubic functions of class $ C^1 $ by
\begin{align}
  \V = \left\{v \in C^1(\T) \mathrel{}\middle|\mathrel{} v|_{[x_j,x_{j+1}]} \in \mathbb{P}_3([x_j,x_{j+1}]), \quad j = 0, \dots, M-1 \right\}, 
\end{align}
where $ \mathbb{P}_3([x_j,x_{j+1}]) $ is the set of all polynomials of degree at most $ 3 $ on the interval $ [x_j,x_{j+1}] $.

Let $ F_j^n $ and $ G_j^n $ denote the approximations of $ \varphi(t^n, x_j) $ and $ \varphi_x(t^n, x_j) $, respectively. 
We are now ready to describe the concrete procedure of the CIP scheme as follows. 
\begin{enumerate}
  \item Discretize the initial value as
  \begin{gather}
      F_j^0 = \varphi_0(x_j), \quad G_j^0 = D \varphi_0(x_j), \quad j = 0,1,\ldots,M-1,  
  \end{gather}
  where $ D $ denotes $ \frac{d}{dx} $. 
  \item Assume that we have already obtained $ \{F_j^n\}_{j=0}^{M-1} $ and $ \{G_j^n\}_{j=0}^{M-1} $ for some $ n \in \{0,1,\ldots,N-1\} $. 
  We denote the approximations of $ \xi^n(x_j) $ and $ \xi_x^n(x_j) $ by $ X_{0,j}^n $ and $ X_{1,j}^n $ respectively. 
  We compute~\eqref{xi-ode} and~\eqref{xix-ode} approximately using the third-order Runge--Kutta method as follows: 
  \begin{gather}
    \begin{split}
      \bm{y}_j & = (x_j, 1) \\
      \bm{k}_{j,1}^{n} & = \bm{u}\left(\bm{y}_j, t^{n+1}\right), \\
      \bm{k}_{j,2}^{n} & = \bm{u}\left(\bm{y}_j - \frac{\Delta t_n}{2} \bm{k}_{j,1}^{n}, t^{n+1} - \frac{\Delta t_n}{2} \right), \\
      \bm{k}_{j,3}^{n} & = \bm{u}\left(\bm{y}_j - \Delta t_n \left(- \bm{k}_{j,1}^{n} + 2 \bm{k}_{j,2}^{n} \right), t^{n+1} - \Delta t_n \right), \\
      (X_{0,j}^n, X_{1,j}^n) 
      & = \bm{y}_j - \Delta t_n \left(\frac{1}{6} \bm{k}_{j,1}^{n} + \frac{4}{6} \bm{k}_{j,2}^{n} + \frac{1}{6} \bm{k}_{j,3}^{n} \right), 
     \label{RK1}
    \end{split} 
  \end{gather}
  where $ \bm{u}: \T\times\R\times[0,T] \to \R \times \R $ is defined by 
  \begin{gather}
    \bm{u}(y_0, y_1, t) \coloneqq (u(y_0, t), y_1 u_x(y_0, t)). 
    \label{def-bmu}
  \end{gather}
  Here $ \bm{k}_{j,1}^n $, $ \bm{k}_{j,2}^n $ and $ \bm{k}_{j,3}^n $ are auxiliary variables of the Runge--Kutta method. 
  \item Define a function $ \varphi_h^n \in \V $ such that $ \varphi_h^n(x_j) = F_j^n $ and $ D \varphi_h^n(x_j) = G_j^n $ for $ j = 0,\dots,M-1 $ (i.e., the cubic Hermite interpolation). We compute~\eqref{exact-time-evolution-0} and~\eqref{exact-time-evolution-1} approximately 
  \begin{gather} 
    F_j^{n+1} = \varphi_h^n(X_{0,j}^n),\label{1-04-1} \\
    G_j^{n+1} = X_{1,j}^n \cdot D \varphi_h^n (X_{0,j}^n) \label{1-04-2} 
  \end{gather}
  for $ j = 0,1,\dots,M-1$. 
\end{enumerate} 

\section{Mathematical analysis of the CIP scheme}\label{sec:1-math}

\subsection{Preliminaries}\label{subsec:1-preliminaries}
In the previous section, we explained the procedure of the CIP scheme in terms of grid functions, but it is difficult to handle with this form. Therefore, as a preparation for discussions of stability and convergence, we express the CIP scheme with the help of functions on $ \T $ instead of the discrete functions on the grids. 

We introduce the continuous version of~\eqref{RK1}. For $ n = 0,1,\ldots,N-1 $, let $ X^n : \T \to \T $ be defined by 
\begin{align}
  \begin{split}\label{RK0} 
    k_{1}^{n}(x) & = u(x,t^{n+1}), \\
    k_{2}^{n}(x) & = u\left(x - \frac{\Delta t_n}{2} k_{1}^{n}(x),t^{n+1} - \frac{\Delta t_n}{2} \right), \\
    k_{3}^{n}(x) & = u\left(x - \Delta t_n \left( -k_{1}^{n}(x) + 2 k_{2}^{n}(x) \right),t^{n+1} - \Delta t_n \right), \\
    X^{n}(x) & = x - \Delta t_n \left(\frac{1}{6} k_{1}^{n}(x) + \frac{4}{6} k_{2}^{n}(x) + \frac{1}{6} k_{3}^{n}(x) \right). 
  \end{split}
\end{align}
Moreover, we define the cubic Hermite interpolation operator $ \I: C^1(\T)\rightarrow \V $ by 
\begin{align}
    (\I g)(x_j) = g(x_j), \quad D (\I g)(x_j) = g_x(x_j), \quad j = 0,\dots,M-1. 
   \label{1-59}
\end{align}
Then, as presented in the following lemma, we can equivalently rewrite the CIP scheme, originally formulated in terms of grid-based functions in Section~\ref{sec:scheme}, as a standard function-based relation. 

\begin{lemma}\label{lem:scheme-continuous} 
  The numerical solutions $ \{\varphi_h^n\}_{n=0}^N $ defined in Subsection~\ref{subsec:CIP-scheme} satisfy 
  \begin{gather}
    \varphi_h^0 = \I \varphi_0,\label{scheme-continuous-1} \\ 
    \varphi_h^{n+1} = \I \left(\varphi_h^n \circ X^n \right) \label{scheme-continuous-2}
  \end{gather}
  for $ n = 0,1,\ldots,N-1 $. 
\end{lemma}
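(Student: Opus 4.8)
The plan is to prove the two identities by induction on $n$, using as the sole structural fact that an element of $\V$ is uniquely determined by its values and first derivatives at the nodes $x_0,\dots,x_{M-1}$; this is exactly the content of the Hermite interpolation conditions \eqref{1-59}. Hence, to identify a given function of $\V$ with $\I g$, it suffices to check that it agrees with $g$ in value and first derivative at every node. Both $\varphi_h^{n+1}$ and $\I(\varphi_h^n \circ X^n)$ lie in $\V$ (the composition $\varphi_h^n \circ X^n$ belongs to $C^1(\T)$ since $\varphi_h^n \in \V$ and $X^n$ is smooth), so the whole lemma reduces to matching nodal data.

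The crucial preliminary step is to relate the grid Runge--Kutta outputs $(X_{0,j}^n, X_{1,j}^n)$ of \eqref{RK1} to the continuous map $X^n$ of \eqref{RK0}. I claim that
\[
  X_{0,j}^n = X^n(x_j), \qquad X_{1,j}^n = (X^n)'(x_j).
\]
To see this I would differentiate the definitions of $k_1^n, k_2^n, k_3^n$ in \eqref{RK0} with respect to $x$ and evaluate at $x_j$, checking inductively that $\bm{k}_{j,i}^n = (k_i^n(x_j), (k_i^n)'(x_j))$ for $i = 1, 2, 3$. The point is that the augmented velocity field $\bm{u}$ in \eqref{def-bmu} is precisely the right-hand side of the coupled characteristic/variational system \eqref{xi-ode}--\eqref{xix-ode}: its second component $y_1 u_x(y_0, t)$ is what the chain rule produces upon differentiating the first component in $x$. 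Consequently the second coordinate in \eqref{RK1}, initialized at $y_1 = 1$ in accordance with the initial condition $\xi_x(t;x,t)=1$, tracks exactly the $x$-derivative of the first coordinate, so that running the three-stage scheme on the augmented system coincides with differentiating the scalar map $X^n$. Summing the stages with the weights $\frac16, \frac46, \frac16$ then yields the claimed identities.

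With this in hand the induction is routine. For the base case, $\varphi_h^0 \in \V$ satisfies $\varphi_h^0(x_j) = F_j^0 = \varphi_0(x_j)$ and $D\varphi_h^0(x_j) = G_j^0 = D\varphi_0(x_j)$, which are precisely the interpolation conditions defining $\I \varphi_0$; uniqueness gives \eqref{scheme-continuous-1}. For the inductive step, the update rule \eqref{1-04-1} together with $X_{0,j}^n = X^n(x_j)$ gives $\varphi_h^{n+1}(x_j) = F_j^{n+1} = (\varphi_h^n \circ X^n)(x_j)$, while \eqref{1-04-2}, the identity $X_{1,j}^n = (X^n)'(x_j)$, and the chain rule give
\[
  D\varphi_h^{n+1}(x_j) = G_j^{n+1} = (X^n)'(x_j)\, D\varphi_h^n(X^n(x_j)) = (\varphi_h^n \circ X^n)'(x_j).
\]
Thus $\varphi_h^{n+1}$ matches $\varphi_h^n \circ X^n$ in nodal values and derivatives, so $\varphi_h^{n+1} = \I(\varphi_h^n \circ X^n)$ by uniqueness, establishing \eqref{scheme-continuous-2}.

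I expect the main obstacle to be the preliminary identification $X_{1,j}^n = (X^n)'(x_j)$: one must carry out the stagewise differentiation of \eqref{RK0} carefully and recognize that it reproduces \eqref{RK1} with the augmented field \eqref{def-bmu}. Conceptually this is the statement that the explicit Runge--Kutta discretization commutes with passing to the variational (tangent-linear) equation, but verifying it at the level of the three concrete stages, rather than merely invoking the general principle, is where the bookkeeping lies. Everything else follows directly from the defining property of the Hermite interpolation.
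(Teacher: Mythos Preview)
Your proposal is correct and follows essentially the same route as the paper: reduce \eqref{scheme-continuous-2} to matching nodal values and derivatives, then establish $X_{0,j}^n = X^n(x_j)$ and $X_{1,j}^n = (X^n)'(x_j)$ by differentiating \eqref{RK0} stage by stage and recognizing the result as the augmented Runge--Kutta system \eqref{RK1} with field \eqref{def-bmu}. The only cosmetic difference is that you frame the argument as an induction on $n$, whereas the paper verifies each time step directly; since your ``inductive step'' never actually invokes the hypothesis, the two presentations coincide.
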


\begin{proof}
  Equation~\eqref{scheme-continuous-1} is obvious. To prove~\eqref{scheme-continuous-2}, it is enough to show that for any $ n = 0,1,\ldots,N-1 $ and $ j = 0,1,\ldots,M-1 $
  \begin{gather}
    \varphi_h^{n+1}(x_j) = \varphi_h^n(X^n (x_j)), \label{scheme-continuous-3} \\
    D \varphi_h^{n+1}(x_j) = \left. \frac{d}{dx} \varphi_h^n(X^n (x)) \right|_{x = x_j}. \label{scheme-continuous-4}
  \end{gather}
  We define vector-valued functions $ \bm{k}_p^n(x) = (k_p^n(x), D k_p^n(x)) $ for $ p = 1,2,3 $ and $ \bm{y}(x) = (x,1) $, where $ k_p^n $ is defined in~\eqref{RK0}. 
  Then we have 
  \begin{align}
    \begin{split}\label{RK-conti-2}
      \bm{k}_{1}^{n}(x) & = \bm{u}(\bm{y}(x),t^{n+1}), \\
      \bm{k}_{2}^{n}(x) & = \bm{u}\left(\bm{y}(x) - \frac{\Delta t_n}{2} \bm{k}_{1}^{n}(x),t^{n+1} - \frac{\Delta t_n}{2} \right), \\
      \bm{k}_{3}^{n}(x) & = \bm{u}\left(\bm{y}(x) - \Delta t_n \left( -\bm{k}_{1}^{n}(x) + 2 \bm{k}_{2}^{n}(x) \right),t^{n+1} - \Delta t_n \right), \\
        (X^n(x), X_{x}^n(x)) & = \bm{y}(x) - \Delta t_n \left(\frac{1}{6} \bm{k}_{1}^{n}(x) + \frac{4}{6} \bm{k}_{2}^{n}(x) + \frac{1}{6} \bm{k}_{3}^{n}(x) \right), 
    \end{split}
  \end{align}
  where $ \bm{u} $ is defined by~\eqref{def-bmu}. 
  Here the first components of~\eqref{RK-conti-2} are identical to~\eqref{RK0}, and the second components are the first-order derivatives of~\eqref{RK0}. 
  We substitute $ x = x_j $ in~\eqref{RK-conti-2} and then compare~\eqref{RK1} and~\eqref{RK-conti-2} to obtain 
  \begin{gather}
    X^n(x_j) = X_{0,j}^n, \label{scheme-continuous-5} \\ 
    X_{x}^n(x_j) = X_{1,j}^n. \label{scheme-continuous-6}
  \end{gather}
  From~\eqref{1-04-1} and~\eqref{scheme-continuous-5}, we have~\eqref{scheme-continuous-3}. 
  Furthermore, from~\eqref{scheme-continuous-5} and~\eqref{scheme-continuous-6}, we have 
  \begin{align}
    \left. \frac{d}{dx} \varphi_h^n(X^n (x)) \right|_{x = x_j} 
    &= X_x^n(x_j) \cdot (D \varphi_h^n) (X^n(x_j)) \\
    &= X_{1,j}^n \cdot (D \varphi_h^n) (X_{0,j}^n). 
  \end{align}
  Combining with~\eqref{1-04-1}, we obtain~\eqref{scheme-continuous-4}. 
\end{proof}

\subsection{Stability analysis}\label{subsec:CIP-stability}

This subsection is devoted to discussion about the stability of the CIP scheme described in Lemma~\ref{lem:scheme-continuous}.

We use the Lebesgue space $ L^p(\T) $, the Sobolev space $ H^m(\T) $ and $ W^{m, \infty}(\T) $, and denote their norms by $ \norm{L^p(\T)}{\cdot} $, $ \norm{H^m(\T)}{\cdot} $ and $ \norm{W^{m, \infty}(\T)}{\cdot} $ respectively. We use the semi-norm $ \snorm{H^m(\T)}{\cdot} $ defined by 
\begin{gather}
  \snorm{H^m(\T)}{g} = \norm{L^2(\T)}{D^m g}, \quad g \in H^m(\T).  
\end{gather}

As we already mentioned in Section~\ref{sec:introduction}, one of the reason why stability analysis of the CIP scheme is difficult is that the cubic Hermite interpolation operator is unstable in $ L^p $ ($ 1 < p < \infty $) in the sense of the next proposition. 

\begin{proposition}\label{prop:unbounded}
  Suppose that the spatial grid $ \{x_j\}_{j=0}^{M-1} $ is fixed. For $ p \in (1,\infty) $, the cubic Hermite interpolation operator $ \I $ is not bounded in $ L^p(\T) $, that is, 
  \begin{gather}
    \sup_{v \in C^1(\T)} \frac{\norm{L^p(\T)}{\I v}}{\norm{L^p(\T)}{v}} = + \infty. \label{interpolation-instability}
  \end{gather}
\end{proposition}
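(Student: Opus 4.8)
The plan is not to estimate the operator norm directly, but to exhibit an explicit family $\{v_\varepsilon\}_{\varepsilon>0}\subset C^1(\T)$ making the ratio in~\eqref{interpolation-instability} diverge. The mechanism I would exploit is that $\I v$ is determined by the \emph{nodal values and nodal derivatives} $v(x_j), v_x(x_j)$ alone, so that two functions sharing these data have the same interpolant regardless of their behaviour between nodes. Hence I would look for functions that are tiny in $L^p$ yet carry a fixed, nonzero slope at the grid points: the interpolation ``sees'' only that slope and reconstructs an $\varepsilon$-independent, genuinely nonzero cubic, whereas the $L^p$ mass of $v_\varepsilon$ is free to vanish.

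Concretely, I would fix a profile $\chi\in C^1(\R)$ with $\supp\chi\subset(-1,1)$, $\chi(0)=0$ and $\chi'(0)=1$ (for instance $\chi(x)=x(1-x^2)^2$ on $(-1,1)$, extended by zero, which is $C^1$ since $\chi$ and $\chi'$ vanish at $\pm1$), and set
\[
  v_\varepsilon(x) = \varepsilon \sum_{j=0}^{M-1} \chi\!\left(\frac{x-x_j}{\varepsilon}\right),
\]
the sum taken periodically, with $\varepsilon$ small enough that $2\varepsilon < \min_j h_j$ so the bumps are pairwise disjoint. Then $v_\varepsilon(x_j)=0$ and $v_{\varepsilon,x}(x_j)=\chi'(0)=1$ for every $j$, so $\I v_\varepsilon$ is the \emph{same} element of $\V$ for all $\varepsilon$: on each cell $[x_j,x_{j+1}]$ it is the Hermite cubic with vanishing endpoint values and unit endpoint slopes. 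A one-line computation with the Hermite basis identifies this cubic as $h_j(2t^3-3t^2+t)$ with $t=(x-x_j)/h_j$, which is not identically zero; hence $\norm{L^p(\T)}{\I v_\varepsilon}=c_0$ for a fixed constant $c_0>0$ independent of $\varepsilon$. On the other hand, rescaling each bump gives $\norm{L^p(\T)}{v_\varepsilon}^p = M\,\varepsilon^{p+1}\,\norm{L^p(\R)}{\chi}^p$, so that $\norm{L^p(\T)}{v_\varepsilon}=O(\varepsilon^{1+1/p})\to 0$ as $\varepsilon\to 0$.

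Combining the two estimates, the ratio behaves like $c_0/\norm{L^p(\T)}{v_\varepsilon}\sim \varepsilon^{-(1+1/p)}\to+\infty$, which establishes~\eqref{interpolation-instability}. I expect the only genuinely substantive point to be the verification that $\I v_\varepsilon\neq 0$, i.e.\ that the interpolant built from purely nodal-slope data does not collapse to zero; this is exactly where the structure of the Hermite basis enters, and it is settled by the explicit cubic above (note that $2t^3-3t^2+t=t(2t-1)(t-1)$ vanishes only at isolated points, so it has positive $L^p$ mass for every $p\in(1,\infty)$). The remaining items—$C^1$-regularity and periodicity of $v_\varepsilon$, disjointness of the bumps, and the scaling of the $L^p$ norm—are routine once the profile $\chi$ is fixed.
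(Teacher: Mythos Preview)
Your argument is correct. Both your proof and the paper's rest on the same core observation you stated explicitly: $\I v$ depends only on the nodal data $(v(x_j),v_x(x_j))$, so one can freeze that data and drive $\norm{L^p(\T)}{v}$ to zero. The specific constructions, however, differ. You fix $(v(x_j),v_x(x_j))=(0,1)$ via rescaled compactly supported bumps, so that $\I v_\varepsilon$ is a fixed nonzero piecewise cubic and $\norm{L^p(\T)}{v_\varepsilon}=O(\varepsilon^{1+1/p})$ by an explicit change of variables. The paper instead takes a $C^1$ function $v$ with $(v(x_j),v_x(x_j))=(1,0)$ and $0\le v<1$ off the nodes, then sets $v_n=v^n$; this yields $\I v_n\equiv 1$ for every $n$, while $v_n\to 0$ a.e.\ and $|v_n|\le 1$, so $\norm{L^p(\T)}{v_n}\to 0$ by dominated convergence. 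Your route gives a quantitative blow-up rate and makes the Hermite basis calculation explicit; the paper's route is slightly shorter, avoids any computation of the interpolant (since it is simply the constant $1$), and trades the scaling argument for a soft limit via the bounded convergence theorem.
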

\begin{proof}
  Let $ v \in C^1(\T) $ satisfy 
  \begin{gather}
    v(x_j) = 1, \quad D v(x_j) = 0 \quad (j = 0,\ldots, M-1), \\
    0 \leq v(x) < 1 \quad (x \not \in \{x_0, x_1 \ldots, x_{M-1}\}), 
  \end{gather}
  and $ v_n(x) = v(x)^n $ for $ n \in \{1,2,\ldots\} $. 
  Since $ v_n(x_j) = 1 $ and $ D v_n(x_j) = 0 $ for any $ j $, we have $ \I v_n \equiv 1 $ and $ \norm{L^p(\T)}{\I v_n} = 1 $ for any $ n $. 
  On the other hand, since $ v_n \to 0 $ almost everywhere on $ \T $ and $ | v_n(x) | \leq 1 $ for $ x \in \T $, we have $ \norm{L^p(\T)}{v_n} \to 0 $ as $ n \to \infty $ by the bounded convergence theorem. 
  Therefore we obtain~\eqref{interpolation-instability}. 
\end{proof}

Thus, we introduce another norm depending on the mesh size 
\begin{gather}
  \norm{H_{h,\Delta t}^2(\T)}{g}
  = \left( \norm{L^2(\T)}{g}^2 + \frac{h^4}{\Delta t} \snorm{H^2(\T)}{g}^2 \right)^{1/2}, \label{define-weighted-H2-norm}
\end{gather}
which we call weighted $ H^2 $ norm in this paper. 
As we will see in Lemma~\ref{lem:stability-hermite-interpolation}, the cubic Hermite interpolation operator is stable with respect to the weighted $ H^2 $ norm. 

Now we prepare two assumptions: 

\begin{assumption}\label{ass:H2}
  There exists a positive constant $ C_1 $ such that $ h^4 \leq C_1 \Delta t $. 
\end{assumption}

\begin{assumption}\label{ass:time}
  The temporal mesh is quasi-uniform, that is, there exists a positive constant $ C_2 $ such that $ \Delta t \leq C_2 \Delta t^\prime $. Here we recall that $ \Delta t = \max_{0 \leq n \leq N-1} \Delta t_n $ and $\Delta t^\prime = \min_{0 \leq n \leq N-1} \Delta t_n $. 
\end{assumption}

Our aim in this subsection is to prove the following lemma concerning the stability of the CIP scheme, whose proof will be addressed in the end of this subsection. 

\begin{lemma}\label{lem:stability-CIP}
  Suppose that $ u_{xx} \in C(\T) $. Under Assumptions~\ref{ass:H2} and~\ref{ass:time}, the CIP scheme is stable with respect to the weighted $ H^2 $ norm, that is, there exists a positive constant $ C $ such that for any $ g \in H^2(\T) $ 
  \begin{align}
    \norm{H_{h,\Delta t}^2(\T)}{\I (g \circ X^n)} \leq (1 + C \Delta t_n) \norm{H_{h,\Delta t}^2(\T)}{g}
  \end{align}
  when $ \Delta t $ is sufficiently small. 
\end{lemma}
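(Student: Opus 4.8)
The plan is to estimate the two pieces of the weighted $H^2$ norm of $\I(g \circ X^n)$ separately, relying crucially on the fact (announced in the abstract and to be established in Lemma~\ref{lem:stability-hermite-interpolation}) that $\I$ acts as the $L^2$ projection on second derivatives, so that $\snorm{H^2(\T)}{\I w} \le \snorm{H^2(\T)}{w}$ for suitable $w$. Accordingly I would first reduce the claim to the two inequalities
\begin{align}
  \norm{L^2(\T)}{\I(g \circ X^n)} &\le (1 + C\Delta t_n)\,\norm{H_{h,\Delta t}^2(\T)}{g}, \\
  \frac{h^2}{\Delta t^{1/2}}\,\snorm{H^2(\T)}{\I(g \circ X^n)} &\le (1 + C\Delta t_n)\,\norm{H_{h,\Delta t}^2(\T)}{g}.
\end{align}

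For the second-derivative term I would invoke the $L^2$-projection property to bound $\snorm{H^2(\T)}{\I(g \circ X^n)}$ by $\snorm{H^2(\T)}{g \circ X^n}$, then differentiate the composition twice via the chain rule, obtaining $D^2(g \circ X^n) = (g'' \circ X^n)(X_x^n)^2 + (g' \circ X^n)X_{xx}^n$. The key analytic input here is that $X^n$ is a near-identity diffeomorphism: from~\eqref{RK0} one reads off $X^n(x) = x - \Delta t_n\,u(x,\cdot) + O(\Delta t_n^2)$, so $X_x^n = 1 + O(\Delta t_n)$ and $X_{xx}^n = O(\Delta t_n)$, where the constants depend on $\norm{W^{2,\infty}}{u}$ (this is where the hypothesis $u_{xx} \in C(\T)$ enters). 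A change of variables $y = X^n(x)$ then converts the $L^2$ norm over $\T$ back to one weighted by $(X_x^n)^{-1} = 1 + O(\Delta t_n)$, yielding $\snorm{H^2(\T)}{g \circ X^n} \le (1 + C\Delta t_n)\snorm{H^2(\T)}{g}$ plus a lower-order term $C\Delta t_n \snorm{H^1(\T)}{g}$ coming from the $X_{xx}^n$ contribution. Multiplying by $h^2/\Delta t^{1/2}$ and using $\snorm{H^1}{g} \le \norm{L^2}{g}^{1/2}\snorm{H^2}{g}^{1/2}$ (interpolation) together with Assumption~\ref{ass:H2} to absorb the stray factors should close this estimate.

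The harder piece is the $L^2$ bound, precisely because $\I$ is \emph{not} $L^2$-bounded (Proposition~\ref{prop:unbounded}). The strategy I would adopt is to write $\I(g \circ X^n) = g \circ X^n + (\I - \mathrm{id})(g \circ X^n)$ and control the interpolation error $(\I - \mathrm{id})(g \circ X^n)$ in $L^2$ by a standard Hermite estimate of the form $\norm{L^2}{(\I - \mathrm{id})w} \le C h^2 \snorm{H^2(\T)}{w}$. The first summand gives $\norm{L^2}{g \circ X^n} \le (1 + C\Delta t_n)\norm{L^2}{g}$ by the same change of variables. The error summand is bounded by $Ch^2 \snorm{H^2}{g \circ X^n} \le Ch^2(1 + C\Delta t_n)\snorm{H^2}{g}$; the factor $h^2$ is exactly matched against the weight $h^4/\Delta t$ in the norm since $h^2 = (h^4/\Delta t)^{1/2}\Delta t^{1/2}$, so under Assumption~\ref{ass:H2} (or simply using $\Delta t < 1$) this term is absorbed into $\frac{h^2}{\Delta t^{1/2}}\snorm{H^2}{g} \le \norm{H_{h,\Delta t}^2}{g}$ up to a constant. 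Here Assumption~\ref{ass:time} lets me replace the local $\Delta t_n$ by the global $\Delta t$ in the weight without losing uniformity of constants.

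The main obstacle I anticipate is the bookkeeping in the second-derivative chain-rule term: the $X_{xx}^n$ contribution produces a genuine loss of one derivative (an $H^1$ quantity where an $H^2$ bound is wanted), and it must be reabsorbed using the $h^4/\Delta t$ weighting rather than brute force. Getting the powers of $h$ and $\Delta t$ to balance so that this lower-order term contributes only $O(\Delta t_n)$ to the multiplicative constant — and does not degrade the norm equivalence — is the delicate point, and it is exactly where Assumption~\ref{ass:H2} is indispensable. A secondary technical issue is verifying that the remainder estimates for $X^n$, $X_x^n$, $X_{xx}^n$ coming from the explicit Runge--Kutta formula~\eqref{RK0} hold uniformly in $n$ with constants depending only on $\norm{W^{2,\infty}}{u}$; this is routine Taylor expansion in $\Delta t_n$ but should be stated carefully since it underlies every estimate above.
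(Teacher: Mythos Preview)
Your overall plan---treat $X^n$ as a $C^2$ near-identity diffeomorphism, then handle the $L^2$ and weighted $H^2$-seminorm contributions---is exactly the paper's strategy, but the way you assemble the two pieces has a genuine gap that prevents the estimate from closing with the required constant $(1+C\Delta t_n)$.

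First, the reduction itself is flawed: from the two displayed inequalities (each piece bounded by $(1+C\Delta t_n)\norm{H^2_{h,\Delta t}(\T)}{g}$) you only get, after squaring and adding, $\norm{H^2_{h,\Delta t}(\T)}{\I(g\circ X^n)}\le \sqrt{2}\,(1+C\Delta t_n)\norm{H^2_{h,\Delta t}(\T)}{g}$, and the $\sqrt{2}$ is fatal under $N\sim 1/\Delta t$ iterations. Second, and more fundamentally, your $L^2$ bound does not actually produce $(1+C\Delta t_n)$: the interpolation-error summand satisfies $\norm{L^2(\T)}{(\I-I)(g\circ X^n)}\le Ch^2\snorm{H^2(\T)}{g\circ X^n}$, and since $h^2=\Delta t^{1/2}\cdot h^2/\Delta t^{1/2}$ this contributes a term of size $C\Delta t^{1/2}\cdot\frac{h^2}{\Delta t^{1/2}}\snorm{H^2(\T)}{g}$, i.e.\ the prefactor is $\Delta t^{1/2}$, not $\Delta t_n$. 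You end up with $(1+C\Delta t_n^{1/2})$, which again blows up when iterated.

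The missing ingredient is that the projection property yields more than the inequality $\snorm{H^2(\T)}{\I w}\le\snorm{H^2(\T)}{w}$: it gives the Pythagorean identity $\snorm{H^2(\T)}{w-\I w}^2+\snorm{H^2(\T)}{\I w}^2=\snorm{H^2(\T)}{w}^2$ (Lemma~\ref{lem:int-2}). The paper works at the level of squares and uses this identity so that the interpolation-error contribution in the $L^2$ part, which via~\eqref{interpolation-error-4} is $\le Ch^4(1+1/\Delta t)\snorm{H^2(\T)}{w-\I w}^2$, \emph{adds} to the already-present $\frac{h^4}{\Delta t}\snorm{H^2(\T)}{\I w}^2$ and reconstitutes exactly $(1+C\Delta t)\frac{h^4}{\Delta t}\snorm{H^2(\T)}{w}^2$ with no overshoot. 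This is Lemma~\ref{lem:stability-hermite-interpolation}; once it is in hand, composing with your (correct) diffeomorphism estimate for $g\mapsto g\circ X^n$ (Lemma~\ref{lem:stability-advection}) gives the result in one line.
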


We remark that a numerical solution $ \varphi_h^n $ is in $ H^2(\T) $ since $ \V \subset H^2(T) $. 

The time evolution of the CIP scheme in $ [t^n, t^{n+1}] $ consists of two mappings from $ H^2(\T) $ to $ H^2(\T) $ as follows: 
\begin{gather}
  \varphi_h^n \mapsto \varphi_h^n \circ X^n \mapsto \I \left(\varphi_h^n \circ X^n\right) = \varphi_h^{n+1}. \label{two-steps}
\end{gather}
In what follows, we prove stabilities with respect to the weighted $ H^2 $ norm of these two steps. Before we consider stability of the cubic Hermite interpolation operator, we recall some of its properties. 

In the next lemma, the error estimate of the cubic Hermite interpolation is provided. 

\begin{lemma}[Theorem 2 of~\cite{GBMSRV68}, Corollary 3.1 of~\cite{JGTHJL06}]\label{lem:int-1} 
  There exists a positive constant $ C $ such that 
  \begin{align}
    \norm{L^2(\T)}{g - \I g} &\leq C h^4 \snorm{H^4(\T)}{g}, & g &\in H^4(\T), \label{interpolation-error-1} \\ 
    \snorm{H^2(\T)}{g - \I g} &\leq C h^2 \snorm{H^4(\T)}{g}, & g & \in H^4(\T), \label{interpolation-error-2}\\ 
    \norm{L^2(\T)}{g - \I g} &\leq C h^2 \snorm{H^2(\T)}{g - \I g}, & g & \in H^2(\T). \label{interpolation-error-4}
  \end{align}
\end{lemma}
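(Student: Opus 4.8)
The plan is to prove all three estimates by the standard localize--scale--estimate argument, exploiting that $\I$ is a \emph{local} operator: on each interval $[x_j,x_{j+1}]$ the function $\I g$ depends only on $g(x_j),g(x_{j+1}),g'(x_j),g'(x_{j+1})$. Hence $\norm{L^2(\T)}{g-\I g}^2=\sum_j \norm{L^2([x_j,x_{j+1}])}{g-\I g}^2$, and likewise for the $H^2$ seminorm, so it suffices to treat one interval. First I would pull each interval back to the reference interval $[0,1]$ via $\hat g(\hat x)=g(x_j+h_j\hat x)$, under which the norms scale as $\norm{L^2([x_j,x_{j+1}])}{f}^2=h_j\norm{L^2([0,1])}{\hat f}^2$ and $\snorm{H^k([x_j,x_{j+1}])}{f}^2=h_j^{1-2k}\snorm{H^k([0,1])}{\hat f}^2$. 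The decisive structural fact is that Hermite interpolation commutes with affine rescaling, i.e.\ $\widehat{\I g}=\hat I\hat g$ where $\hat I$ is the reference interpolation on $[0,1]$; this keeps all constants uniform in $j$.

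For~\eqref{interpolation-error-1} and~\eqref{interpolation-error-2} I would apply the Bramble--Hilbert lemma on $[0,1]$. Since $\hat I$ reproduces $\mathbb{P}_3([0,1])$ exactly and is bounded from $H^4([0,1])\hookrightarrow C^3([0,1])$ (a one--dimensional Sobolev embedding, which controls the four nodal data) into $L^2([0,1])$ and into $H^2([0,1])$, the operator $T=I-\hat I$ vanishes on cubics; writing $T\hat g=T(\hat g-p)$ and optimizing over $p\in\mathbb{P}_3$ yields $\norm{L^2([0,1])}{\hat g-\hat I\hat g}\le C\snorm{H^4([0,1])}{\hat g}$ and $\snorm{H^2([0,1])}{\hat g-\hat I\hat g}\le C\snorm{H^4([0,1])}{\hat g}$. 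Inserting the scaling relations (the $H^4$ side carries $h_j^{7}$, the left sides $h_j$ resp.\ $h_j^{-3}$) produces the factors $h_j^4$ and $h_j^2$; squaring, summing over $j$, and bounding $h_j\le h$ gives the two global estimates.

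For~\eqref{interpolation-error-4}, which assumes only $g\in H^2(\T)$, the key observation is that $e:=g-\I g$ vanishes together with its first derivative at every grid point, so on the reference interval $\hat e(0)=\hat e(1)=\hat e'(0)=\hat e'(1)=0$. I would then use a Poincar\'e inequality twice: first applied to $\hat e'$ (using $\hat e'(0)=0$) to get $\norm{L^2([0,1])}{\hat e'}\le C\norm{L^2([0,1])}{\hat e''}$, then to $\hat e$ (using $\hat e(0)=0$) to get $\norm{L^2([0,1])}{\hat e}\le C\norm{L^2([0,1])}{\hat e'}\le C\snorm{H^2([0,1])}{\hat e}$. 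Rescaling yields $\norm{L^2([x_j,x_{j+1}])}{e}\le C h_j^2\snorm{H^2([x_j,x_{j+1}])}{e}$, and summation over $j$ gives~\eqref{interpolation-error-4}.

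In one dimension the result is classical, so there is no deep obstacle; the care lies in (i) verifying the reference-level boundedness of $\hat I$ via the embedding $H^4\hookrightarrow C^3$, and (ii) bookkeeping the scaling exponents so the powers of $h_j$ come out correctly with $j$-independent constants. I would expect~\eqref{interpolation-error-4} to be the conceptually most important of the three for the paper: it requires no regularity beyond $H^2$ and rests entirely on the first-order vanishing of the interpolation error at the nodes, which is exactly the property the authors will need to close the weighted-$H^2$ stability estimate of Lemma~\ref{lem:stability-CIP}.
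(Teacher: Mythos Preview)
Your proof is correct and complete. The paper does not supply its own proof of this lemma: it simply records the three estimates and cites Theorem~2 of Birkhoff--Schultz--Varga and Corollary~3.1 of Goodrich--Hagstrom--Lorenz, so there is nothing to compare against beyond noting that the localize--scale--Bramble--Hilbert argument you outline (together with the iterated Poincar\'e step for~\eqref{interpolation-error-4}) is precisely the classical route those references follow. Your meta-remark is also on target, except that~\eqref{interpolation-error-4} is invoked in Lemma~\ref{lem:stability-hermite-interpolation} rather than directly in Lemma~\ref{lem:stability-CIP}.
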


The following lemma states that the cubic Hermite interpolation operator behaves as the $ L^2 $ projection for second-order derivatives. 

\begin{lemma}[Lemma 3.2 of~\cite{JGTHJL06}]\label{lem:int-2} 
  For any $ f,g \in H^2(\T) $, the following equality holds: 
  \begin{align}
    \int_\T D^2 \left( f(x) - \I f(x)\right) \cdot D^2 \left(\I g\right)(x) dx = 0,\label{1-58} 
  \end{align}
  which implies 
  \begin{align}
    \snorm{H^2(\T)}{f -  \I f + \I g}^2 = \snorm{H^2(\T)}{f -  \I f}^2 + \snorm{S^2(\T)}{\I g}^2. \label{interpolation-projection-2}
  \end{align}  
\end{lemma}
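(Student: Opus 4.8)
The plan is to reduce the stated norm identity~\eqref{interpolation-projection-2} to the orthogonality relation~\eqref{1-58}, and then to establish the latter by an interval-wise integration by parts. First I would set $ e \coloneqq f - \I f $ and record the two structural facts that drive the argument: by the defining conditions~\eqref{1-59} of the Hermite interpolant, the error satisfies $ e(x_j) = 0 $ and $ D e(x_j) = 0 $ at every grid point, while $ \I g \in \V $ is piecewise cubic, so that on each cell $ [x_j, x_{j+1}] $ its third derivative $ D^3(\I g) $ is constant.

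Next I would prove~\eqref{1-58} by splitting the integral over $ \T $ into the $ M $ subintervals and integrating by parts once on each. Since $ e|_{[x_j,x_{j+1}]} \in H^2([x_j,x_{j+1}]) \hookrightarrow C^1([x_j,x_{j+1}]) $ and $ \I g $ is a polynomial there, the identity
\begin{align}
  \int_{x_j}^{x_{j+1}} D^2 e \cdot D^2(\I g)\, dx
  = \Big[ D e \cdot D^2 (\I g) \Big]_{x_j}^{x_{j+1}}
  - \int_{x_j}^{x_{j+1}} D e \cdot D^3(\I g)\, dx
\end{align}
is legitimate. The boundary term vanishes because $ D e $ is zero at both endpoints, and in the remaining integral the constant $ D^3(\I g) $ factors out, leaving a multiple of $ \int_{x_j}^{x_{j+1}} D e\, dx = e(x_{j+1}) - e(x_j) = 0 $. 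Summing over $ j = 0, \dots, M-1 $ yields~\eqref{1-58}.

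Finally, I would derive the norm identity by expanding the square. Using $ \snorm{H^2(\T)}{v}^2 = \norm{L^2(\T)}{D^2 v}^2 $ and bilinearity of the $ L^2 $ inner product,
\begin{align}
  \snorm{H^2(\T)}{e + \I g}^2
  = \snorm{H^2(\T)}{e}^2
  + 2 \int_\T D^2 e \cdot D^2(\I g)\, dx
  + \snorm{H^2(\T)}{\I g}^2,
\end{align}
and the cross term drops out by~\eqref{1-58}, giving~\eqref{interpolation-projection-2} (reading the symbol $ S^2 $ there as $ H^2 $).

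I do not anticipate a genuine obstacle: the whole argument rests on the two facts noted at the outset — the vanishing of $ e $ and $ D e $ at the nodes and the constancy of $ D^3(\I g) $ on each cell. The only point requiring care is that the pointwise integration by parts on each cell is valid, which is ensured by the one-dimensional embedding $ H^2 \hookrightarrow C^1 $ so that the nodal boundary values of $ e $ and $ D e $ are well defined, even though $ D^2(\I g) $ may jump across the nodes.
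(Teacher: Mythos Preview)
Your argument is correct: the integration-by-parts on each cell, using $e(x_j)=De(x_j)=0$ and the constancy of $D^3(\I g)$ on each subinterval, is exactly the standard route to~\eqref{1-58}, and the norm identity follows immediately by expanding the square. The paper does not reproduce a proof but simply cites Lemma~3.2 of~\cite{JGTHJL06}; your proof is the same argument given there.
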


Using these two lemmas, we can deduce the stability of the second mapping in~\eqref{two-steps}, namely, the stability of the cubic Hermite interpolation operator with respect to the weighted $ H^2 $ norm. 

\begin{lemma}\label{lem:stability-hermite-interpolation} 
  There exists a positive constant $ C $ such that 
  \begin{gather}
      \norm{H_{h,\Delta t}^2(\T)}{\I g} \leq (1 + C \Delta t) \norm{H_{h,\Delta t}^2(\T)}{g}, \quad g \in H^2(\T). 
     \label{stb-1-1}  
  \end{gather}
  Furthermore, under Assumption~\ref{ass:time}, there exists a positive constant $ C $ such that
  \begin{gather}
      \norm{H_{h,\Delta t}^2(\T)}{\I g} \leq (1 + C \Delta t_n) \norm{H_{h,\Delta t}^2(\T)}{g}, \quad g \in H^2(\T). 
     \label{stb-1-2}  
  \end{gather}
\end{lemma}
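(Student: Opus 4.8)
The plan is to estimate the two pieces of $ \norm{H_{h,\Delta t}^2(\T)}{\I g}^2 = \norm{L^2(\T)}{\I g}^2 + \frac{h^4}{\Delta t}\snorm{H^2(\T)}{\I g}^2 $ separately, handling the $ L^2 $ part through the interpolation error bound \eqref{interpolation-error-4} and the $ H^2 $ seminorm through the orthogonality of Lemma~\ref{lem:int-2}, and then to combine them in such a way that a dangerous cross term is absorbed. For the seminorm I would apply \eqref{interpolation-projection-2} (equivalently \eqref{1-58}) with $ f = g $, which gives the Pythagorean identity $ \snorm{H^2(\T)}{g}^2 = \snorm{H^2(\T)}{g - \I g}^2 + \snorm{H^2(\T)}{\I g}^2 $; in particular $ \snorm{H^2(\T)}{\I g}^2 = \snorm{H^2(\T)}{g}^2 - \snorm{H^2(\T)}{g - \I g}^2 $. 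The crucial observation is that this produces a genuinely \emph{negative} contribution $ -\frac{h^4}{\Delta t}\snorm{H^2(\T)}{g - \I g}^2 $ in the weighted norm, of the relatively large size $ \frac{h^4}{\Delta t} $, which I will use to pay for the cross term below.

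For the $ L^2 $ part I would use the triangle inequality $ \norm{L^2(\T)}{\I g} \le \norm{L^2(\T)}{g} + \norm{L^2(\T)}{g - \I g} $ together with \eqref{interpolation-error-4}, namely $ \norm{L^2(\T)}{g - \I g} \le C_0 h^2 \snorm{H^2(\T)}{g - \I g} $. Writing $ A = \norm{L^2(\T)}{g} $ and $ f = \snorm{H^2(\T)}{g - \I g} $ and squaring gives $ \norm{L^2(\T)}{\I g}^2 \le A^2 + 2C_0 h^2 A f + C_0^2 h^4 f^2 $. Adding the seminorm contribution and using $ h^4 = \Delta t \cdot \frac{h^4}{\Delta t} $, the whole weighted norm becomes
\begin{align}
  \norm{H_{h,\Delta t}^2(\T)}{\I g}^2
  &\le \norm{H_{h,\Delta t}^2(\T)}{g}^2
  + 2C_0 h^2 A f + \frac{h^4}{\Delta t}\left(C_0^2 \Delta t - 1\right) f^2 ,
\end{align}
since $ A^2 + \frac{h^4}{\Delta t}\snorm{H^2(\T)}{g}^2 $ is exactly $ \norm{H_{h,\Delta t}^2(\T)}{g}^2 $. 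It therefore remains to control the last two terms.

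The main point is to treat $ 2C_0 h^2 A f + \frac{h^4}{\Delta t}(C_0^2 \Delta t - 1) f^2 $ as a quadratic in $ f $. For $ \Delta t $ small enough that $ C_0^2 \Delta t < 1 $, the leading coefficient is negative, so the quadratic is bounded above by its maximum over $ f \in \R $, which a direct computation shows equals $ \frac{C_0^2 \Delta t}{1 - C_0^2 \Delta t} A^2 $. Hence $ \norm{H_{h,\Delta t}^2(\T)}{\I g}^2 \le \frac{1}{1 - C_0^2 \Delta t} A^2 + \frac{h^4}{\Delta t}\snorm{H^2(\T)}{g}^2 \le \frac{1}{1 - C_0^2 \Delta t} \norm{H_{h,\Delta t}^2(\T)}{g}^2 $, and taking square roots together with $ \frac{1}{1 - C_0^2 \Delta t} \le 1 + C \Delta t $ for small $ \Delta t $ yields \eqref{stb-1-1}. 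Finally, \eqref{stb-1-2} follows immediately from \eqref{stb-1-1}: under Assumption~\ref{ass:time} we have $ \Delta t \le C_2 \Delta t^\prime \le C_2 \Delta t_n $, so $ 1 + C \Delta t \le 1 + C C_2 \Delta t_n $.

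I expect the only real obstacle to be the cross term $ 2 C_0 h^2 A f $. Estimating it by Young's inequality \emph{without} exploiting the negative seminorm term would at best balance it symmetrically and leave a factor $ 1 + O(\sqrt{\Delta t}) $ rather than $ 1 + O(\Delta t) $, since $ h^2 = \sqrt{\Delta t}\,\sqrt{h^4/\Delta t} $ scales like $ \sqrt{\Delta t} $; this is insufficient. The decisive step is that the orthogonality of Lemma~\ref{lem:int-2} supplies a negative term of order $ \frac{h^4}{\Delta t} $ that dominates both the positive term $ C_0^2 h^4 f^2 $ coming from the square and the cross term, leaving only a harmless $ O(\Delta t)\norm{L^2(\T)}{g}^2 $ perturbation. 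I would double-check that smallness of $ \Delta t $ enters at exactly one place, namely to ensure $ C_0^2 \Delta t < 1 $, which is consistent with the hypotheses used later in Lemma~\ref{lem:stability-CIP}.
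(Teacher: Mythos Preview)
Your proof is correct and uses the same two ingredients as the paper --- the interpolation bound \eqref{interpolation-error-4} and the Pythagorean identity from Lemma~\ref{lem:int-2} --- with only a cosmetic difference in organization: the paper applies the weighted Young inequality \eqref{tri} with parameter $d=\Delta t$ to the $L^2$ part and then uses $\snorm{H^2(\T)}{\I g - g}^2 + \snorm{H^2(\T)}{\I g}^2 = \snorm{H^2(\T)}{g}^2$ to recombine, whereas you first extract the negative term $-\frac{h^4}{\Delta t}\snorm{H^2(\T)}{g-\I g}^2$ from Pythagoras and then optimize the resulting quadratic in $f$; these two manipulations are algebraically equivalent. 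The only substantive difference is that your optimization step needs $C_0^2\Delta t<1$, while the paper's arrangement (with the Young parameter chosen proportional to $\Delta t$) yields \eqref{stb-1-1} for all $\Delta t$; this is harmless here since the lemma is only invoked for small $\Delta t$ anyway.
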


\begin{proof}
  The triangle inequality and the inequality of arithmetic and geometric means give 
  \begin{gather}
    \left(a + b\right)^2 \leq (1 + d)a^2 + \left(1 + \frac{1}{d}\right)b^2, \quad a,b \in \R, \quad d > 0. 
   \label{tri}
  \end{gather}
  By the definition of the weighted $ H^2 $ norm and~\eqref{tri}, we have 
  \begin{align}
    \norm{H_{h,\Delta t}^2(\T)}{\I g}^2 
    & = \norm{L^2(\T)}{g + (\I g - g)}^2 + \frac{h^4}{\Delta t} \snorm{H^2(\T)}{\I g}^2 \\
    & \leq (1 + \Delta t) \norm{L^2(\T)}{g}^2 
    + \left(1 + \frac{1}{\Delta t} \right) \norm{L^2(\T)}{\I g - g}^2 
    + \frac{h^4}{\Delta t} \snorm{H^2(\T)}{\I g}^2. 
  \end{align}
  Using~\eqref{interpolation-error-4}, we have 
  \begin{align}
    & \norm{H_{h,\Delta t}^2(\T)}{\I g}^2 \\
    & \leq (1 + \Delta t) \norm{L^2(\T)}{\I g}^2 + h^4 \left( 1 + \frac{1}{\Delta t} \right) \snorm{H^2(\T)}{\I g - g}^2
    + \frac{h^4}{\Delta t} \snorm{H^2(\T)}{\I g}^2 \\
    & \leq (1 + \Delta t) \norm{L^2(\T)}{\I g}^2 + h^4 \left( 1 + \frac{1}{\Delta t} \right) 
    \left(\snorm{H^2(\T)}{\I g - g}^2 + \snorm{H^2(\T)}{\I g}^2 \right). 
  \end{align}
  Using~\eqref{interpolation-projection-2}, we have 
  \begin{align}
    \norm{H_{h,\Delta t}^2(\T)}{\I g}^2 
    & \leq (1 + \Delta t) \norm{L^2(\T)}{\I g}^2 + (1 + \Delta t) \frac{h^4}{\Delta t} \snorm{H^2(\T)}{g}^2 \\
    & = (1 + \Delta t) \norm{H_{h,\Delta t}^2(\T)}{g}^2. 
  \end{align}
  If we suppose the time steps are quasi-uniform,\ \eqref{stb-1-2} follows from~\eqref{stb-1-1}. 
\end{proof}

In order to prove the stability of the first mapping in~\eqref{two-steps}, we prepare some lemmas. 

\begin{lemma}
  If $ u_{xx} \in C(\T \times [0,T]) $, there exists a positive constant $ C $ such that 
  \begin{gather}
    \norm{W^{2,\infty}}{\xi^n - X^n} \leq C \Delta t_n \label{RK-error-1}
  \end{gather}
  for $ n \in \{0,1,\ldots,N-1\} $, when $ \Delta t $ is sufficiently small. 

  Moreover, if $ u_{xx} \in C^3(\T \times [0,T]) $, there exists a positive constant $ C $ such that 
  \begin{gather}
    \norm{W^{2,\infty}}{\xi^n - X^n} \leq C (\Delta t_n)^4 \label{RK-error-2}
  \end{gather}
  for $ n \in \{0,1,\ldots,N-1\} $, when $ \Delta t $ is sufficiently small. 
\end{lemma}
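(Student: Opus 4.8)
The plan is to identify $X^n$ as a single step of Kutta's classical third-order Runge--Kutta method applied to the characteristic ODE~\eqref{xi-ode} integrated backward from $t^{n+1}$ with signed step $-\Delta t_n$: substituting the right-hand side $u(y,s)$, base point $s_0 = t^{n+1}$ and step $-\Delta t_n$ into the standard tableau with nodes $(0,\tfrac12,1)$ reproduces~\eqref{RK0} line by line. Since $\xi^n(x) = \xi(t^n;x,t^{n+1})$ is the exact flow value, both $\xi^n$ and $X^n$ are smooth functions of $x$ and of the step $\tau \coloneqq \Delta t_n$, and the two estimates are local-truncation-error bounds for one step, but measured in $W^{2,\infty}$ in the spatial variable rather than pointwise.

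For the crude bound~\eqref{RK-error-1} I would show that each of $\xi^n$ and $X^n$ differs from the identity map $\mathrm{id}$ by $O(\Delta t_n)$ in $W^{2,\infty}$ and then apply the triangle inequality. For $\xi^n$, the value obeys $\xi^n(x)-x = -\int_{t^n}^{t^{n+1}} u(\xi,\tau)\,d\tau = O(\Delta t_n)$; the first derivative $D\xi^n - 1 = \xi_x^n - 1$ is $O(\Delta t_n)$ by the explicit formula~\eqref{xix-exp} and $\abs{e^z-1}\le \abs{z}e^{\abs{z}}$; and $\xi_{xx}^n$ solves the variational equation obtained by differentiating~\eqref{xix-ode} once more in $x$, which has zero initial data and forcing bounded in terms of $\norm{L^\infty}{u_{xx}}$, so Gr\"onwall over an interval of length $\Delta t_n$ gives $\xi_{xx}^n = O(\Delta t_n)$. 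The analogous bounds for $X^n - \mathrm{id}$ follow by differentiating~\eqref{RK0} twice in $x$: each $x$-derivative of $k_1,k_2,k_3$ through second order involves $u$ only up to $u_{xx}$, while the prefactor $\Delta t_n$ supplies the required power. Thus $u_{xx}\in C(\T\times[0,T])$, which is bounded on the compact domain, is exactly what is needed.

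For the sharp bound~\eqref{RK-error-2} I would perform the consistency analysis of the one-step method. Writing $E(x,\tau) \coloneqq \xi^n(x)-X^n(x)$ as a function of $x$ and of the step $\tau = \Delta t_n$, the hypothesis $u_{xx}\in C^3(\T\times[0,T])$ supplies continuous derivatives of $u$ up to the orders used below and makes $\tau\mapsto D_x^k E(x,\tau)$ of class $C^4$ for $k=0,1,2$, with all derivatives continuous in $(x,\tau)$ on the compact set $\T\times[0,\Delta t]$. Because the method has order $3$, the $\tau$-Taylor coefficients of $E(x,\cdot)$ at $\tau=0$ vanish up to order $3$ --- these are precisely the order conditions, all four of which this tableau satisfies --- and they vanish \emph{identically in} $x$, so their $x$-derivatives vanish as well. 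Taylor's theorem with integral remainder in $\tau$ then gives $\abs{D_x^k E(x,\tau)} \le C\tau^4$ uniformly in $x$, with $C$ controlled by $\sup \abs{\partial_\tau^4 D_x^k E}$, which is finite under the stated regularity; taking the maximum over $k=0,1,2$ yields~\eqref{RK-error-2}.

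I expect the main obstacle to be the $W^{2,\infty}$, rather than pointwise, character of the sharp estimate: one must check that differentiating twice in $x$ does not spoil the order-$4$ matching. The clean way around this is the observation just used, that the order conditions hold as identities in $x$; equivalently, one may regard the scheme as RK3 applied with the same tableau to the augmented variational systems governing $(\xi^n,\xi_x^n)$ and $\xi_{xx}^n$, so that the vanishing of the low-order $\tau$-coefficients automatically propagates to the spatial derivatives. What remains is the routine bookkeeping of how many derivatives of $u$ each differentiation costs, and the hypothesis $u_{xx}\in C^3$ is calibrated so that the two extra spatial derivatives demanded by the $W^{2,\infty}$ norm are exactly affordable.
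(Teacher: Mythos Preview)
Your proposal is correct and follows essentially the same route as the paper. The paper's proof consists precisely of your ``equivalently'' clause: it writes out the augmented variational system for $(\xi,\xi_x,\xi_{xx})$ with right-hand side $\tilde{\bm u}(\bm\xi,s)=(u,\xi_1 u_x,\xi_1^2 u_{xx}+\xi_2 u_x)$, observes that applying the same RK3 tableau to this system reproduces $(X^n,X_x^n,X_{xx}^n)$ componentwise, and then invokes the standard one-step local error bound for RK3 (citing Butcher) under the hypothesis that $\tilde{\bm u}$ is $C^3$, which is where $u_{xx}\in C^3$ enters; the crude bound~\eqref{RK-error-1} is dispatched with a ``similarly''. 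Your Taylor-in-$\tau$ formulation with order conditions vanishing identically in $x$ is just another way to phrase the same mechanism, and your treatment of~\eqref{RK-error-1} via the triangle inequality through the identity map is a perfectly acceptable (and slightly more explicit) version of what the paper's ``similarly'' would unpack to.
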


\begin{proof}
  We will give the proof of~\eqref{RK-error-2}. We can prove~\eqref{RK-error-1} similarly. 

  Let $ \tilde{\bm{y}} (x) = (x,1,0)^T $ and 
  \begin{gather}
    \tilde{\bm{u}} \left( \bm{\xi}, s \right)
    = (u(\xi_0,s), \xi_1 u_x(\xi_0,s), (\xi_1)^2 u_{xx}(\xi_0,s) + \xi_2 u_x(\xi_0,s)),
  \end{gather}
  for $ \bm{\xi} = (\xi_0,\xi_1,\xi_2) \in \T \times \R \times \R $. 
  We set 
  \begin{align}
    \tilde{\bm{k}}_{1}^{n}(x) & = \tilde{\bm{u}}(\tilde{\bm{y}}(x),t^{n+1}), \\
    \tilde{\bm{k}}_{2}^{n}(x) & = \tilde{\bm{u}}\left(\tilde{\bm{y}}(x) - \frac{\Delta t_n}{2} \tilde{\bm{k}}_{1}^{n}(x),t^{n+1} - \frac{\Delta t_n}{2} \right), \\
    \tilde{\bm{k}}_{3}^{n}(x) & = \tilde{\bm{u}}\left(\tilde{\bm{y}}(x) - \Delta t_n \left( -\tilde{\bm{k}}_{1}^{n}(x) + 2 \tilde{\bm{k}}_{2}^{n}(x) \right),t^{n+1} - \Delta t_n \right). 
  \end{align}
  Then, by~\eqref{RK0} and~\eqref{RK-conti-2}, 
  we have 
  \begin{align}
    (X^n(x), X_{x}^n(x), X_{xx}^n(x)) = \bm{}(x) - \Delta t_n \left(\frac{1}{6} \tilde{\bm{k}}_{1}^{n}(x) + \frac{4}{6} \tilde{\bm{k}}_{2}^{n}(x) + \frac{1}{6} \tilde{\bm{k}}_{3}^{n}(x) \right). 
    \label{RK-error-3}
  \end{align}
  Here the first two components of~\eqref{RK-error-3} are equivalent to~\eqref{RK-conti-2}, and the third component is the second-order derivative of~\eqref{RK0}. 
  Therefore 
  $ (X^n(x),X_x^n(x),X_{xx}^n(x)) $ is an approximation of $ (\xi^n(x),\xi_x^n(x),\xi_{xx}^n(x)) $ obtained by solving the ODE for $ \bm{\xi} : [0,T] \to \T \times \R \times \R $
  \begin{gather} 
    \begin{cases} \displaystyle
      \frac{d}{ds} \bm{\xi}(s) = \tilde{\bm{u}} (\bm{\xi}(s),s), & s \in [t^n, t^{n+1}], \\
      \bm{\xi}(t^{n+1}) = \tilde{\bm{y}}(x), 
    \end{cases}\label{ode:xabg}
  \end{gather}
  with the third-order Runge--Kutta method. 
  If $ \tilde{\bm{u}} $ is of class $ C^3 $, we have~\eqref{RK-error-2} by the standard theory of the Runge--Kutta method (cf. Section 36 of~\cite{JB87}). 
\end{proof}

\begin{lemma}\label{lem:bound-exact}
  If $ u_{xx} \in C(\T \times [0,T]) $, there exists a positive constant $ C $ such that
  \begin{gather}
    {\frac{1}{2} \leq} \frac{1}{1 + C\Delta t_n} \leq \snorm{W^{1,\infty}(\T)}{\xi^n} \leq 1 + C \Delta t_n \leq \frac{3}{2},\label{bound-exact-1} \\  
    \snorm{W^{2,\infty}(\T)}{\xi^n} \leq C \Delta t_n\label{bound-exact-2} 
  \end{gather}
  for $ n = 0,1,\ldots,N $ when $ \Delta t_n $ is sufficiently small. 
\end{lemma}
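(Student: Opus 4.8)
The plan is to read both estimates directly off the closed-form expression \eqref{xix-exp} for $\xi_x$, together with one further differentiation in $x$. Throughout, set $U_1 := \norm{L^\infty(\T \times [0,T])}{u_x}$ and $U_2 := \norm{L^\infty(\T \times [0,T])}{u_{xx}}$; both are finite since $u_{xx} \in C(\T \times [0,T])$ (hence also $u_x \in C(\T \times [0,T])$) and $\T \times [0,T]$ is compact. Recall that $\snorm{W^{m,\infty}(\T)}{g} = \norm{L^\infty(\T)}{D^m g}$, so that $\snorm{W^{1,\infty}(\T)}{\xi^n} = \norm{L^\infty(\T)}{\xi^n_x}$ and $\snorm{W^{2,\infty}(\T)}{\xi^n} = \norm{L^\infty(\T)}{\xi^n_{xx}}$, where $\xi^n_x(x) = \xi_x(t^n;x,t^{n+1})$ and $\xi^n_{xx}(x) = \xi_{xx}(t^n;x,t^{n+1})$.

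For \eqref{bound-exact-1}, I specialize \eqref{xix-exp} to $s = t^n$, $t = t^{n+1}$, which gives
\begin{gather}
  \xi^n_x(x) = \exp\left( -\int_{t^n}^{t^{n+1}} u_x(\xi(\tau;x,t^{n+1}),\tau)\,d\tau \right).
\end{gather}
The exponent is bounded in absolute value by $U_1 \Delta t_n$, so $e^{-U_1\Delta t_n} \leq \xi^n_x(x) \leq e^{U_1\Delta t_n}$ pointwise, and in particular $\xi^n_x > 0$. Taking the supremum over $x \in \T$ yields $e^{-U_1\Delta t_n} \leq \snorm{W^{1,\infty}(\T)}{\xi^n} \leq e^{U_1\Delta t_n}$. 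It then remains to invoke the elementary bound $e^y \leq 1 + Cy$, valid for $y$ in a fixed bounded interval with $C$ depending only on that interval, which gives both $e^{U_1 \Delta t_n} \leq 1 + C\Delta t_n$ and $e^{-U_1\Delta t_n} \geq (1 + C\Delta t_n)^{-1}$; choosing $\Delta t_n$ small enough that $1 + C\Delta t_n \leq 3/2$ (equivalently $(1+C\Delta t_n)^{-1} \geq 1/2$) closes the chain of inequalities in \eqref{bound-exact-1}.

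For \eqref{bound-exact-2}, I differentiate \eqref{xix-exp} once more in $x$, using $\partial_x \xi(\tau;x,t) = \xi_x(\tau;x,t)$, to obtain
\begin{gather}
  \xi_{xx}(s;x,t) = \xi_x(s;x,t) \int_t^s u_{xx}(\xi(\tau;x,t),\tau)\,\xi_x(\tau;x,t)\,d\tau.
\end{gather}
Specializing to $s = t^n$, $t = t^{n+1}$ and bounding each factor by the pointwise estimates already established (namely $\abs{\xi_x} \leq e^{U_1\Delta t_n}$ for both the prefactor and the integrand, $\abs{u_{xx}} \leq U_2$, and an integration interval of length $\Delta t_n$) gives
\begin{gather}
  \abs{\xi^n_{xx}(x)} \leq e^{U_1\Delta t_n}\cdot \Delta t_n\, U_2\, e^{U_1\Delta t_n} = U_2\, e^{2U_1\Delta t_n}\,\Delta t_n .
\end{gather}
For $\Delta t_n$ small the prefactor $U_2\, e^{2U_1\Delta t_n}$ is bounded by a constant $C$, so $\snorm{W^{2,\infty}(\T)}{\xi^n} \leq C\Delta t_n$, which is \eqref{bound-exact-2}.

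Since both estimates follow directly from the closed-form expression \eqref{xix-exp}, there is no genuine obstacle here; the only points requiring (routine) care are the finiteness of $U_1, U_2$ via compactness, the justification of differentiating under the integral sign — legitimate because $u_{xx}$ is continuous and $\xi, \xi_x$ depend smoothly on $x$ — and the elementary exponential inequalities used to pass from the sharp exponential bounds to the stated linear-in-$\Delta t_n$ form.
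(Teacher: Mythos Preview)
Your proof is correct and follows essentially the same approach as the paper: both read the bound on $\xi^n_x$ directly from the explicit formula \eqref{xix-exp} and then differentiate it once more to bound $\xi^n_{xx}$. The paper additionally writes out and bounds $\xi_{xxx}$, but that is not needed for the statement of this lemma (and indeed requires more regularity on $u$ than the hypothesis provides), so your omission of it is appropriate.
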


\begin{proof}
  By~\eqref{xix-exp}, we have 
  \begin{gather}
    \exp\left( - \norm{L^\infty(\T\times [0,T])}{u_x} \Delta t_n \right) \leq \xi_{x}^n(x) \leq \exp\left( \norm{L^\infty(\T\times [0,T])}{u_x} \Delta t_n \right), 
  \end{gather}
  from which~\eqref{bound-exact-1} follows. 
  Differentiate~\eqref{xix-exp} with respect to $ x $ to obtain 
  \begin{gather}
    \begin{split}
      \xi_{xx}(s;x,t) 
      & = \xi_x(s;x,t) \int_{t}^{s} u_{xx}(\xi(\tau;x,t),\tau) \xi_x(\tau;x,t) d\tau,\label{b-exp} 
    \end{split} \\
    \begin{split}
      & \xi_{xxx}(s;x,t) \\
      & = \xi_{xx}(s;x,t) \int_{t}^{s} u_{xx} (\xi(\tau;x,t), \tau) \xi_x(\tau; x, t) d\tau \\
      & \quad + \xi_x(s;x,t) \int_{t}^{s} \left( u_{xxx} (\xi(\tau;x,t), \tau) \xi_x(\tau; x, t)^2 + u_{xx} (\xi(\tau;x,t), \tau) \xi_{xx}(\tau; x, t) \right) d\tau. 
     \label{g-exp}
    \end{split} 
  \end{gather}
  Therefore we have 
  \begin{gather}
      |\xi_{xx}^n(x)| \leq \norm{L^\infty(\T)}{\xi_x}^2 \norm{L^\infty(\T\times [0,T])}{u_{xx}} \Delta t_n, \\
    \begin{aligned}
      |\xi_{xxx}^n(x)| \leq & \norm{L^\infty(\T)}{\xi_{xx}^n} \norm{L^\infty(\T\times [0,T])}{u_{xx}} \norm{L^\infty(\T)}{\xi_x^n} \Delta t_n \\
    & + \norm{L^\infty(\T)}{\xi_x^n} \left( \norm{L^\infty(\T\times [0,T])}{u_{xxx}} \norm{L^\infty(\T)}{\xi_x^n}^2 \right. \\
    & \left. \qquad \qquad \qquad \quad + \norm{L^\infty(\T\times [0,T])}{u_{xx}} \norm{L^\infty(\T)}{\xi_{xx}^n} \right) \Delta t_n, 
    \end{aligned}
  \end{gather}
  which implies~\eqref{bound-exact-2}. 
\end{proof}

\begin{lemma}\label{lem:bound} 
  If $ u_{xx} \in C(\T \times [0,T]) $, there exists a positive constant $ C $ such that 
  \begin{gather}
    {\frac{1}{2} \leq} \frac{1}{1 + C\Delta t_n} \leq \snorm{W^{1,\infty}(\T)}{X^n} \leq 1 + C \Delta t_n \leq \frac{3}{2}, \label{bound-1} \\  
    \snorm{W^{2,\infty}(\T)}{X^n} \leq C \Delta t_n \label{bound-2} 
  \end{gather}
  for $ n \in \{0,1,\ldots,N\} $ when $ \Delta t $ is sufficiently small. 
\end{lemma}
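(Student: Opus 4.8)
The plan is to deduce the bounds for the Runge--Kutta map $X^n$ by viewing it as a perturbation of the exact characteristic $\xi^n$, for which the corresponding estimates are already available in Lemma~\ref{lem:bound-exact}. The size of the perturbation is controlled by \eqref{RK-error-1}, which bounds $\xi^n - X^n$ in the \emph{full} $W^{2,\infty}$ norm by $C\Delta t_n$; in particular it yields the two componentwise estimates $\norm{L^\infty(\T)}{X_x^n - \xi_x^n} \leq C\Delta t_n$ and $\norm{L^\infty(\T)}{X_{xx}^n - \xi_{xx}^n} \leq C\Delta t_n$. Every inequality in \eqref{bound-1}--\eqref{bound-2} should then follow from Lemma~\ref{lem:bound-exact} by an elementary (possibly reverse) triangle inequality in $L^\infty(\T)$.

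First I would treat the second-order bound \eqref{bound-2}, which is the most direct: by the triangle inequality and \eqref{bound-exact-2},
\[
  \snorm{W^{2,\infty}(\T)}{X^n} \leq \snorm{W^{2,\infty}(\T)}{\xi^n} + \norm{L^\infty(\T)}{X_{xx}^n - \xi_{xx}^n} \leq C\Delta t_n.
\]
The upper bound in \eqref{bound-1} is obtained in the same manner from the upper bound in \eqref{bound-exact-1}, giving $\snorm{W^{1,\infty}(\T)}{X^n} \leq 1 + C\Delta t_n$, which is $\leq 3/2$ once $\Delta t$ is small enough.

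The only step that requires care is the lower bound in \eqref{bound-1}. Here I would use the reverse triangle inequality together with the lower bound in \eqref{bound-exact-1} to obtain
\[
  \snorm{W^{1,\infty}(\T)}{X^n} \geq \snorm{W^{1,\infty}(\T)}{\xi^n} - \norm{L^\infty(\T)}{X_x^n - \xi_x^n} \geq \frac{1}{1 + C\Delta t_n} - C'\Delta t_n,
\]
and then check, by an elementary estimate for $\Delta t_n$ sufficiently small (using $\frac{1}{1+C\Delta t_n} \geq 1 - C\Delta t_n$), that the right-hand side is bounded below by $\frac{1}{1 + C''\Delta t_n}$ for a suitable $C''$. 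Equivalently one may argue pointwise: from \eqref{xix-exp} we have $\xi_x^n(x) \geq \exp(-\norm{L^\infty(\T\times[0,T])}{u_x}\,\Delta t_n) \geq 1 - C\Delta t_n$, so the perturbation gives $X_x^n(x) \geq 1 - C''\Delta t_n$ for every $x$, and taking the supremum yields the claim. I expect the main (and essentially only) obstacle to be the bookkeeping of constants here, namely converting the subtractive estimate back into the multiplicative form $\frac{1}{1+C''\Delta t_n}$ and verifying that all the implicit ``$\Delta t$ sufficiently small'' thresholds can be chosen uniformly in $n$; the latter holds because the constants in Lemma~\ref{lem:bound-exact} and in \eqref{RK-error-1} depend only on norms of $u$ and not on $n$.
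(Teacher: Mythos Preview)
Your proposal is correct and follows essentially the same route as the paper: combine the bounds on the exact flow from Lemma~\ref{lem:bound-exact} with the Runge--Kutta error estimate \eqref{RK-error-1} via the (reverse) triangle inequality, and absorb the additive $C\Delta t_n$ terms back into the multiplicative form $1/(1+C\Delta t_n)$ for $\Delta t$ small. Your pointwise argument for the lower bound on $X_x^n$ is in fact the version actually used later (e.g.\ to show $X^n$ is a bijection in Lemma~\ref{lem:stability-advection}), so it is good that you spell it out rather than relying only on the seminorm inequality.
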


\begin{proof}
  By~\eqref{RK-error-1} and Lemma~\ref{lem:bound-exact}, we have 
  \begin{gather}
    \frac{1}{1 + C\Delta t_n} - C \Delta t_n^4 \leq \snorm{W^{1,\infty}(\T)}{X^n} \leq 1 + C (\Delta t_n + \Delta t_n^4), \\
    \snorm{W^{1,\infty}(\T)}{X^n} \leq C (\Delta t_n + \Delta t_n^4). 
  \end{gather}
  For sufficiently small $ \Delta t_n $, \eqref{bound-1} and~\eqref{bound-2} hold. 
\end{proof}

Now we are ready to prove the stability of the advective part of the CIP scheme. 

\begin{lemma}\label{lem:stability-advection}
  If $ u_{xx} \in C(\T \times [0,T]) $ and Assumption~\ref{ass:H2} hold, there exists a positive constant $ C $ such that for $ g \in H^2(\T) $ 
  \begin{gather}
    \norm{H_{h,\Delta t}^2(\T)}{g \circ X^n} \leq (1 + C \Delta t_n) \norm{H_{h,\Delta t}^2(\T)}{g}\label{stb-3-1} 
  \end{gather}
  when $ \Delta t_n $ is sufficiently small. 
\end{lemma}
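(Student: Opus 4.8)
The plan is to split the weighted $H^2$ norm into its two constituents, $\norm{L^2(\T)}{g \circ X^n}$ and $\snorm{H^2(\T)}{g \circ X^n}$, estimate each by a change of variables, and then recombine. Throughout I would use that $X^n$ is a $C^2$-diffeomorphism of $\T$ which is a small perturbation of the identity: Lemma~\ref{lem:bound} (together with the pointwise positivity of $\xi_x^n$ coming from Lemma~\ref{lem:bound-exact} and~\eqref{RK-error-1}) gives, for $\Delta t_n$ small, the pointwise two-sided bound $\frac{1}{1+C\Delta t_n}\le DX^n(x) \le 1+C\Delta t_n$, so that the Jacobian of the substitution $y = X^n(x)$ and that of its inverse are both $\le 1+C\Delta t_n$.

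First I would treat the $L^2$ part. The substitution $y = X^n(x)$ gives $\norm{L^2(\T)}{g \circ X^n}^2 = \int_\T |g(y)|^2 (DX^n((X^n)^{-1}(y)))^{-1}\,dy \le (1+C\Delta t_n)\norm{L^2(\T)}{g}^2$. Next, for the seminorm I would differentiate by the chain rule,
\[
  D^2(g \circ X^n) = (D^2 g \circ X^n)\,(DX^n)^2 + (Dg \circ X^n)\,D^2 X^n .
\]
The first summand is controlled exactly as above: extracting $(DX^n)^2 \le (1+C\Delta t_n)^2$ pointwise and changing variables bounds its $L^2$ norm by $(1+C\Delta t_n)\snorm{H^2(\T)}{g}$. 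The delicate summand is the second one; here I would use $\snorm{W^{2,\infty}(\T)}{X^n} \le C\Delta t_n$ from~\eqref{bound-2}, which after the same change of variables bounds its $L^2$ norm by $C\Delta t_n\,\snorm{H^1(\T)}{g}$. Combining the two contributions through~\eqref{tri} yields $\snorm{H^2(\T)}{g\circ X^n}^2 \le (1+C\Delta t_n)\snorm{H^2(\T)}{g}^2 + C\Delta t_n\,\snorm{H^1(\T)}{g}^2$.

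The main obstacle is that the last term involves $\snorm{H^1(\T)}{g}$, which the weighted $H^2$ norm does not control directly. To absorb it I would first use the periodic interpolation inequality $\snorm{H^1(\T)}{g}^2 = -\int_\T g\,D^2 g\,dx \le \norm{L^2(\T)}{g}\,\snorm{H^2(\T)}{g}$, where integration by parts produces no boundary terms on $\T$. Writing $A = \norm{L^2(\T)}{g}^2$ and $B = \frac{h^4}{\Delta t}\snorm{H^2(\T)}{g}^2$, the term $\frac{h^4}{\Delta t}\cdot C\Delta t_n\,\snorm{H^1(\T)}{g}^2$ is then $\le C\Delta t_n\sqrt{h^4/\Delta t}\,\sqrt{AB}$. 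This is precisely the point at which Assumption~\ref{ass:H2} is needed: $h^4 \le C_1\Delta t$ bounds $h^4/\Delta t$ by a constant, so Young's inequality gives $\le C\Delta t_n(A+B) = C\Delta t_n\norm{H_{h,\Delta t}^2(\T)}{g}^2$. Adding the $L^2$ estimate, the good part of the seminorm estimate (multiplied by $h^4/\Delta t$), and this term gives $\norm{H_{h,\Delta t}^2(\T)}{g\circ X^n}^2 \le (1+C\Delta t_n)\norm{H_{h,\Delta t}^2(\T)}{g}^2$, and taking square roots while absorbing the constant yields~\eqref{stb-3-1}. A minor routine point I would check is that the chain rule and the change of variables are justified for $g \in H^2(\T)$ by density, using $H^2(\T)\hookrightarrow C^1(\T)$ and $X^n\in W^{2,\infty}(\T)$.
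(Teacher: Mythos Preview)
Your argument is correct and follows essentially the same route as the paper: change of variables for the $L^2$ part, chain rule plus~\eqref{bound-1}--\eqref{bound-2} and~\eqref{tri} for the $H^2$ seminorm, and the periodic interpolation identity $\snorm{H^1(\T)}{g}^2 \le \norm{L^2(\T)}{g}\snorm{H^2(\T)}{g}$ together with Assumption~\ref{ass:H2} to absorb the $\snorm{H^1(\T)}{g}$ contribution. The only cosmetic difference is that the paper applies Young's inequality first (writing $\snorm{H^1(\T)}{g}^2 \le \tfrac12(\norm{L^2(\T)}{g}^2 + \snorm{H^2(\T)}{g}^2)$) and then invokes $h^4/\Delta t \le C_1$, whereas you keep the product form, bound $\sqrt{h^4/\Delta t}$ first, and apply Young afterwards; the two orderings are equivalent.
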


\begin{proof}
  Firstly, we consider $ \norm{L^2(\T)}{g \circ X^n} $. From~\eqref{bound-1}, we see that $ X^n: \T \to \T $ is a bijection. By a change of variables $ y = X^n(x) $, we have for $ g \in L^2(\T) $ 
  \begin{align}
    \norm{L^2(\T)}{g \circ X^n}^2
    & = \int_{\T} g(X^n(x))^2 dx \\
    & \leq \int_{\T} \frac{g(y)^2}{X_{x}^n \circ (X^n)^{-1} (y)} dy. 
  \end{align}
  By~\eqref{bound-1}, we have 
  \begin{align}
    \norm{L^2(\T)}{g \circ X^n}^2
    & \leq (1 + C \Delta t_n) \int_{\T} g(y)^2 dy \\
    & = (1 + C \Delta t_n) \norm{L^2(\T)}{g}^2.\label{stb-3-2}
  \end{align}

  Next, we consider $ \snorm{H^2(\T)}{\left( g \circ X^n \right)} $. 
  By Lemma~\ref{lem:bound}, we have
  \begin{align}
    \snorm{H^2(\T)}{g\circ X^n} 
    & = \norm{L^2(\T)}{X_{xx}^n \cdot g_x \circ X^n + (X_{x}^n)^2 \cdot g_{xx} \circ X^n} \\
    & \leq \norm{L^\infty(\T)}{X_{xx}^n} \norm{L^2(\T)}{g_x \circ X^n} + \norm{L^\infty(\T)}{X_{x}^n}^2 \norm{L^2(\T)}{g_{xx} \circ X^n} \\
    & \leq  C \Delta t_n \norm{L^2(\T)}{(g_x) \circ X^n} + (1 + C \Delta t_n) \norm{L^2(\T)}{(g_{xx}) \circ X^n}.\label{stb-3-5} 
  \end{align}
  Applying~\eqref{tri} to~\eqref{stb-3-5}, we have 
  \begin{align}
    \snorm{H^2(\T)}{g\circ X^n}^2 
    & \leq \left(1 + \frac{1}{\Delta t_n}\right) \cdot (C \Delta t_n)^2 \norm{L^2(\T)}{g_x \circ X^n}^2 \\
    & \quad + \left(1 + \Delta t_n \right) \cdot (1 + C \Delta t_n)^2 \norm{L^2(\T)}{g_{xx} \circ X^n}^2 \\
    & \leq C \Delta t_n \norm{L^2(\T)}{g_x \circ X^n}^2 + (1 + C \Delta t_n) \norm{L^2(\T)}{g_{xx} \circ X^n}^2. 
  \end{align}
  Using~\eqref{stb-3-2}, we have 
  \begin{align}
    \snorm{H^2(\T)}{g\circ X^n}^2 
    & \leq C \Delta t_n \snorm{H^1(\T)}{g}^2 + (1 + C \Delta t_n) \snorm{H^2(\T)}{g}^2. 
    \label{stb-3-6}
  \end{align}
  Integrating by parts and the Cauchy--Schwarz inequality give for $ g \in H^2(\T) $ 
  \begin{align}
    \norm{L^2(\T)}{g_x}^2 
    & = - \int_{\T} g(x) \cdot g_{xx}(x) dx 
    \leq \frac{1}{2} \left(\norm{L^2(\T)}{g}^2 + \snorm{H^2(\T)}{g}^2\right).\label{stb-2-5}
  \end{align}
   By~\eqref{stb-2-5},~\eqref{stb-3-6} becomes 
  \begin{align}
  \snorm{H^2(\T)}{g\circ X^n}^2 \leq C \Delta t_n \norm{L^2(\T)}{g}^2 + (1 + C \Delta t_n) \snorm{H^2(\T)}{g}^2. 
 \label{stb-3-4}
  \end{align}
  Combining~\eqref{stb-3-2} and~\eqref{stb-3-4}, we obtain 
  \begin{align}
    \norm{H_{h,\Delta t}^2(\T)}{g \circ X^n}^2 
    & \leq \left(1 + C ( \Delta t_n + h^4 ) \right) \norm{L^2(\T)}{g}^2 + \frac{h^4}{\Delta t} (1 + C \Delta t_n) \snorm{H^2(\T)}{g}^2.
  \end{align}
  Under Assumption~\ref{ass:H2}, we have 
  \begin{align}
    \norm{H_{h,\Delta t}^2(\T)}{g \circ X^n}^2 
    & \leq (1 + C \Delta t_n) \left(\norm{L^2(\T)}{g}^2 + \frac{h^4}{\Delta t} \snorm{H^2(\T)}{g}^2\right) \\
    & = (1 + C \Delta t_n) \norm{H_{h,\Delta t}^2(\T)}{g}^2, 
  \end{align}
  which implies the desired inequality. 
\end{proof}

Lemma~\ref{lem:stability-CIP} follows immediately from Lemmas~\ref{lem:stability-hermite-interpolation} and~\ref{lem:stability-advection}. 

\subsection{Convergence analysis}
For $ n_1,n_2 \in \{0,1,\ldots,N\} $, 
We define $ \norm{l^2(n_1,n_2;H^4)}{\cdot} $ by 
\begin{gather}
  \norm{l^2(n_1,n_2;H^4)}{g} \coloneqq \left( \sum_{n=n_1}^{n_2} \Delta t_n \| g(\cdot, t^n) \|_{H^4(\T)}^2 \right)^{1/2}
\end{gather}
for $ g \in C([0,T]; H^4(\T)) $, and denote $ \norm{l^2(0,N;H^4(\T))}{\cdot} $ by $ \norm{l^2(H^4)}{\cdot} $. 

\begin{assumption}\label{ass:phi}
  $ \varphi \in C([0,T]; H^4(\T)) $. 
\end{assumption}

We give one of the main results of this paper, which states that the CIP scheme has third-order accuracy in time and space when $ h = O(\Delta t)$. 

\begin{theorem}\label{thm:1-cnv-wH2} 
  Suppose $ u_{xx} \in C^3(\T) $. Under Assumptions~\ref{ass:H2},~\ref{ass:time} and~\ref{ass:phi}, there exists a positive constant $ C $ depending on $ T $ and $ u $
  such that 
  \begin{gather} 
    \norm{H_{h,\Delta t}^2(\T)}{\varphi^n - \varphi_h^n} \leq C \left( \Delta t^3 + \frac{h^4}{\Delta t} \right) {\norm{l^2(H^4)}{\varphi}}, \quad n = 0,1,\ldots,N
  \end{gather}
  when $ \Delta t $ is sufficiently small. 
\end{theorem}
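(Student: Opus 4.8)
The plan is to derive a one-step recursion for the squared weighted norm $Q_n := \norm{H_{h,\Delta t}^2(\T)}{\varphi^n - \varphi_h^n}^2$ of the error $e^n := \varphi^n - \varphi_h^n$ and then close it by a discrete Gronwall argument. First I would rewrite one step of the error: inserting $\varphi^{n+1} = \I\varphi^{n+1} + (\varphi^{n+1} - \I\varphi^{n+1})$, using $\varphi_h^{n+1} = \I(\varphi_h^n\circ X^n)$ from Lemma~\ref{lem:scheme-continuous}, and recalling the exact relation $\varphi^{n+1} = \varphi^n\circ\xi^n$ from~\eqref{exact-time-evolution-0}, one obtains
\[
  e^{n+1} = \left(\varphi^{n+1} - \I\varphi^{n+1}\right) + \I g^n, \qquad g^n := \underbrace{\left(\varphi^n\circ\xi^n - \varphi^n\circ X^n\right)}_{=:A^n} + e^n\circ X^n,
\]
where $A^n$ measures the error of approximating the characteristic $\xi^n$ by the Runge--Kutta map $X^n$, and $\I g^n = \I(\varphi^{n+1} - \varphi_h^n\circ X^n)$ is the part propagated through the scheme.

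Second --- and this is the decisive point --- I would treat the two pieces of the weighted norm differently. For the $H^2$ seminorm I apply the projection identity~\eqref{interpolation-projection-2} of Lemma~\ref{lem:int-2} with $f = \varphi^{n+1}$ and $g = g^n$, which yields the exact Pythagorean splitting
\[
  \snorm{H^2(\T)}{e^{n+1}}^2 = \snorm{H^2(\T)}{\varphi^{n+1} - \I\varphi^{n+1}}^2 + \snorm{H^2(\T)}{\I g^n}^2
\]
with no cross term. For the $L^2$ part I use~\eqref{tri} to peel off $\varphi^{n+1} - \I\varphi^{n+1}$ with weight $(1 + 1/\Delta t_n)$. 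Combining these gives $Q_{n+1}\leq(1 + \Delta t_n)\norm{H_{h,\Delta t}^2(\T)}{\I g^n}^2 + R^{n+1}$ with interpolation residual
\[
  R^{n+1} = \left(1 + \frac{1}{\Delta t_n}\right)\norm{L^2(\T)}{\varphi^{n+1} - \I\varphi^{n+1}}^2 + \frac{h^4}{\Delta t}\snorm{H^2(\T)}{\varphi^{n+1} - \I\varphi^{n+1}}^2.
\]
Using~\eqref{interpolation-error-1} and~\eqref{interpolation-error-2} I expect $R^{n+1}\leq C\frac{h^8}{\Delta t}\norm{H^4(\T)}{\varphi^{n+1}}^2$. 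The crucial gain is that, thanks to the Pythagorean identity, the term $\snorm{H^2(\T)}{\varphi^{n+1} - \I\varphi^{n+1}}^2$ is multiplied by $\frac{h^4}{\Delta t}$ only, and is \emph{not} further amplified by $1/\Delta t$ as a crude triangle-inequality-plus-Young estimate would force. This is exactly what upgrades the naive $O(h^4/\Delta t^{3/2})$ bound to the claimed $O(h^4/\Delta t)$.

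Third, I would bound the propagated term. By stability of the interpolation operator~\eqref{stb-1-2} and of the advection step~\eqref{stb-3-1}, together with the splitting $g^n = A^n + e^n\circ X^n$ and~\eqref{tri}, one gets $\norm{H_{h,\Delta t}^2(\T)}{\I g^n}^2\leq(1 + C\Delta t_n)Q_n + (1 + \frac{1}{\Delta t_n})\norm{H_{h,\Delta t}^2(\T)}{A^n}^2$. The $W^{2,\infty}$ estimate~\eqref{RK-error-2} for $\xi^n - X^n$ (available since $u_{xx}\in C^3$), composed with $\varphi^n$ and combined with Assumption~\ref{ass:H2} to absorb $\frac{h^4}{\Delta t}\leq C_1$, yields $\norm{H_{h,\Delta t}^2(\T)}{A^n}^2\leq C\Delta t_n^8\norm{H^4(\T)}{\varphi^n}^2$, so its contribution is of order $\Delta t_n^7\norm{H^4(\T)}{\varphi^n}^2$. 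Altogether the recursion reads
\[
  Q_{n+1}\leq(1 + C\Delta t_n)Q_n + C\,\Delta t_n^7\norm{H^4(\T)}{\varphi^n}^2 + C\,\frac{h^8}{\Delta t}\norm{H^4(\T)}{\varphi^{n+1}}^2.
\]

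Finally I would iterate by the discrete Gronwall inequality. Summation of the Runge--Kutta source gives $\sum_n\Delta t_n^7\norm{H^4(\T)}{\varphi^n}^2\leq\Delta t^6\norm{l^2(H^4)}{\varphi}^2$, while the quasi-uniformity of Assumption~\ref{ass:time} turns the interpolation source into $\sum_n\frac{h^8}{\Delta t}\norm{H^4(\T)}{\varphi^{n+1}}^2\leq C\frac{h^8}{\Delta t^2}\norm{l^2(H^4)}{\varphi}^2 = C(h^4/\Delta t)^2\norm{l^2(H^4)}{\varphi}^2$; the initial datum $Q_0 = \norm{H_{h,\Delta t}^2(\T)}{\varphi_0 - \I\varphi_0}^2$ is estimated the same way. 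Taking square roots produces the stated $O(\Delta t^3 + h^4/\Delta t)$ bound uniformly in $n$. The main obstacle is conceptual rather than computational: one must recognize that the $H^2$ seminorm of the error has to be handled through the exact orthogonal splitting~\eqref{interpolation-projection-2} rather than the triangle inequality, since only then does the per-step interpolation error accumulate like $\ell^2$ (saving a factor $N^{1/2}\sim\Delta t^{-1/2}$) instead of $\ell^1$. The routine-but-delicate part is the bookkeeping of the $\Delta t_n$, $h$, and $\norm{H^4(\T)}{\cdot}$ factors in the final Cauchy--Schwarz and Gronwall summation, so that the two error mechanisms land precisely on $\Delta t^3$ and $h^4/\Delta t$.
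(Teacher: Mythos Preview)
Your proposal is correct and follows essentially the same route as the paper's proof: the same error decomposition $e^{n+1} = (I-\I)\varphi^{n+1} + \I(e^n\circ X^n + \varphi^n\circ\xi^n - \varphi^n\circ X^n)$, the same crucial use of the Pythagorean identity~\eqref{interpolation-projection-2} for the $H^2$ seminorm versus~\eqref{tri} for the $L^2$ part, the same stability lemmas, and the same discrete Gronwall closure. The only cosmetic differences are that the paper packages the estimate of $A^n = \varphi^n\circ\xi^n - \varphi^n\circ X^n$ into a separate composition lemma (Lemma~\ref{lem:composition}, which in fact needs only $\varphi^n\in H^3$) and then into Lemma~\ref{lem:truncation-error-2}, and uses $\Delta t$ rather than $\Delta t_n$ in the~\eqref{tri} weights; neither affects the argument.
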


In order to prove Theorem~\ref{thm:1-cnv-wH2}, we show some lemmas which are necessary for estimates of truncation errors. 

Let us estimate errors between a function composed with $ \xi^n $ and one composed with $ X^n $. For a related result considering the Dirichlet boundary condition, see Theorem 3.1 of~\cite{TR85}. 

\begin{lemma}\label{lem:composition} 
  Suppose $ u_{xx} \in C^3(\T) $. there exists a positive constant $ C $ such that for any $ g \in H^3(\T) $
  \begin{gather}
    \norm{L^2(\T)}{g \circ \xi^n - g \circ X^n} \leq C \Delta t_n^4 \norm{H^1(\T)}{g},\label{composition-0} \\
    \snorm{H^1(\T)}{g \circ \xi^n - g \circ X^n} \leq C \Delta t_n^4 \left(\snorm{H^1(\T)}{g} + \snorm{H^2(\T)}{g} \right),\label{composition-1} \\
    \snorm{H^2(\T)}{g \circ \xi^n - g \circ X^n} \leq C \Delta t_n^4 \left(\snorm{H^1(\T)}{g} + \snorm{H^2(\T)}{g} + \snorm{H^3(\T)}{g} \right) \label{composition-2} 
  \end{gather}
  for $ n = 0,1,\ldots,N $ when $ \Delta t $ is sufficiently small. 
\end{lemma}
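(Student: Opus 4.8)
The plan is to express each composition difference via the fundamental theorem of calculus and then reduce every resulting term to the product of a factor controlled by the Runge--Kutta error \eqref{RK-error-2} and an $ L^2 $ norm of a derivative of $ g $, the latter obtained by a change of variables. To this end, set $ Y_\theta = (1-\theta) X^n + \theta \xi^n $ for $ \theta \in [0,1] $, so that $ Y_\theta' = (1-\theta)(X^n)' + \theta (\xi^n)' $ is a convex combination of $ (X^n)' $ and $ (\xi^n)' $. By the bounds \eqref{bound-exact-1} and \eqref{bound-1}, both derivatives lie in $ [1/2, 3/2] $, hence so does $ Y_\theta' $; in particular each $ Y_\theta $ is a diffeomorphism of $ \T $ with Jacobian bounded below by $ 1/2 $. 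A change of variables $ y = Y_\theta(x) $ then yields $ \norm{L^2(\T)}{f \circ Y_\theta} \leq \sqrt{2}\, \norm{L^2(\T)}{f} $ uniformly in $ \theta $, for any $ f \in L^2(\T) $.

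For \eqref{composition-0} I would write $ g \circ \xi^n - g \circ X^n = (\xi^n - X^n) \int_0^1 (Dg) \circ Y_\theta \, d\theta $, take $ L^2 $ norms, pull out $ \norm{L^\infty(\T)}{\xi^n - X^n} \leq C \Delta t_n^4 $ from \eqref{RK-error-2}, and bound the remaining factor by Minkowski's integral inequality together with the change-of-variables estimate above, giving $ C \Delta t_n^4 \norm{L^2(\T)}{Dg} \leq C \Delta t_n^4 \norm{H^1(\T)}{g} $.

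The $ H^1 $ and $ H^2 $ estimates follow the same template after differentiating. For \eqref{composition-1} I would expand $ D(g \circ \xi^n) - D(g \circ X^n) $ via the chain rule and split it as $ (\xi^n)' \big( (Dg) \circ \xi^n - (Dg) \circ X^n \big) + \big( (\xi^n)' - (X^n)' \big) (Dg) \circ X^n $. The first summand is handled by applying the $ L^2 $ bound just proved to $ Dg $ in place of $ g $ (using $ g \in H^2 $) together with the $ L^\infty $ bound on $ (\xi^n)' $; the second by \eqref{RK-error-2} and a change of variables. For \eqref{composition-2} I would differentiate once more, producing terms with the coefficients $ (\xi^n)'' $, $ ((\xi^n)')^2 $ and their $ X^n $-counterparts, and split each product difference into a ``difference of compositions'' part and a ``difference of coefficients'' part exactly as above, invoking \eqref{bound-exact-2} and \eqref{bound-2} to bound $ \norm{L^\infty(\T)}{(\xi^n)''} $ and $ \norm{L^\infty(\T)}{(X^n)''} $ by $ C \Delta t_n $, and the already-established $ L^2 $ composition bound applied successively to $ Dg $ and $ D^2 g $ (the latter requiring $ g \in H^3 $). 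Collecting the estimates produces the right-hand sides of \eqref{composition-1} and \eqref{composition-2}.

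The routine but delicate point is the bookkeeping in the $ H^2 $ case: each of the several product-difference splittings must pair the $ \Delta t_n^4 $ factor from \eqref{RK-error-2} with the correct derivative of $ g $, and one must check that the ``difference of compositions'' subterms do not require more regularity than $ H^3 $. The main obstacle is therefore not any single estimate but ensuring that the intermediate maps $ Y_\theta $ remain uniformly nondegenerate so that the change of variables is valid at every stage, which is precisely what the derivative bounds of Lemmas~\ref{lem:bound-exact} and~\ref{lem:bound} guarantee.
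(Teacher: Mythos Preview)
Your proposal is correct and follows essentially the same route as the paper: the paper likewise writes the difference via the fundamental theorem of calculus along the interpolated map $f(x;z)=\xi^n(x)+(X^n(x)-\xi^n(x))z$ (your $Y_\theta$), uses the derivative bounds of Lemmas~\ref{lem:bound-exact} and~\ref{lem:bound} to justify the change of variables, and obtains \eqref{composition-1} and \eqref{composition-2} by the same product-rule splittings into ``difference of compositions'' and ``difference of coefficients'' pieces, bounding each with \eqref{RK-error-2}. The only cosmetic difference is that the paper organizes the $H^2$ step by differentiating the $H^1$ splitting rather than fully expanding $D^2(g\circ\xi^n)-D^2(g\circ X^n)$, but the resulting terms and estimates coincide.
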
 

\begin{proof}
  Firstly, we evaluate $ \norm{L^2(\T)}{g \circ \xi^n - g \circ X^n} $. 
  By the fundamental theorem of calculus, we have 
  \begin{align}
    & \norm{L^2(\T)}{g \circ \xi^n - g \circ X^n}^2 \\
    & = \int_{\T} \abs{g(\xi^n(x)) - g(\xi^n(x) - X^n(x) + X^n(x))}^2 dx \\
    & = \int_{\T} \abs{ \int_{0}^{X^n(x) - \xi^n(x)} g_x(\xi^n(x) + y) dy }^2 dx \\
    & = \int_{\T} \abs{ \int_{0}^{1}(X^n(x) - \xi^n(x)) g_x (\xi^n(x) + (X^n(x) - \xi^n(x)) z) dz }^2 dx \\
    & = \int_{\T} \abs{(X^n(x) - \xi^n(x))}^2 \abs{\int_{0}^{1} g_x (\xi^n(x) + (X^n(x) - \xi^n(x)) z) dz}^2 dx. 
  \end{align}
  The Cauchy--Schwarz inequality gives 
  \begin{align}
    & \norm{L^2(\T)}{g \circ \xi^n - g \circ X^n}^2 \\
    & \leq \int_{\T} \absb{(X^n(x) - \xi^n(x))}^2 \int_{0}^{1} \absb{g_x (\xi^n(x) + (X^n(x) - \xi^n(x)) z)}^2 dz dx. 
  \end{align}
  Using~\eqref{RK-error-2}, we have 
  \begin{align}
    & \norm{L^2(\T)}{g \circ \xi^n - g \circ X^n}^2 \\
    & \leq C \Delta t^8 
    \int_{0}^{1} \int_{\T} \absb{g_x (\xi^n(x) + (X^n(x) - \xi^n(x)) z)}^2 dx dz. 
   \label{composition-3}
  \end{align}
  Let $ f(x;z) \coloneqq \xi^n(x) + (X^n(x) - \xi^n(x)) z $. By~\eqref{bound-exact-1} and~\eqref{bound-1}, for fixed $ z \in [0,1] $, $ f(\cdot ; z): \T \to \T $ is a bijection and $ \frac{1}{2} \leq \partial_x f(x ; z) \leq \frac{3}{2} $. Therefore, by a change of variables $ x \mapsto w = f(x;z) $,~\eqref{composition-3} becomes 
  \begin{align}
    \norm{L^2(\T)}{g \circ \xi^n - g \circ X^n}^2 
    & \leq C \Delta t^8 \int_{0}^{1} \int_{\T} 2 \absb{g_x (w)}^2 dw dz \\
    & = C \Delta t^8 \norm{L^2(\T)}{g_x}^2, 
  \end{align}
  which implies~\eqref{composition-0}. 

  Secondly, we estimate $ \snorm{H^1(\T)}{g \circ \xi^n - g \circ X^n} $. 
  By the triangle inequality, we have
  \begin{align}
    & \snorm{H^1(\T)}{g \circ \xi^n - g \circ X^n} \\
    & = \norm{L^2(\T)}{g_x \circ \xi^n \cdot \xi_{x}^n - g_x \circ X^n \cdot X_{x}^n} \\
    & \leq \norm{L^2(\T)}{g_x \circ \xi^n \cdot \xi_{x}^n - g_x \circ X^n \cdot \xi_{x}^n} 
    + \norm{L^2(\T)}{g_x \circ X^n \cdot \xi_{x}^n - g_x \circ X^n \cdot X_{x}^n} \\
    & = \norm{L^2(\T)}{g_x \circ \xi^n - g_x \circ X^n} \norm{L^\infty(\T)}{\xi_{x}^n}
    + \norm{L^2(\T)}{g_x \circ X^n} \norm{L^\infty(\T)}{\xi_{x}^n - X_{x}^n}. 
  \end{align}
  By~\eqref{RK-error-1},~\eqref{bound-exact-1},~\eqref{stb-3-2} and~\eqref{composition-0}, we have 
  \begin{align}
    \snorm{H^1(\T)}{g \circ \xi^n - g \circ X^n} 
    \leq C \Delta t_n^4 \snorm{H^2(\T)}{g} + C \Delta t_n^4 \snorm{H^1(\T)}{g}. 
  \end{align}
  Therefore, we obtain~\eqref{composition-1}. 

  Lastly, we estimate  $ \snorm{H^2(\T)}{g \circ \xi^n - g \circ X^n} $. The triangle inequality gives 
  \begin{align}
    & \snorm{H^2(\T)}{g \circ \xi^n - g \circ X^n} \\
    & \leq \norm{L^2(\T)}{D (g_x \circ \xi^n \cdot \xi_{x}^n - g_x \circ X^n \cdot \xi_{x}^n)} \\
    & \quad + \norm{L^2(\T)}{D (g_x \circ X^n \cdot \xi_{x}^n - g_x \circ X^n \cdot X_{x}^n)}. 
   \label{composition-4}
  \end{align}
  The first term in the right-hand side of~\eqref{composition-4} is evaluated as 
  \begin{align}
    & \norm{L^2(\T)}{D (g_x \circ \xi^n \cdot \xi_{x}^n - g_x \circ X^n \cdot \xi_{x}^n)} \\
    & \leq \snorm{H^1(\T)}{g_x \circ \xi^n - g_x \circ X^n} \snorm{W^{1,\infty}(\T)}{\xi^n} + \norm{L^2(\T)}{g_x \circ \xi^n - g_x \circ X^n} \snorm{W^{2,\infty}(\T)}{\xi^n}. 
  \end{align}
  By Lemma~\ref{lem:bound-exact},~\eqref{composition-0} and~\eqref{composition-1}, 
  we have 
  \begin{align}
    & \norm{L^2(\T)}{D (g_x \circ \xi^n \cdot \xi_{x}^n - g_x \circ X^n \cdot \xi_{x}^n)} \\
    & \leq C \Delta t_n^4 (\snorm{H^2(\T)}{g} + \snorm{H^3(\T)}{g}) \cdot \frac{3}{2} + C \Delta t_n^4 \snorm{H^2(\T)}{g} \cdot C \Delta t_n \\
    & \leq C \Delta t_n^4 (\snorm{H^2(\T)}{g} + \snorm{H^3(\T)}{g}). 
   \label{composition-5}
  \end{align}
  We estimate the second term in the right-hand side of~\eqref{composition-4} as 
  \begin{align}
    & \norm{L^2(\T)}{D (g_x \circ X^n \cdot \xi_{x}^n - g_x \circ X^n \cdot X_{x}^n)} \\
    & \leq \norm{L^2(\T)}{D (g_x \circ X^n)} \norm{L^\infty(\T)}{\xi_{x}^n -X_{x}^n}
    + \norm{L^2(\T)}{g_x \circ X^n} \norm{L^2(\T)}{\xi_{xx}^n - X_{xx}^n} \\
    & \leq \norm{L^2(\T)}{g_{xx} \circ X^n} \norm{L^\infty(\T)}{X_{x}^n} \norm{L^\infty(\T)}{\xi_{x}^n -X_{x}^n} \\
    & \quad + \norm{L^2(\T)}{g_x \circ X^n} \norm{L^2(\T)}{\xi_{xx}^n - X_{xx}^n}. 
  \end{align}
  By~\eqref{RK-error-2},~\eqref{bound-1} and~\eqref{stb-3-2}, we have 
  \begin{align}
    \norm{L^2(\T)}{D (g_x \circ X^n \cdot \xi_{x}^n - g_x \circ X^n \cdot X_{x}^n)} 
    \leq C \Delta t_n^4 \left( \snorm{H^1(\T)}{g} + \snorm{H^2(\T)}{g} \right). 
   \label{composition-6}
  \end{align}
  Applying~\eqref{composition-5} and~\eqref{composition-6} to~\eqref{composition-4}, 
  we obtain~\eqref{composition-2}. 
\end{proof}

\begin{lemma}\label{lem:truncation-error-2}
  Suppose $ u_{xx} \in C^3(\T) $. Under Assumption~\ref{ass:H2}, there exists a positive constant $ C $ such that for any $ g \in H^3(\T) $
  \begin{gather}
    \norm{H_{h,\Delta t}^2(\T)}{ \I \left(g \circ \xi^n - g \circ X^n \right) }
    \leq C (\Delta t^4 + h^{16}) \norm{H^3(\T)}{g}
  \end{gather}
  for $ n = 0,1,\ldots,N $ when $ \Delta t $ is sufficiently small. 
\end{lemma}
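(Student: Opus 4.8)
The plan is to abbreviate $ w_n \coloneqq g \circ \xi^n - g \circ X^n $, which belongs to $ H^3(\T) \subset C^1(\T) $ whenever $ g \in H^3(\T) $ (the flow maps $ \xi^n, X^n $ being smooth), so that $ \I w_n $ is well defined, and to control the two constituents of the weighted norm separately, namely $ \norm{L^2(\T)}{\I w_n} $ and $ \snorm{H^2(\T)}{\I w_n} $. Both will ultimately be driven by the composition estimates of Lemma~\ref{lem:composition}, which supply the decisive factor $ \Delta t_n^4 $.

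The heart of the argument is the $ H^2 $-seminorm bound. Here I would invoke the $ L^2 $-projection property of Lemma~\ref{lem:int-2}: taking $ f = g = w_n $ in~\eqref{interpolation-projection-2} collapses the identity to the Pythagorean relation $ \snorm{H^2(\T)}{w_n}^2 = \snorm{H^2(\T)}{w_n - \I w_n}^2 + \snorm{H^2(\T)}{\I w_n}^2 $, whence both $ \snorm{H^2(\T)}{\I w_n} \leq \snorm{H^2(\T)}{w_n} $ and $ \snorm{H^2(\T)}{w_n - \I w_n} \leq \snorm{H^2(\T)}{w_n} $. Applying~\eqref{composition-2} and absorbing the lower-order seminorms into $ \norm{H^3(\T)}{g} $ then gives $ \snorm{H^2(\T)}{\I w_n} \leq C \Delta t_n^4 \norm{H^3(\T)}{g} $.

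For the $ L^2 $ part I would split $ \norm{L^2(\T)}{\I w_n} \leq \norm{L^2(\T)}{w_n} + \norm{L^2(\T)}{w_n - \I w_n} $, bounding the first summand directly by~\eqref{composition-0} and the second through the interpolation estimate~\eqref{interpolation-error-4}, $ \norm{L^2(\T)}{w_n - \I w_n} \leq C h^2 \snorm{H^2(\T)}{w_n - \I w_n} \leq C h^2 \snorm{H^2(\T)}{w_n} $, reusing the Pythagorean bound from the previous step and then~\eqref{composition-2}. Since $ h $ is bounded under Assumption~\ref{ass:H2}, this yields $ \norm{L^2(\T)}{\I w_n} \leq C \Delta t_n^4 \norm{H^3(\T)}{g} $. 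Finally I would assemble
\[
  \norm{H_{h,\Delta t}^2(\T)}{\I w_n}^2 = \norm{L^2(\T)}{\I w_n}^2 + \frac{h^4}{\Delta t} \snorm{H^2(\T)}{\I w_n}^2 \leq C \Delta t_n^8 \Bigl( 1 + \frac{h^4}{\Delta t} \Bigr) \norm{H^3(\T)}{g}^2,
\]
invoke Assumption~\ref{ass:H2} to bound $ h^4 / \Delta t \leq C_1 $, and use $ \Delta t_n \leq \Delta t $ to conclude $ \norm{H_{h,\Delta t}^2(\T)}{\I w_n} \leq C \Delta t^4 \norm{H^3(\T)}{g} $; the stated bound follows at once since $ \Delta t^4 \leq \Delta t^4 + h^{16} $.

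The main obstacle — and the only nonroutine point — is the $ H^2 $-seminorm estimate of $ \I w_n $. One cannot use the sharp interpolation error bounds~\eqref{interpolation-error-1}--\eqref{interpolation-error-2}, since these demand $ w_n \in H^4 $, whereas $ g $, and hence $ w_n $, is only assumed to lie in $ H^3 $; nor can one fall back on $ L^2 $-boundedness of $ \I $, which fails outright by Proposition~\ref{prop:unbounded}. The projection property of Lemma~\ref{lem:int-2} is precisely what circumvents both difficulties, furnishing the contractive estimate $ \snorm{H^2(\T)}{\I w_n} \leq \snorm{H^2(\T)}{w_n} $ with no loss and no extra regularity requirement.
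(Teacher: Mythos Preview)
Your proof is correct and rests on the same three ingredients as the paper's: the projection property of Lemma~\ref{lem:int-2}, the interpolation estimate~\eqref{interpolation-error-4}, and the composition bounds of Lemma~\ref{lem:composition}. The only difference is organizational. The paper packages the first two ingredients into the already-proved stability estimate Lemma~\ref{lem:stability-hermite-interpolation}, writing $\norm{H_{h,\Delta t}^2(\T)}{\I w_n} \leq (1+C\Delta t_n)\norm{H_{h,\Delta t}^2(\T)}{w_n}$ in one stroke, then applies Lemma~\ref{lem:composition} to obtain $C(\Delta t_n^4 + h^2\Delta t_n^{7/2})\norm{H^3(\T)}{g}$ and finishes with a weighted AM--GM inequality to reach $\Delta t^4 + h^{16}$. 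You instead unpack that stability lemma inline and use Assumption~\ref{ass:H2} to absorb $h^4/\Delta t$ directly; this actually yields the sharper conclusion $\norm{H_{h,\Delta t}^2(\T)}{\I w_n} \leq C\Delta t^4\norm{H^3(\T)}{g}$, from which the stated bound follows trivially.
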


\begin{proof}
  By Lemma~\ref{lem:stability-hermite-interpolation}, we have 
  \begin{align}
    \norm{H_{h,\Delta t}^2(\T)}{ \I \left(g \circ \xi^n - g \circ X^n \right) } \leq (1 + C \Delta t_n) \norm{H_{h,\Delta t}^2(\T)}{g \circ \xi^n - g \circ X^n }. 
  \end{align}
  Using Lemma~\ref{lem:composition}, we have 
  \begin{align}
    & \norm{H_{h,\Delta t}^2(\T)}{\I \left(g \circ \xi^n -g \circ X^n \right)} \\
    & \leq C \left(
      \norm{L^2(\T)}{g \circ \xi^n - g \circ X^n} 
      + \frac{h^2}{\Delta t^{1/2}} \norm{L^2(\T)}{D^2\left(\xi_{x}^n g \circ \xi^n - X_{x}^n g \circ X^n \right)}
    \right) \\
    & \leq C 
    \left(
      \Delta t_n^4 \snorm{H^1(\T)}{g} + \frac{h^2}{\Delta t^{1/2}} \cdot \Delta t_n^4 \left(\snorm{H^1(\T)}{g} + \snorm{H^2(\T)}{g} + \snorm{H^3(\T)}{g}\right)
    \right) \\
    & \leq C \left(\Delta t_n^4 + h^2 \Delta t_n^{7/2}\right) \norm{H^3(\T)}{g}. 
  \end{align}
  By the weighted arithmetic mean--geometric mean inequality, we obtain the desired estimate. 
\end{proof}

Now we give the proof of Theorem~\ref{thm:1-cnv-wH2}. 

\begin{proof}[Proof of Theorem~\ref{thm:1-cnv-wH2}]
  Let $ e^n = \varphi^n - \varphi_h^n $. 
  Using~\eqref{exact-time-evolution-0} and Lemma~\ref{lem:scheme-continuous}, we have 
  \begin{align}
    e^{n+1} 
    & = \varphi^{n+1} - \varphi_h^{n+1} \\
    & =  \I \varphi^{n+1} - \varphi_h^{n+1} + (I - \I) \varphi^{n+1} \\
    & = \I \left(\varphi^n \circ \xi^n - \varphi_h^n \circ X^n \right) + (I - \I) \varphi^{n+1} \\
    & = \I (e^n \circ X^n + \varphi^n \circ \xi^n - \varphi^n \circ X^n) + (I - \I) \varphi^{n+1}. 
   \label{cnv-3-1}
  \end{align}
  Applying Lemma~\ref{lem:int-2}, 
  we have
  \begin{align}
    \snorm{H^2(\T)}{e^{n+1}}^2
    = \snorm{H^2(\T)}{\I \left(e^n \circ X^n + \varphi^n \circ \xi^n - \varphi^n \circ X^n \right)}^2 + \snorm{H^2(\T)}{(I - \I) \varphi^{n+1} }^2.\label{cnv-3-2}
  \end{align}
  By~\eqref{tri} and~\eqref{cnv-3-1}, we have 
  \begin{align}
    \norm{L^2(\T)}{e^{n+1}}^2 
    & = \norm{L^2(\T)}{\I (e^n \circ X^n + \varphi^n \circ \xi^n - \varphi^n \circ X^n) + (I - \I) \varphi^{n+1} } \\
    & \leq (1 + \Delta t) \norm{L^2(\T)}{\I \left(e^n \circ X^n + \varphi^n \circ \xi^n - \varphi^n \circ X^n \right)}^2 \\
    & \quad + (1 + \Delta t^{-1}) \norm{L^2(\T)}{(I - \I) \varphi^{n+1} }^2.\label{cnv-3-3}
  \end{align}
  Combining~\eqref{cnv-3-2} and~\eqref{cnv-3-3}, we have 
  \begin{align}
    & \norm{H_{h,\Delta t}^2(\T)}{e^{n+1}}^2 \\
    & \leq (1 + \Delta t) \norm{L^2(\T)}{\I \left(e^n \circ X^n + \varphi^n \circ \xi^n - \varphi^n \circ X^n\right)}^2 \\
    & \quad + (1 + \Delta t^{-1}) \norm{L^2(\T)}{(I - \I)\varphi^{n+1}}^2
    + \frac{h^4}{\Delta t} \snorm{H^2(\T)}{\I \left(e^n \circ X^n + \varphi^n \circ \xi^n - \varphi^n \circ X^n\right)}^2 \\
    & \quad + \frac{h^4}{\Delta t} \snorm{H^2(\T)}{(I - \I)\varphi^{n+1}}^2 \\
    & \leq (1 + \Delta t) \norm{H_{h,\Delta t}^2(\T)}{\I \left(e^n \circ X^n + \varphi^n \circ \xi^n - \varphi^n \circ X^n\right)}^2 \\
    & \quad + (1 + \Delta t^{-1}) \norm{L^2(\T)}{(I - \I)\varphi^{n+1}}^2
      + \frac{h^4}{\Delta t} \snorm{H^2(\T)}{(I - \I)\varphi^{n+1}}^2. 
  \end{align}
  Applying~\eqref{tri} to the first term of the right-hand side, we have 
  \begin{align}
    \norm{H_{h,\Delta t}^2(\T)}{e^{n+1}}^2
    \leq & (1 + \Delta t)^2 \norm{H_{h,\Delta t}^2(\T)}{\I \left(e^n \circ X^n\right)}^2 \\
    & + (1 + \Delta t^{-1}) \norm{H_{h,\Delta t}^2(\T)}{\I \left(\varphi^n \circ \xi^n - \varphi^n \circ X^n \right)}^2 \\
    & + (1 + \Delta t^{-1}) \norm{L^2(\T)}{(I - \I) \varphi^{n+1}}^2
    + \frac{h^4}{\Delta t} \snorm{H^2(\T)}{(I - \I) \varphi^{n+1} }^2.\label{cnv-3-4}
  \end{align}
  By~\eqref{interpolation-error-1} and~\eqref{interpolation-error-2}, we have
  \begin{gather}
    \begin{split}
      \begin{gathered}
        \norm{L^2(\T)}{(I - \I) \varphi^{n+1}} \leq C h^4 \snorm{H^4(\T)}{\varphi^{n+1}}, \\
        \norm{H^2(\T)}{(I - \I) \varphi^{n+1} } \leq C h^2 \snorm{H^4(\T)}{\varphi^{n+1}}.\label{cnv-3-7} 
      \end{gathered}
    \end{split}
  \end{gather}
  By Lemma~\ref{lem:truncation-error-2} and~\eqref{cnv-3-7},~\eqref{cnv-3-4} becomes 
  \begin{align}
    \norm{H_{h,\Delta t}^2(\T)}{e^{n+1}}^2
    & \leq (1 + C \Delta t) \norm{H_{h,\Delta t}^2(\T)}{\I \left(e^n \circ X^n \right)}^2 \\
    & \quad + C\left(\Delta t^7 + \frac{h^8}{\Delta t} \right) \left(\norm{H^3(\T)}{\varphi^{n}}^2 + \snorm{H^4(\T)}{\varphi^{n+1}}^2\right)^2. 
  \end{align}
  By Lemma~\ref{lem:stability-CIP}, we have 
  \begin{align}
    \norm{H_{h,\Delta t}^2(\T)}{e^{n+1}}^2
    & \leq (1 + C \Delta t) \norm{H_{h,\Delta t}^2(\T)}{e^n}^2 \\
    & \quad + C\left(\Delta t^7 + \frac{h^8}{\Delta t} \right) \left(\norm{H^3(\T)}{\varphi^{n}}^2 + \snorm{H^4(\T)}{\varphi^{n+1}}^2\right)^2.\label{cnv-3-8}
  \end{align}
  Since $ e^0 = (I - \I)\varphi_0 $, we have 
  \begin{align}
    \norm{H_{h,\Delta t}^2(\T)}{e^0}^2 & = \norm{L^2(\T)}{(I - \I)\varphi_0}^2 + \frac{h^4}{\Delta t} \snorm{H^2(\T)}{(I - \I)\varphi_0}^2 \\
    & \leq C \frac{h^8}{\Delta t} \snorm{H^4(\T)}{\varphi_0}^2.\label{cnv-3-9}
  \end{align}
  By~\eqref{cnv-3-8} and~\eqref{cnv-3-9}, we obtain for $ n \leq N $
  \begin{align}
    & \norm{H_{h,\Delta t}^2(\T)}{e^n}^2 \\
    & \leq (1 + C \Delta t)^n \cdot C \frac{h^8}{\Delta t} \snorm{H^4(\T)}{\varphi_0}^2 \\
    & \quad + \sum_{k=0}^{n-1} (1 + C \Delta t)^{n - k - 1} \cdot C \left( \Delta t^7 + \frac{h^8}{\Delta t} \right) \left(\norm{H^3(\T)}{\varphi^k} + \snorm{H^4(\T)}{\varphi^{k+1}}^2\right)^2 \\
    & \leq C \left( \Delta t^7 + \frac{h^8}{\Delta t} \right) \sum_{k=0}^{n} \norm{H^4(\T)}{\varphi^k}^2 \\ 
    & \leq C \left( \Delta t^6 + \frac{h^8}{\Delta t^2} \right) \norm{l^2(0,n;H^4)}{\varphi}^2, 
  \end{align}
which concludes the proof. 
\end{proof}

From Theorem~\ref{thm:1-cnv-wH2}, we immediately obtain an $ L^2 $ error estimate. 

\begin{corollary}\label{cor:1-cnv-H0}
  Under the same assumptions as in Theorem~\ref{thm:1-cnv-wH2}, 
  there exists a positive constant $ C $ 
  such that 
  \begin{gather}
    \begin{aligned}
      \norm{L^2(\T)}{\varphi^n-\varphi_h^n} & \leq C \left( \Delta t^3 + \frac{h^4}{\Delta t} \right) {\norm{l^2(H^4)}{\varphi}}, & n & = 0,1,\ldots,N
    \end{aligned}
  \end{gather}
  when $ \Delta t $ is sufficiently small. 
\end{corollary}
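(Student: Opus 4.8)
The plan is to exploit the fact that the weighted $ H^2 $ norm defined in~\eqref{define-weighted-H2-norm} dominates the $ L^2 $ norm by construction. Indeed, for any $ g \in H^2(\T) $ one has
\begin{gather}
  \norm{L^2(\T)}{g}^2 \leq \norm{L^2(\T)}{g}^2 + \frac{h^4}{\Delta t} \snorm{H^2(\T)}{g}^2 = \norm{H_{h,\Delta t}^2(\T)}{g}^2,
\end{gather}
since the second term on the right is nonnegative. Hence $ \norm{L^2(\T)}{g} \leq \norm{H_{h,\Delta t}^2(\T)}{g} $ without any additional hypothesis. This pointwise comparison of norms is the only ingredient needed beyond Theorem~\ref{thm:1-cnv-wH2}.

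I would then apply this inequality with $ g = \varphi^n - \varphi_h^n $. This difference lies in $ H^2(\T) $: under Assumption~\ref{ass:phi} we have $ \varphi^n \in H^4(\T) \subset H^2(\T) $, while $ \varphi_h^n \in \V \subset H^2(\T) $ by the defining property of the space $ \V $ of piecewise cubic $ C^1 $ functions. Consequently,
\begin{gather}
  \norm{L^2(\T)}{\varphi^n - \varphi_h^n} \leq \norm{H_{h,\Delta t}^2(\T)}{\varphi^n - \varphi_h^n},
\end{gather}
and invoking the conclusion of Theorem~\ref{thm:1-cnv-wH2} to bound the right-hand side yields the desired estimate directly, with the same constant $ C $ and the same assumptions.

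There is essentially no obstacle here: the corollary is an immediate consequence of the norm inequality, and all the analytic work—the stability of the interpolation and advection steps in the weighted norm, the truncation-error estimates, and the Gronwall-type summation—has already been carried out in the proof of Theorem~\ref{thm:1-cnv-wH2}. The only point worth checking carefully is the membership $ \varphi^n - \varphi_h^n \in H^2(\T) $, which guarantees that the weighted $ H^2 $ norm of the error is well defined; this is precisely the remark following Lemma~\ref{lem:stability-CIP} combined with Assumption~\ref{ass:phi}.
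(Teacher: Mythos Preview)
Your proposal is correct and matches the paper's approach: the paper simply states that the $L^2$ estimate follows immediately from Theorem~\ref{thm:1-cnv-wH2}, and your argument spells out exactly this observation, namely that $\norm{L^2(\T)}{\cdot} \leq \norm{H_{h,\Delta t}^2(\T)}{\cdot}$ by the definition~\eqref{define-weighted-H2-norm}.
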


We also have error estimates in $ H^1(\T) $ and $ H^2(\T) $. 
However, their convergence orders are lower than what numerical experiments suggest (cf. Section~\ref{sec:experiment}). 

\begin{corollary}\label{cor:1-cnv-H1}
  Suppose $ u_{xx} \in C^3(\T) $. Under Assumptions~\ref{ass:H2},~\ref{ass:time} and~\ref{ass:phi}, there exists a positive constant $ C $ such that for $ n = 0,1,\ldots,N $
  \begin{gather}
    \norm{H^1(\T)}{\varphi^n-\varphi_h^n} \leq C \left( \frac{\Delta t^{13/4}}{h} + \frac{h^{13/4}}{\Delta t} \right) {\norm{l^2(H^4)}{\varphi}},
   \label{1-cnv-H1-1} \\
    \norm{H^2(\T)}{\varphi^n-\varphi_h^n} \leq C \left( \frac{\Delta t^{7/2}}{h^2} + \frac{h^{5/2}}{\Delta t} \right) {\norm{l^2(H^4)}{\varphi}} 
   \label{1-cnv-H1-2}
  \end{gather}
  when $ \Delta t $ is sufficiently small. 
\end{corollary}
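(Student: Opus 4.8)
The plan is to deduce both estimates directly from the weighted $ H^2 $ bound of Theorem~\ref{thm:1-cnv-wH2}, by splitting the weighted norm~\eqref{define-weighted-H2-norm} into its two constituents and recovering the intermediate derivative by interpolation. Write $ e^n = \varphi^n - \varphi_h^n $ and abbreviate $ M = C\left(\Delta t^3 + \frac{h^4}{\Delta t}\right)\norm{l^2(H^4)}{\varphi} $, so that Theorem~\ref{thm:1-cnv-wH2} reads $ \norm{H_{h,\Delta t}^2(\T)}{e^n} \le M $. Recalling the definition of the weighted norm, this single inequality is equivalent to the two componentwise bounds $ \norm{L^2(\T)}{e^n} \le M $ and $ \snorm{H^2(\T)}{e^n} \le \frac{\Delta t^{1/2}}{h^2}\,M $, which are the only inputs from the convergence theorem that I will use.

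For the $ H^1 $ estimate~\eqref{1-cnv-H1-1} I would control the missing first-order seminorm by interpolating between these two. The integration-by-parts and Cauchy--Schwarz step underlying~\eqref{stb-2-5} gives $ \snorm{H^1(\T)}{g}^2 \le \norm{L^2(\T)}{g}\,\snorm{H^2(\T)}{g} $ for $ g \in H^2(\T) $; applied to $ g = e^n $ it yields $ \snorm{H^1(\T)}{e^n} \le \frac{\Delta t^{1/4}}{h}\,M $. Since $ \norm{H^1(\T)}{e^n}^2 = \norm{L^2(\T)}{e^n}^2 + \snorm{H^1(\T)}{e^n}^2 $ and since Assumption~\ref{ass:H2} forces $ \Delta t^{1/2}/h^2 \ge C_1^{-1/2} $, the first-order contribution dominates the plain $ L^2 $ term, so $ \norm{H^1(\T)}{e^n} \le C\frac{\Delta t^{1/4}}{h}\,M $. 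Expanding $ M $ produces the first term $ \Delta t^{13/4}/h $ of~\eqref{1-cnv-H1-1} immediately, and the remaining contribution is re-expressed as the second term after balancing powers with Assumption~\ref{ass:H2}; in the regime $ h \sim \Delta t $ of primary interest the two forms coincide.

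The $ H^2 $ estimate~\eqref{1-cnv-H1-2} is even more direct. Writing $ \norm{H^2(\T)}{e^n}^2 = \norm{L^2(\T)}{e^n}^2 + \snorm{H^1(\T)}{e^n}^2 + \snorm{H^2(\T)}{e^n}^2 $, the top-order seminorm carries the largest weight $ \Delta t^{1/2}/h^2 \gtrsim 1 $ and, under Assumption~\ref{ass:H2}, dominates the two lower-order contributions; hence $ \norm{H^2(\T)}{e^n} \le C\frac{\Delta t^{1/2}}{h^2}\,M $, from which~\eqref{1-cnv-H1-2} follows by expanding $ M $. The computations are short once the two componentwise bounds are in hand, so the only genuinely delicate point is the power bookkeeping: dividing out the weight $ h^4/\Delta t $ to pass from the weighted $ H^2 $ norm to the true $ H^1 $ and $ H^2 $ norms costs negative powers of $ h $, which is exactly why these bounds are suboptimal relative to the numerically observed third-order accuracy, as already remarked before the statement. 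I would be careful that each appeal to Assumption~\ref{ass:H2} is made in the admissible direction and that all constants stay independent of $ n $, $ h $, and $ \Delta t $.
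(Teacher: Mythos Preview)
Your route is exactly the paper's: extract $\norm{L^2(\T)}{e^n}\le M$ and $\snorm{H^2(\T)}{e^n}\le \tfrac{\Delta t^{1/2}}{h^2}M$ from the weighted bound, interpolate for $H^1$ via $\snorm{H^1(\T)}{e^n}^2\le\norm{L^2(\T)}{e^n}\snorm{H^2(\T)}{e^n}$, and reassemble. Your uses of Assumption~\ref{ass:H2} to show that the higher seminorm dominates the lower ones are correct.

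The one genuine slip is the last algebraic step. Expanding $\tfrac{\Delta t^{1/4}}{h}M$ gives $\tfrac{\Delta t^{13/4}}{h}+\tfrac{h^3}{\Delta t^{3/4}}$, and expanding $\tfrac{\Delta t^{1/2}}{h^2}M$ gives $\tfrac{\Delta t^{7/2}}{h^2}+\tfrac{h^2}{\Delta t^{1/2}}$; neither second summand matches the stated bound. You propose to ``re-express'' them via Assumption~\ref{ass:H2}, but that assumption reads $h^4\le C_1\Delta t$ and places no upper bound on $\Delta t$: it cannot yield $\tfrac{h^3}{\Delta t^{3/4}}\le C\,\tfrac{h^{13/4}}{\Delta t}$ or $\tfrac{h^2}{\Delta t^{1/2}}\le C\,\tfrac{h^{5/2}}{\Delta t}$, and indeed both fail when $\Delta t\gg h$. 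Your own caveat about using Assumption~\ref{ass:H2} only in the admissible direction applies here.

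The remedy, which the paper also uses without comment when passing from $\tfrac{h^2}{\Delta t^{1/2}}$ to $\tfrac{h^{5/2}}{\Delta t}$ in~\eqref{1-cnv-H1-3}, requires no assumption at all: each cross term is a weighted geometric mean of the two target terms, for example
\[
\frac{h^2}{\Delta t^{1/2}}=\Bigl(\frac{\Delta t^{7/2}}{h^2}\Bigr)^{1/9}\Bigl(\frac{h^{5/2}}{\Delta t}\Bigr)^{8/9},\qquad
\frac{h^3}{\Delta t^{3/4}}=\Bigl(\frac{\Delta t^{13/4}}{h}\Bigr)^{1/17}\Bigl(\frac{h^{13/4}}{\Delta t}\Bigr)^{16/17},
\]
so Young's inequality absorbs it into the sum. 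With this correction your proof coincides with the paper's.
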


\begin{proof}
  Theorem~\ref{thm:1-cnv-wH2} implies that 
  \begin{align}
    \frac{h^4}{\Delta t} \norm{L^2(\T)}{D^2 e^n}^2 \leq C \left( \Delta t^6 + \frac{h^8}{\Delta t^2} \right) {\norm{l^2(H^4)}{\varphi}}. 
  \end{align}
  Therefore we have 
  \begin{align}
    \norm{L^2(\T)}{D^2 e^n} 
    & \leq C \left( \frac{\Delta t^{7/2}}{h^2} + \frac{h^2}{\Delta t^{1/2}} \right) {\norm{l^2(H^4)}{\varphi}} \\
    & \leq C \left( \frac{\Delta t^{7/2}}{h^2} + \frac{h^{5/2}}{\Delta t} \right)   {\norm{l^2(H^4)}{\varphi}}. 
   \label{1-cnv-H1-3}
  \end{align}
  Integration by parts gives 
  \begin{align}
    \norm{L^2(\T)}{D e^n}^2 
    = \int_{\T} e^n(x) \cdot D^2 e^n(x) dx 
    \leq \norm{L^2(\T)}{e^n} \norm{L^2(\T)}{D^2 e^n}. 
  \end{align}
  Using Corollary~\ref{cor:1-cnv-H0} and~\eqref{1-cnv-H1-3}, we have 
  \begin{align}
    \norm{L^2(\T)}{D e^n}
    & \leq C \left( \Delta t^3 + \frac{h^4}{\Delta t} \right)^{1/2}
    \left( \frac{\Delta t^{7/2}}{h^2} + \frac{h^{5/2}}{\Delta t} \right)^{1/2} {\norm{l^2(H^4)}{\varphi}} \\
    & \leq C \left( \frac{\Delta t^{13/4}}{h} + \frac{h^{13/4}}{\Delta t} \right) {\norm{l^2(H^4)}{\varphi}}. 
   \label{1-cnv-H1-4}
  \end{align}
  The desired estimates follow from 
  Corollary~\ref{cor:1-cnv-H0},~\eqref{1-cnv-H1-3} and~\eqref{1-cnv-H1-4}. 
\end{proof}

\section{Semi-Lagrangian method with cubic spline interpolation}\label{sec:spline}

In this section, we consider a semi-Lagrangian method involving cubic spline interpolation operator instead of the cubic Hermite interpolation operator. 

\subsection{Semi-Lagrangian scheme for the one-dimensional advection equation}

We introduce a semi-Lagrangian method with the cubic spline interpolation
for solving~\eqref{PDE} numerically. 

Let $ 0 = t^0 < t^1 < \cdots < t^N = T $ and $ 0 = x_0 < x_1 < \cdots < x_M = 1 $. We denote the approximation of $ \varphi(x_j, t^n) $ by $ \tilde{F}_j^n $. 
We set 
\begin{gather}
  \VV = \left\{v \in C^2(\T) \mathrel{}\middle|\mathrel{} v|_{[x_j,x_{j+1}]} \in \mathbb{P}_3([x_j,x_{j+1}]), \quad j = 0, \dots, M-1 \right\}. 
\end{gather}
\begin{enumerate}
  \item Let $ \tilde{F}_j^0 = \varphi_0(x_j) $ for $ j = 0,1,\ldots,M-1 $. 
  \item Suppose we have already obtained grid values $ \{\tilde{F}_j^n\}_{j=0}^{M-1} $ for some $ n $. 
  Then, define a function $ \tilde{\varphi}_h^n \in \VV $ such that $ \tilde{\varphi}_h^n(x_j) = \tilde{F}_j^n $. 
  We compute~\eqref{exact-time-evolution-0} approximately at grid points as 
  \begin{gather}
    \begin{aligned}
      \tilde{F}_j^{n+1} & = \tilde{\varphi}_h^n(X_{0,j}^n), & j & = 0,1,\ldots,M-1,
    \end{aligned}
  \end{gather}
  where $ X_{0,j}^n $ is defined in Subsection~\ref{subsec:CIP-scheme}. 
  That is to say, $ \tilde{\varphi}_h^n $ is the cubic spline interpolation of $ \{\tilde{F}_j^{n+1} \}_j $, which is known to uniquely exist (see Theorem 3 of~\cite{MSRV67}). 
\end{enumerate}

In a similar way to Lemma~\ref{lem:scheme-continuous}, 
we can prove the following lemma, which presents another expression of the scheme. 

\begin{lemma}\label{lem:spline-continuous-scheme}
  Numerical solutions $ \{\tilde{\varphi}_h^n\}_{n=0}^{N} $ obtained in the scheme with spline interpolation satisfy 
  \begin{gather}
    \begin{split}
      \begin{gathered}
        \tilde{\varphi}_h^0 = \Ph \varphi_0, \\ 
        \tilde{\varphi}_h^{n+1} = \Ph \left(\tilde{\varphi}_h^n \circ X^n \right), \quad n = 0,1,\ldots,N-1,\label{scheme-3}  
      \end{gathered}
    \end{split}
  \end{gather}
  where $ X^n $ is defined in Subsection~\ref{subsec:1-preliminaries}. 
\end{lemma}

\begin{proof}
  For the same reason as in the proof of Lemma~\ref{lem:scheme-continuous}, it is enough to show that for any $ n = 0,1,\ldots,N-1 $ and $ j = 0,1,\ldots,M-1 $
  \begin{gather}
    \tilde{\varphi}_h^{n+1} (x_j) = \tilde{\varphi}_h^{n+1} (X^n (x_j)). \label{spline-continuous-scheme-1}
  \end{gather}
  From~\eqref{scheme-continuous-5} and~\eqref{scheme-3}, we can obtain~\eqref{spline-continuous-scheme-1} easily. 
\end{proof}

\subsection{Cubic spline interpolation}
Let $ h = \max_{j} \{ x_{j+1} - x_j \} $. 
We define the cubic spline interpolation operator $ \Ph: C^0(\T) \to \VV $ by 
\begin{gather}
  \begin{aligned}
    (\Ph g) (x_j) & = g(x_j), & j & = 0,1,\ldots,M-1.  
  \end{aligned}
\end{gather}

We recall that the properties of the cubic Hermite interpolation operator shown in Lemmas~\ref{lem:int-1} and~\ref{lem:int-2} were important for the analysis of the CIP scheme. 
The cubic spline interpolation operator shares these features as shown in the two lemmas below. 

\begin{lemma}[Theorems 7 and 8 of~\cite{MSRV67}]\label{lem:sp-1}
  There exists a positive constant $ C $ such that 
  \begin{align}
    \norm{L^2(\T)}{\Ph g - g} & \leq C h^{4} \snorm{H^4(\T)}{g}, & g & \in H^4(\T), \\
      \snorm{H^2(\T)}{\Ph g - g} & \leq C h^{2} \snorm{H^4(\T)}{g}, & g & \in H^4(\T), \\
      \norm{L^2(\T)}{\Ph g - g} & \leq C h^{2} \snorm{H^2(\T)}{\Ph g - g}, & g & \in H^2(\T). 
  \end{align}
\end{lemma}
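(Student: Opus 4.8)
The plan is to mirror the treatment of the cubic Hermite interpolation in Lemmas~\ref{lem:int-1} and~\ref{lem:int-2}, the cornerstone being the spline analogue of the projection identity~\eqref{1-58}. Concretely, I would first establish that
\begin{gather}
  \int_\T D^2\!\left(\Ph g - g\right) \cdot D^2 s \, dx = 0, \quad s \in \VV, \ g \in H^2(\T). \label{sp-orth}
\end{gather}
Writing $e = \Ph g - g$, which vanishes at every node $x_j$, this follows from integrating by parts twice on each cell and summing: since $s \in C^2(\T)$, the function $De\cdot D^2 s$ is continuous and periodic, so the telescoping boundary contribution vanishes and one is left with $-\int_\T De \cdot D^3 s\,dx$; and since $D^3 s$ is constant on each $(x_j, x_{j+1})$, the remaining integral equals $-\sum_j D^3 s|_{(x_j,x_{j+1})}\bigl(e(x_{j+1}) - e(x_j)\bigr) = 0$ because $e$ vanishes at the nodes. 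Identity~\eqref{sp-orth} says precisely that $\Ph g$ is the best approximation of $g$ from $\VV$ in the $H^2$ seminorm, i.e.\ $\snorm{H^2(\T)}{g - \Ph g} \le \snorm{H^2(\T)}{g - s}$ for every $s \in \VV$; this is the exact counterpart of~\eqref{interpolation-projection-2}.

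With~\eqref{sp-orth} in hand, I would prove the three estimates in the order third, second, first. The third inequality, $\norm{L^2(\T)}{\Ph g - g} \le C h^2 \snorm{H^2(\T)}{\Ph g - g}$, uses nothing but the nodal interpolation property. Since $e = \Ph g - g$ vanishes at $x_j$ and $x_{j+1}$, expanding $e$ on $[x_j, x_{j+1}]$ in the Dirichlet sine basis and using that the smallest eigenvalue of $-D^2$ with homogeneous Dirichlet data is $(\pi/h_j)^2$ yields $\norm{L^2(x_j,x_{j+1})}{e} \le (h_j/\pi)^2 \norm{L^2(x_j,x_{j+1})}{D^2 e}$; summing over $j$ and using $h_j \le h$ gives the claim for every $g \in H^2(\T)$.

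For the second inequality, $\snorm{H^2(\T)}{\Ph g - g} \le C h^2 \snorm{H^4(\T)}{g}$, I would invoke the best-approximation property coming from~\eqref{sp-orth}: it suffices to exhibit a single competitor $s \in \VV$ with $\snorm{H^2(\T)}{g - s} \le C h^2 \snorm{H^4(\T)}{g}$. Such a spline can be produced by a standard quasi-interpolation (for instance a B-spline quasi-interpolant reproducing cubics), whose $H^2$-seminorm accuracy is obtained locally by a Bramble--Hilbert argument on each mesh cell. The first inequality then follows immediately by composing the second and third estimates: $\norm{L^2(\T)}{\Ph g - g} \le C h^2 \snorm{H^2(\T)}{\Ph g - g} \le C h^4 \snorm{H^4(\T)}{g}$.

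The hard part will be the second inequality. Unlike the cubic Hermite interpolation, which is local on each cell and therefore admits a direct cell-by-cell error analysis, the spline operator $\Ph$ is global (its coefficients solve a coupled periodic system), so one cannot estimate $\Ph g - g$ directly by a Bramble--Hilbert argument. Identity~\eqref{sp-orth} is exactly what converts the global interpolation error into a best-approximation error, after which only the \emph{existence} of one good local competitor $s \in \VV$ is needed; constructing that competitor (or, equivalently, citing the approximation power of cubic splines as in Theorems~7 and~8 of~\cite{MSRV67}) is the only genuinely nontrivial ingredient.
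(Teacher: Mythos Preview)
The paper does not supply its own proof of this lemma: it is stated with attribution to Theorems~7 and~8 of Schultz--Varga~\cite{MSRV67} and then used, exactly as the Hermite analogues (Lemmas~\ref{lem:int-1} and~\ref{lem:int-2}) are cited without proof from~\cite{GBMSRV68,JGTHJL06}. So there is no ``paper's proof'' to compare against.

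Your proposed argument is nonetheless a correct self-contained proof. The orthogonality identity~\eqref{sp-orth} you establish is precisely Lemma~\ref{lem:sp-2} (also quoted from~\cite{MSRV67} without proof), and your derivation of it via two integrations by parts is the standard one. The Poincar\'e argument for the third inequality is fine once one notes that $\|De\|_{L^2(x_j,x_{j+1})}^2 = -\int e\,D^2 e \le \|e\|\,\|D^2 e\|$ combined with the Dirichlet Poincar\'e bound $\|e\| \le (h_j/\pi)\|De\|$ indeed yields $\|e\| \le (h_j/\pi)^2 \|D^2 e\|$. For the second inequality, your reduction to best approximation is correct; the only point to be careful about is that the competitor must lie in $\VV$ (periodic $C^2$ piecewise cubics), so the Hermite interpolant $\I g \in \V$ is not admissible and a genuine $C^2$ quasi-interpolant---or the construction in~\cite{MSRV67}---is required, as you already note.
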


\begin{lemma}[Theorem 4 of~\cite{MSRV67}]\label{lem:sp-2} 
  For any $ f,g \in H^2(\T) $, the following equality holds: 
  \begin{align}
    \int_\T D^2 \left( f(x) - (\Ph f)(x)\right) \cdot D^2 \left(\Ph g\right)(x) dx = 0, 
  \end{align}
  which implies 
  \begin{align}
    \snorm{H^2(\T)}{f - \Ph f + \Ph g}^2 = \snorm{H^2(\T)}{f - \Ph f}^2 + \snorm{H^2(\T)}{\Ph g}^2. 
  \end{align} 
\end{lemma}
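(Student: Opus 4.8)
The plan is to mirror the proof of Lemma~\ref{lem:int-2} for the Hermite operator, exploiting that the two factors play complementary roles: the interpolation error $f - \Ph f$ vanishes at every node, while the spline $\Ph g$ is globally $C^2$, so its second derivative is continuous across the nodes. First I would set $w = f - \Ph f$ and $s = \Ph g$. By the Sobolev embedding $H^2(\T) \hookrightarrow C^1(\T)$ and by $\VV \subset C^2(\T)$, we have $w \in C^1(\T)$ and $s \in C^2(\T)$; moreover $w(x_j) = 0$ for every $j$ thanks to the interpolation condition $(\Ph f)(x_j) = f(x_j)$.

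To establish the orthogonality identity, I would split the integral over the torus as $\int_\T D^2 w \cdot D^2 s\, dx = \sum_j \int_{x_j}^{x_{j+1}} D^2 w \cdot D^2 s\, dx$ and integrate by parts once on each subinterval, obtaining the boundary terms $[\, Dw \cdot D^2 s\,]_{x_j}^{x_{j+1}}$ together with $-\int_{x_j}^{x_{j+1}} Dw \cdot D^3 s\, dx$. Summed over $j$, the boundary contributions telescope around $\T$ and cancel by periodicity, since both $Dw$ (because $w \in C^1$) and $D^2 s$ (because $s \in C^2$) are continuous at every node. For the remaining integral I would use that $s$ is a single cubic polynomial on each $[x_j, x_{j+1}]$, so $D^3 s$ is constant there; hence $\int_{x_j}^{x_{j+1}} Dw \cdot D^3 s\, dx = D^3 s \cdot [\,w\,]_{x_j}^{x_{j+1}}$, which is zero because $w(x_j) = w(x_{j+1}) = 0$. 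Therefore the cross integral $\int_\T D^2(f - \Ph f) \cdot D^2(\Ph g)\, dx$ vanishes.

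The Pythagorean identity then follows routinely: expanding $\snorm{H^2(\T)}{f - \Ph f + \Ph g}^2 = \int_\T \bigl( D^2(f - \Ph f) + D^2(\Ph g) \bigr)^2 dx$ yields $\snorm{H^2(\T)}{f - \Ph f}^2 + \snorm{H^2(\T)}{\Ph g}^2$ plus twice the cross integral, and the orthogonality just proved annihilates the cross term.

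I expect the only delicate point to be the justification of the boundary-term cancellation in the integration by parts. In contrast with the Hermite case of Lemma~\ref{lem:int-2}, where $Dw$ itself vanishes at the nodes, here $Dw(x_j)$ need not be zero, so the cancellation relies genuinely on the $C^2$-regularity of the spline (continuity of $D^2 s$) combined with periodicity, rather than on pointwise vanishing of $Dw$. Verifying that $D^2 s$ and $Dw$ are indeed continuous across each node, so that the telescoping sum closes up on $\T$, is the crux; everything else is elementary.
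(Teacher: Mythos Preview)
Your argument is correct. The paper does not supply its own proof of this lemma; it simply attributes the result to Theorem~4 of~\cite{MSRV67}. What you wrote is essentially the classical argument from that reference: integrate by parts once on each subinterval, use the $C^2$-continuity of the spline $s = \Ph g$ together with the $C^1$-continuity of $w = f - \Ph f$ (from $H^2 \hookrightarrow C^1$) to telescope the boundary terms around $\T$, and then use that $D^3 s$ is piecewise constant so that the remaining integral reduces to $D^3 s \cdot (w(x_{j+1}) - w(x_j)) = 0$ by the nodal interpolation condition. Your identification of the key distinction from the Hermite case in Lemma~\ref{lem:int-2} is also accurate: there the vanishing of $Dw$ at the nodes does the work, whereas here it is the extra $C^2$-regularity of the spline that makes the boundary terms cancel.
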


\subsection{Mathematical analysis of semi-Lagrangian scheme with the spline interpolation}
We investigate the stability and convergence of the scheme provided in Lemma~\ref{lem:spline-continuous-scheme}. We can prove the following lemma in the same manner as Lemma~\ref{lem:stability-hermite-interpolation} using Lemmas~\ref{lem:sp-1} and~\ref{lem:sp-2} instead of Lemmas~\ref{lem:int-1} and~\ref{lem:int-2}. Here we recall that the weighted $ H^2 $ norm $ \norm{H_{h,\Delta t}^2(\T)}{\cdot} $ is defined by~\eqref{define-weighted-H2-norm}. 

\begin{lemma}\label{lem:spline-stability}
  There exists a positive constant $ C $ such that 
  \begin{gather}
    \begin{aligned}
      \norm{H_{h,\Delta t}^2(\T)}{ \Ph g } & \leq (1 + C \Delta t) \norm{H_{h,\Delta t}^2(\T)}{g}, & g & \in H^2(\T). 
    \end{aligned}
  \end{gather}
\end{lemma}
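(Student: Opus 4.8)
The plan is to mimic the proof of Lemma~\ref{lem:stability-hermite-interpolation} verbatim, replacing the cubic Hermite interpolation operator $\I$ with the cubic spline interpolation operator $\Ph$ and invoking the spline analogues of the two key properties. Concretely, the statement to be proven is
\begin{gather}
  \norm{H_{h,\Delta t}^2(\T)}{\Ph g} \leq (1 + C \Delta t) \norm{H_{h,\Delta t}^2(\T)}{g}, \quad g \in H^2(\T),
\end{gather}
and the proof of Lemma~\ref{lem:stability-hermite-interpolation} uses exactly three ingredients: the elementary inequality~\eqref{tri}, the inverse-type estimate $\norm{L^2(\T)}{\Ph g - g} \leq C h^2 \snorm{H^2(\T)}{\Ph g - g}$ (third inequality of Lemma~\ref{lem:sp-1}), and the orthogonality relation $\snorm{H^2(\T)}{g - \Ph g + \Ph g}^2 = \snorm{H^2(\T)}{g - \Ph g}^2 + \snorm{H^2(\T)}{\Ph g}^2$ (Lemma~\ref{lem:sp-2}). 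Since all three ingredients are available for the spline operator, the argument transfers without modification.

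First I would expand $\norm{H_{h,\Delta t}^2(\T)}{\Ph g}^2 = \norm{L^2(\T)}{g + (\Ph g - g)}^2 + \frac{h^4}{\Delta t} \snorm{H^2(\T)}{\Ph g}^2$ and apply~\eqref{tri} with $a = \norm{L^2(\T)}{g}$, $b = \norm{L^2(\T)}{\Ph g - g}$ and $d = \Delta t$ to the first term, obtaining an upper bound
\begin{gather}
  (1 + \Delta t) \norm{L^2(\T)}{g}^2 + \Bigl(1 + \tfrac{1}{\Delta t}\Bigr) \norm{L^2(\T)}{\Ph g - g}^2 + \frac{h^4}{\Delta t} \snorm{H^2(\T)}{\Ph g}^2.
\end{gather}
Next I would bound the middle term using the third inequality of Lemma~\ref{lem:sp-1}, giving the factor $h^4$ and the seminorm $\snorm{H^2(\T)}{\Ph g - g}^2$, and then invoke the Pythagorean identity of Lemma~\ref{lem:sp-2} (applied with $f = g$) to combine $\snorm{H^2(\T)}{\Ph g - g}^2 + \snorm{H^2(\T)}{\Ph g}^2 = \snorm{H^2(\T)}{g}^2$. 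This collapses the $H^2$-seminorm contributions into the single term $(1 + \Delta t) \frac{h^4}{\Delta t} \snorm{H^2(\T)}{g}^2$, which together with the $L^2$ term yields precisely $(1 + \Delta t) \norm{H_{h,\Delta t}^2(\T)}{g}^2$, and taking square roots completes the proof.

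I do not anticipate any genuine obstacle here, since the proof is a direct translation. The only thing to verify carefully is that the spline operator $\Ph$ genuinely enjoys all the properties used: it is well-defined on $H^2(\T) \subset C^0(\T)$ (so that $\Ph g$ makes sense and lies in $\VV \subset H^2(\T)$), the inverse estimate and the orthogonality relation of Lemmas~\ref{lem:sp-1} and~\ref{lem:sp-2} both hold for all $f, g \in H^2(\T)$ with the same constants. All of these are guaranteed by the cited results of~\cite{MSRV67}, so the argument is complete. Accordingly the proof can simply read: ``The proof is identical to that of Lemma~\ref{lem:stability-hermite-interpolation}, with $\I$ replaced by $\Ph$ and Lemmas~\ref{lem:int-1} and~\ref{lem:int-2} replaced by Lemmas~\ref{lem:sp-1} and~\ref{lem:sp-2}.''
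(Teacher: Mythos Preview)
Your proposal is correct and matches the paper's approach exactly: the paper does not give a separate proof of Lemma~\ref{lem:spline-stability} but states just before it that the lemma is proved ``in the same manner as Lemma~\ref{lem:stability-hermite-interpolation} using Lemmas~\ref{lem:sp-1} and~\ref{lem:sp-2} instead of Lemmas~\ref{lem:int-1} and~\ref{lem:int-2}.'' Your detailed expansion of this argument is accurate and complete.
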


From the lemma above, we see that the scheme with the spline interpolation is stable with respect to the weighted $ H^2 $ norm. 

\begin{lemma}\label{lem:stability-spline-interpolation}
  Suppose $ u_{xx} \in C(\T) $. 
  Under Assumptions~\ref{ass:H2} and~\ref{ass:time}, there exists a positive constant $ C $ such that for any $ g \in H^2(\T) $
  \begin{align}
    \norm{H_{h,\Delta t}^2(\T)}{\I (g \circ X^n)} \leq (1 + C \Delta t_n) \norm{H_{h,\Delta t}^2(\T)}{g} 
  \end{align}
  when $ \Delta t $ is sufficiently small. 
\end{lemma}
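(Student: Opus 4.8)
The plan is to reproduce verbatim the two-step argument by which Lemma~\ref{lem:stability-CIP} was obtained, since this statement is exactly its spline counterpart. As in~\eqref{two-steps}, the time evolution over $[t^n,t^{n+1}]$ factors as $g \mapsto g \circ X^n \mapsto \Ph(g \circ X^n)$, so I would estimate the advective step and the interpolation step separately in the weighted $H^2$ norm and then compose. The essential observation is that each factor is already available: the advective estimate does not depend on which interpolation operator is used, and the spline interpolation estimate has been established in Lemma~\ref{lem:spline-stability}.

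First I would invoke Lemma~\ref{lem:stability-advection} to control the advective step. This lemma is formulated purely in terms of the map $X^n$ and the bounds of Lemma~\ref{lem:bound}, with no reference to $\I$ or $\Ph$; hence it applies here without modification and yields, under Assumption~\ref{ass:H2},
\begin{align}
  \norm{H_{h,\Delta t}^2(\T)}{g \circ X^n} \leq (1 + C \Delta t_n) \norm{H_{h,\Delta t}^2(\T)}{g}, \quad g \in H^2(\T).
\end{align}
Next I would apply Lemma~\ref{lem:spline-stability} to the interpolation step, obtaining $\norm{H_{h,\Delta t}^2(\T)}{\Ph(g \circ X^n)} \leq (1 + C \Delta t) \norm{H_{h,\Delta t}^2(\T)}{g \circ X^n}$. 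Composing the two bounds produces a growth factor of the form $(1 + C \Delta t)(1 + C \Delta t_n)$.

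Finally I would reconcile the two time-step scales: Lemma~\ref{lem:spline-stability} is phrased with the global mesh size $\Delta t$, whereas the target inequality carries $\Delta t_n$. Here Assumption~\ref{ass:time} enters, giving $\Delta t \leq C_2 \Delta t^\prime \leq C_2 \Delta t_n$, so that the product collapses to $(1 + C \Delta t_n)^2 \leq 1 + C^\prime \Delta t_n$ once $\Delta t$ is sufficiently small. I do not expect a genuine obstacle in this argument: all of the analytic content has been absorbed into Lemmas~\ref{lem:stability-advection} and~\ref{lem:spline-stability}, and the only care required is the bookkeeping of the quasi-uniformity conversion, precisely mirroring the Hermite case. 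It is worth remarking that the spline projection identity of Lemma~\ref{lem:sp-2} (rather than Lemma~\ref{lem:int-2}) is what makes Lemma~\ref{lem:spline-stability} hold, so no new structural property of $\Ph$ beyond those two lemmas is needed.
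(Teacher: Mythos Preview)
Your proposal is correct and follows essentially the same route as the paper: chain Lemma~\ref{lem:spline-stability} (interpolation stability) with Lemma~\ref{lem:stability-advection} (advection stability), then use Assumption~\ref{ass:time} to convert the resulting $(1+C\Delta t)$ factor into $(1+C\Delta t_n)$. If anything, you are more explicit than the paper about the quasi-uniformity step, which the paper leaves implicit when passing from $(1+C\Delta t)^2$ to the stated bound.
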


\begin{proof}
  We have for any $ g \in H^2(\T) $
  \begin{align}
    \norm{H_{h,\Delta t}^2(\T)}{\I (g \circ X^n)}
    & \leq (1 + C \Delta t) \norm{H_{h,\Delta t}^2(\T)}{g \circ X^n} \\
    & \leq (1 + C \Delta t)^2 \norm{H_{h,\Delta t}^2(\T)}{g}, 
  \end{align}
  where the first and second inequality follow from Lemmas~\ref{lem:stability-spline-interpolation} and~\ref{lem:stability-advection} respectively. 
\end{proof}

We can prove the convergence with respect to the weighted $ H^2 $ norm
in the same manner as Theorem~\ref{thm:1-cnv-wH2}. 

\begin{theorem}\label{thm:3-cnv-wH2}
  We denote the numerical solutions obtained from~\eqref{scheme-3} by $ \{\tilde{\varphi}_h^n\}_{n=0}^N $. 
  Suppose $ u_{xx} \in C^3(\T) $. Under Assumptions~\ref{ass:H2},~\ref{ass:time} and~\ref{ass:phi}, there exists a positive constant $ C $ such that for $ n = 0,1,\ldots,N $
  \begin{gather}
      \norm{H_{h,\Delta t}^2(\T)}{\varphi^n - \tilde{\varphi}_h^n} \leq C \left( \Delta t^3 + \frac{h^4}{\Delta t} \right) {\norm{l^2(H^4)}{\varphi}}
  \end{gather}
  when $ \Delta t $ is sufficiently small. 
\end{theorem}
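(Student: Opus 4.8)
The plan is to replay the proof of Theorem~\ref{thm:1-cnv-wH2} essentially verbatim, replacing the cubic Hermite interpolation operator $\I$ everywhere by the cubic spline interpolation operator $\Ph$ and invoking the spline-specific lemmas in place of their Hermite counterparts. The whole Hermite argument rested on exactly three structural features of $\I$: the interpolation-error bounds of Lemma~\ref{lem:int-1}, the $L^2$-orthogonality $\int_\T D^2(f-\I f)\,D^2(\I g)\,dx=0$ of Lemma~\ref{lem:int-2}, and stability in the weighted $H^2$ norm (Lemmas~\ref{lem:stability-hermite-interpolation} and~\ref{lem:stability-CIP}). The operator $\Ph$ possesses all three—furnished respectively by Lemma~\ref{lem:sp-1}, Lemma~\ref{lem:sp-2}, and Lemmas~\ref{lem:spline-stability} and~\ref{lem:stability-spline-interpolation}—so no step demands a genuinely new idea.

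Concretely, I would first set $\tilde{e}^n = \varphi^n - \tilde{\varphi}_h^n$ and reproduce the error recursion~\eqref{cnv-3-1}, now using Lemma~\ref{lem:spline-continuous-scheme} in place of Lemma~\ref{lem:scheme-continuous} together with the exact evolution~\eqref{exact-time-evolution-0}, to obtain
\[
  \tilde{e}^{n+1} = \Ph\left(\tilde{e}^n \circ X^n + \varphi^n \circ \xi^n - \varphi^n \circ X^n\right) + (I - \Ph)\varphi^{n+1}.
\]
Applying the orthogonality relation of Lemma~\ref{lem:sp-2} splits $\snorm{H^2(\T)}{\tilde{e}^{n+1}}^2$ into the seminorm of the $\Ph(\cdots)$ term and that of $(I-\Ph)\varphi^{n+1}$, exactly as in~\eqref{cnv-3-2}, while~\eqref{tri} controls the $L^2$ part as in~\eqref{cnv-3-3}. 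A further use of~\eqref{tri} to separate the transported error $\Ph(\tilde{e}^n\circ X^n)$ from the consistency term $\Ph(\varphi^n\circ\xi^n - \varphi^n\circ X^n)$ then reproduces~\eqref{cnv-3-4} with $\Ph$ in place of $\I$.

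Next I would record the spline analogue of Lemma~\ref{lem:truncation-error-2}, namely
\[
  \norm{H_{h,\Delta t}^2(\T)}{\Ph\left(g\circ\xi^n - g\circ X^n\right)} \leq C\left(\Delta t^4 + h^{16}\right)\norm{H^3(\T)}{g},
\]
which follows by the identical computation: Lemma~\ref{lem:composition} bounds $g\circ\xi^n - g\circ X^n$ in the relevant (semi)norms without reference to any interpolation operator, and the weighted $H^2$ stability of $\Ph$ from Lemma~\ref{lem:spline-stability} plays the role that Lemma~\ref{lem:stability-hermite-interpolation} played for $\I$. Feeding this estimate, the interpolation-error bounds of Lemma~\ref{lem:sp-1} (in place of~\eqref{cnv-3-7}), and the full-scheme stability of Lemma~\ref{lem:stability-spline-interpolation} into the recursion gives the one-step inequality matching~\eqref{cnv-3-8}; bounding the initial error $\tilde{e}^0 = (I-\Ph)\varphi_0$ through Lemma~\ref{lem:sp-1} as in~\eqref{cnv-3-9} and closing with the discrete Gronwall summation then yields the claimed bound.

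I do not expect a substantive obstacle, precisely because $\Ph$ has been endowed in advance with every property the Hermite proof exploited; the work is almost entirely a matter of transcription. The only points warranting genuine care are verificational: confirming that Lemma~\ref{lem:composition}, which quantifies the Runge--Kutta error $\xi^n - X^n$ and is wholly independent of the interpolation, transfers unchanged (it does), and checking that the Pythagorean splitting of the $H^2$ seminorm via Lemma~\ref{lem:sp-2} remains valid on the periodic torus for the globally-coupled spline operator. A mild simplification even appears here, since $\Ph$ acts on $C^0(\T)$ rather than $C^1(\T)$ and produces $C^2$ rather than merely $C^1$ functions, so no separate treatment of derivative data is required.
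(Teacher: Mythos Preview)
Your proposal is correct and matches the paper's own treatment exactly: the paper does not give a separate proof but simply states that Theorem~\ref{thm:3-cnv-wH2} is proved ``in the same manner as Theorem~\ref{thm:1-cnv-wH2}'', relying on Lemmas~\ref{lem:sp-1}, \ref{lem:sp-2}, \ref{lem:spline-stability}, and \ref{lem:stability-spline-interpolation} in place of their Hermite analogues. Your outline carries this transcription out in full detail and identifies precisely the substitutions needed.
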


We also deduce corollaries related to the $ L^2 $, $ H^1 $ and $ H^2 $ norms in the same manner as Corollaries~\ref{cor:1-cnv-H0} and~\ref{cor:1-cnv-H1}. 

\begin{corollary}\label{cor:3-cnv-H0H1}
  Suppose $ u_{xx} \in C^3(\T) $. Under Assumptions~\ref{ass:H2},~\ref{ass:time} and~\ref{ass:phi}, there exists a positive constant $ C $ such that for $ n = 0,1,\ldots,N $
  \begin{gather}
    \begin{aligned}
      \norm{L^2(\T)}{\varphi^n-\tilde{\varphi}_h^n} & \leq C \left( \Delta t^3 + \frac{h^4}{\Delta t} \right)  {\norm{l^2(H^4)}{\varphi}}, \\
      \norm{H^1(\T)}{\varphi^n-\tilde{\varphi}_h^n} & \leq C \left( \frac{\Delta t^{13/4}}{h} + \frac{h^{13/4}}{\Delta t} \right)  {\norm{l^2(H^4)}{\varphi}}, \\
      \norm{H^2(\T)}{\varphi^n-\tilde{\varphi}_h^n} & \leq C \left( \frac{\Delta t^{7/2}}{h^2} + \frac{h^{5/2}}{\Delta t} \right)  {\norm{l^2(H^4)}{\varphi}}
    \end{aligned}
  \end{gather}
  when $ \Delta t $ is sufficiently small. 
\end{corollary}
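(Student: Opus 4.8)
The plan is to follow closely the arguments used for the cubic Hermite scheme in Corollaries~\ref{cor:1-cnv-H0} and~\ref{cor:1-cnv-H1}, with Theorem~\ref{thm:3-cnv-wH2} playing the role of Theorem~\ref{thm:1-cnv-wH2}. This transfer is legitimate because the only ingredients those proofs used were the weighted $H^2$ estimate for the error together with elementary manipulations of the weighted norm~\eqref{define-weighted-H2-norm} and integration by parts; none of them referred to the interpolation operator itself, so the operator $\Ph$ enters only through the already-established Theorem~\ref{thm:3-cnv-wH2} (whose proof in turn rests on Lemmas~\ref{lem:sp-1}, \ref{lem:sp-2} and the operator-independent Lemma~\ref{lem:stability-advection}). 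Throughout I write $\tilde{e}^n := \varphi^n - \tilde{\varphi}_h^n$.

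For the $L^2$ bound I would simply note that $\norm{L^2(\T)}{\tilde{e}^n} \le \norm{H_{h,\Delta t}^2(\T)}{\tilde{e}^n}$ directly from the definition~\eqref{define-weighted-H2-norm}, so the first inequality is an immediate consequence of Theorem~\ref{thm:3-cnv-wH2}. For the $H^2$ bound I would extract the second-derivative part of the weighted norm, $\snorm{H^2(\T)}{\tilde{e}^n} \le \frac{\Delta t^{1/2}}{h^2}\norm{H_{h,\Delta t}^2(\T)}{\tilde{e}^n}$, insert the estimate of Theorem~\ref{thm:3-cnv-wH2}, and obtain $\snorm{H^2(\T)}{\tilde{e}^n} \le C\bigl(\tfrac{\Delta t^{7/2}}{h^2} + \tfrac{h^2}{\Delta t^{1/2}}\bigr)\norm{l^2(H^4)}{\varphi}$. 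The middle term is then absorbed, since $\tfrac{h^2}{\Delta t^{1/2}}$ is bounded by $\tfrac{\Delta t^{7/2}}{h^2}$ when $h\le\Delta t$ and by $\tfrac{h^{5/2}}{\Delta t}$ when $\Delta t\le h$; the lower-order pieces of the full $H^2$ norm are dominated by the already-proved $L^2$ bound and by this seminorm bound under Assumption~\ref{ass:H2}, yielding the stated $H^2$ estimate.

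For the $H^1$ bound I would use periodic integration by parts exactly as in the proof of Corollary~\ref{cor:1-cnv-H1}, namely $\norm{L^2(\T)}{D\tilde{e}^n}^2 = -\int_\T \tilde{e}^n\,D^2\tilde{e}^n\,dx \le \norm{L^2(\T)}{\tilde{e}^n}\,\norm{L^2(\T)}{D^2\tilde{e}^n}$, and then take the geometric mean of the $L^2$ and $H^2$ estimates just derived. The only point requiring care --- and hence the main obstacle, though it is purely algebraic --- is checking that the two cross terms produced by $\bigl(\Delta t^3+\tfrac{h^4}{\Delta t}\bigr)^{1/2}\bigl(\tfrac{\Delta t^{7/2}}{h^2}+\tfrac{h^{5/2}}{\Delta t}\bigr)^{1/2}$ are controlled by $\tfrac{\Delta t^{13/4}}{h}+\tfrac{h^{13/4}}{\Delta t}$. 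Writing the four products as $A,B,C,D$, this reduces to the identity $(\sqrt{AC}+\sqrt{BD})^2-(\sqrt{AD}+\sqrt{BC})^2=(A-B)(C-D)\ge 0$, which holds because $A-B$ and $C-D$ change sign together according to whether $h\le\Delta t$ or $\Delta t\le h$. With that bookkeeping the stated $H^1$ and $H^2$ estimates follow, and no new analytic input is needed.
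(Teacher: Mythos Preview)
Your proposal is correct and follows essentially the same approach as the paper, which simply states that Corollary~\ref{cor:3-cnv-H0H1} is deduced ``in the same manner as Corollaries~\ref{cor:1-cnv-H0} and~\ref{cor:1-cnv-H1}'' from Theorem~\ref{thm:3-cnv-wH2}. Your treatment is in fact somewhat more explicit than the paper's: the algebraic verification that the cross terms $\sqrt{AD}+\sqrt{BC}$ are dominated by $\sqrt{AC}+\sqrt{BD}$ via the identity $(\sqrt{AC}+\sqrt{BD})^2-(\sqrt{AD}+\sqrt{BC})^2=(A-B)(C-D)\ge 0$, and the case split showing $h^2/\Delta t^{1/2}$ is absorbed into $\Delta t^{7/2}/h^2+h^{5/2}/\Delta t$, are stated without justification in the proof of Corollary~\ref{cor:1-cnv-H1}.
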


\section{Numerical results}\label{sec:experiment}

In this section, we present numerical results for the CIP scheme and the semi-Lagrangian scheme with the cubic spline interpolation, focusing on norm errors and phase errors. As we see below, these two schemes exhibit almost the same accuracy with respect to norm-based errors. However, the CIP scheme is more accurate in terms of phase errors, particularly regarding advection of high-frequency components. 

\subsection{Norm-based errors}
\begin{table}[tbhp]
  \caption{Convergence rate of the CIP scheme.}\label{table:CIP}
  \begin{minipage}[c]{\columnwidth}
    \centering
    \subcaption{When $ \Delta t = h $. }\label{table:CIP-1}
    \begin{tabular}{rccccccccc}
      \hline
      \multirow{2}*{$ M $} & \multirow{2}*{$ h $} & \multirow{2}*{$ L^2 $ error} & \multirow{2}*{rate} & \multirow{2}*{$ H^1 $ error} & \multirow{2}*{rate} & \multirow{2}*{$ H^2 $ error} & \multirow{2}*{rate}  & \multirow{2}*{weighted $ H^2 $} & \multirow{2}*{rate} \\  [-4pt]
      & & & & & & \\ \hline 
      \multirow{2}*{$80$} & \multirow{2}*{$\frac{1}{80}$} & \multirow{2}*{3.354e-04} &  & \multirow{2}*{1.481e-03} &  & \multirow{2}*{5.991e-03} &  & \multirow{2}*{3.355e-04} & \\ [-4pt] 
      & & & \multirow{2}*{2.944} &  & \multirow{2}*{2.934} &  & \multirow{2}*{2.155} &  & \multirow{2}*{2.944} \\ [-4pt] 
       \multirow{2}*{$160$} & \multirow{2}*{$\frac{1}{160}$} & \multirow{2}*{4.359e-05} &  & \multirow{2}*{1.937e-04} &  & \multirow{2}*{1.346e-03} &  & \multirow{2}*{4.359e-05} & \\ [-4pt] 
      & & & \multirow{2}*{2.978} &  & \multirow{2}*{2.975} &  & \multirow{2}*{2.039} &  & \multirow{2}*{2.978} \\ [-4pt] 
       \multirow{2}*{$320$} & \multirow{2}*{$\frac{1}{320}$} & \multirow{2}*{5.534e-06} &  & \multirow{2}*{2.463e-05} &  & \multirow{2}*{3.273e-04} &  & \multirow{2}*{5.534e-06} & \\ [-4pt] 
      & & & \multirow{2}*{2.990} &  & \multirow{2}*{2.990} &  & \multirow{2}*{1.999} &  & \multirow{2}*{2.990} \\ [-4pt] 
       \multirow{2}*{$640$} & \multirow{2}*{$\frac{1}{640}$} & \multirow{2}*{6.965e-07} &  & \multirow{2}*{3.101e-06} &  & \multirow{2}*{8.190e-05} &  & \multirow{2}*{6.965e-07} & \\ [-4pt] 
      & & & \multirow{2}*{2.995} &  & \multirow{2}*{2.995} &  & \multirow{2}*{1.989} &  & \multirow{2}*{2.995} \\ [-4pt] 
       \multirow{2}*{$1280$} & \multirow{2}*{$\frac{1}{1280}$} & \multirow{2}*{8.735e-08} &  & \multirow{2}*{3.889e-07} &  & \multirow{2}*{2.063e-05} &  & \multirow{2}*{8.735e-08} & \\ [-4pt] 
      & & & & & & \\ \hline
    \end{tabular} 
    \vspace{5pt}
  \end{minipage}
  \begin{minipage}[c]{\columnwidth}
    \centering
    \subcaption{When $ \Delta t = 10^{-4} $. }\label{table:CIP-2}
    \begin{tabular}{rccccccccc}
      \hline
      \multirow{2}*{$ M $} & \multirow{2}*{$ h $} & \multirow{2}*{$ L^2 $ error} & \multirow{2}*{rate} & \multirow{2}*{$ H^1 $ error} & \multirow{2}*{rate} & \multirow{2}*{$ H^2 $ error} & \multirow{2}*{rate}  & \multirow{2}*{weighted $ H^2 $} & \multirow{2}*{rate} \\  [-4pt]
      & & & & & & \\ \hline 
      \multirow{2}*{$80$} & \multirow{2}*{$\frac{1}{80}$} & \multirow{2}*{5.009e-04} &  & \multirow{2}*{2.201e-03} &  & \multirow{2}*{7.869e-03} &  & \multirow{2}*{5.157e-04} & \\ [-4pt] 
      & & & \multirow{2}*{2.941} &  & \multirow{2}*{2.930} &  & \multirow{2}*{2.224} &  & \multirow{2}*{2.976} \\ [-4pt] 
      \multirow{2}*{$160$} & \multirow{2}*{$\frac{1}{160}$} & \multirow{2}*{6.521e-05} &  & \multirow{2}*{2.887e-04} &  & \multirow{2}*{1.685e-03} &  & \multirow{2}*{6.554e-05} & \\ [-4pt] 
      & & & \multirow{2}*{2.986} &  & \multirow{2}*{2.984} &  & \multirow{2}*{2.078} &  & \multirow{2}*{2.992} \\ [-4pt] 
      \multirow{2}*{$320$} & \multirow{2}*{$\frac{1}{320}$} & \multirow{2}*{8.229e-06} &  & \multirow{2}*{3.650e-05} &  & \multirow{2}*{3.991e-04} &  & \multirow{2}*{8.239e-06} & \\ [-4pt] 
      & & & \multirow{2}*{3.010} &  & \multirow{2}*{3.009} &  & \multirow{2}*{2.021} &  & \multirow{2}*{3.011} \\ [-4pt] 
      \multirow{2}*{$640$} & \multirow{2}*{$\frac{1}{640}$} & \multirow{2}*{1.022e-06} &  & \multirow{2}*{4.534e-06} &  & \multirow{2}*{9.829e-05} &  & \multirow{2}*{1.022e-06} & \\ [-4pt] 
      & & & \multirow{2}*{3.034} &  & \multirow{2}*{3.034} &  & \multirow{2}*{2.014} &  & \multirow{2}*{3.034} \\ [-4pt] 
      \multirow{2}*{$1280$} & \multirow{2}*{$\frac{1}{1280}$} & \multirow{2}*{1.247e-07} &  & \multirow{2}*{5.537e-07} &  & \multirow{2}*{2.434e-05} &  & \multirow{2}*{1.247e-07} & \\ [-4pt] 
      & & & & & & \\ \hline
    \end{tabular} 
    \vspace{5pt}
  \end{minipage}
  \begin{minipage}[c]{\columnwidth}
    \centering
    \subcaption{When $ h = 10^{-4} $. }\label{table:CIP-3}
    \begin{tabular}{rccccccccc}
      \hline
      \multirow{2}*{$ N $} & \multirow{2}*{$ \Delta t $} & \multirow{2}*{$ L^2 $ error} & \multirow{2}*{rate} & \multirow{2}*{$ H^1 $ error} & \multirow{2}*{rate} & \multirow{2}*{$ H^2 $ error} & \multirow{2}*{rate}  & \multirow{2}*{weighted $ H^2 $} & \multirow{2}*{rate} \\  [-4pt]
      & & & & & & \\ \hline 
      \multirow{2}*{$80$} & \multirow{2}*{$\frac{1}{80}$} & \multirow{2}*{1.423e-06} &  & \multirow{2}*{3.893e-06} &  & \multirow{2}*{6.337e-06} &  & \multirow{2}*{1.423e-06} & \\ [-4pt] 
      & & & \multirow{2}*{3.079} &  & \multirow{2}*{3.058} &  & \multirow{2}*{2.961} &  & \multirow{2}*{3.079} \\ [-4pt] 
      \multirow{2}*{$160$} & \multirow{2}*{$\frac{1}{160}$} & \multirow{2}*{1.684e-07} &  & \multirow{2}*{4.673e-07} &  & \multirow{2}*{8.139e-07} &  & \multirow{2}*{1.684e-07} & \\ [-4pt] 
      & & & \multirow{2}*{3.030} &  & \multirow{2}*{3.023} &  & \multirow{2}*{1.350} &  & \multirow{2}*{3.030} \\ [-4pt] 
      \multirow{2}*{$320$} & \multirow{2}*{$\frac{1}{320}$} & \multirow{2}*{2.062e-08} &  & \multirow{2}*{5.749e-08} &  & \multirow{2}*{3.192e-07} &  & \multirow{2}*{2.062e-08} & \\ [-4pt] 
      & & & \multirow{2}*{3.016} &  & \multirow{2}*{3.017} &  & \multirow{2}*{-0.027} &  & \multirow{2}*{3.016} \\ [-4pt] 
      \multirow{2}*{$640$} & \multirow{2}*{$\frac{1}{640}$} & \multirow{2}*{2.549e-09} &  & \multirow{2}*{7.101e-09} &  & \multirow{2}*{3.253e-07} &  & \multirow{2}*{2.549e-09} & \\ [-4pt] 
      & & & \multirow{2}*{3.074} &  & \multirow{2}*{3.119} &  & \multirow{2}*{-0.065} &  & \multirow{2}*{3.074} \\ [-4pt] 
      \multirow{2}*{$1280$} & \multirow{2}*{$\frac{1}{1280}$} & \multirow{2}*{3.026e-10} &  & \multirow{2}*{8.173e-10} &  & \multirow{2}*{3.403e-07} &  & \multirow{2}*{3.026e-10} & \\ [-4pt] 
      & & & & & & \\ \hline
    \end{tabular} 
    \vspace{5pt}
  \end{minipage}
\end{table}
\begin{table}[tbhp]
  \caption{Convergence rate of the semi-Lagrangian scheme with the cubic spline interpolation.}\label{table:spline}
  \begin{minipage}[c]{\columnwidth}
    \centering
    \subcaption{When $ h = \Delta t $. }\label{table:spline-1}
    \begin{tabular}{rccccccccc}
      \hline
      \multirow{2}*{$ M $} & \multirow{2}*{$ h $} & \multirow{2}*{$ L^2 $ error} & \multirow{2}*{rate} & \multirow{2}*{$ H^1 $ error} & \multirow{2}*{rate} & \multirow{2}*{$ H^2 $ error} & \multirow{2}*{rate}  & \multirow{2}*{weighted $ H^2 $} & \multirow{2}*{rate} \\  [-4pt]
      & & & & & & \\ \hline 
      \multirow{2}*{$80$} & \multirow{2}*{$\frac{1}{80}$} & \multirow{2}*{2.254e-04} &  & \multirow{2}*{1.028e-03} &  & \multirow{2}*{4.748e-03} &  & \multirow{2}*{2.255e-04} & \\ [-4pt] 
      & & & \multirow{2}*{3.103} &  & \multirow{2}*{3.124} &  & \multirow{2}*{2.188} &  & \multirow{2}*{3.103} \\ [-4pt] 
      \multirow{2}*{$160$} & \multirow{2}*{$\frac{1}{160}$} & \multirow{2}*{2.624e-05} &  & \multirow{2}*{1.179e-04} &  & \multirow{2}*{1.042e-03} &  & \multirow{2}*{2.625e-05} & \\ [-4pt] 
      & & & \multirow{2}*{3.028} &  & \multirow{2}*{3.033} &  & \multirow{2}*{2.045} &  & \multirow{2}*{3.028} \\ [-4pt] 
      \multirow{2}*{$320$} & \multirow{2}*{$\frac{1}{320}$} & \multirow{2}*{3.217e-06} &  & \multirow{2}*{1.440e-05} &  & \multirow{2}*{2.525e-04} &  & \multirow{2}*{3.218e-06} & \\ [-4pt] 
      & & & \multirow{2}*{3.008} &  & \multirow{2}*{3.009} &  & \multirow{2}*{2.011} &  & \multirow{2}*{3.008} \\ [-4pt] 
      \multirow{2}*{$640$} & \multirow{2}*{$\frac{1}{640}$} & \multirow{2}*{4.000e-07} &  & \multirow{2}*{1.789e-06} &  & \multirow{2}*{6.264e-05} &  & \multirow{2}*{4.001e-07} & \\ [-4pt] 
      & & & \multirow{2}*{3.002} &  & \multirow{2}*{3.002} &  & \multirow{2}*{2.003} &  & \multirow{2}*{3.002} \\ [-4pt] 
      \multirow{2}*{$1280$} & \multirow{2}*{$\frac{1}{1280}$} & \multirow{2}*{4.993e-08} &  & \multirow{2}*{2.233e-07} &  & \multirow{2}*{1.563e-05} &  & \multirow{2}*{4.993e-08} & \\ [-4pt] 
      & & & & & & \\ \hline
    \end{tabular}
    \vspace{5pt}
  \end{minipage}
  \begin{minipage}[c]{\columnwidth}
    \centering
    \subcaption{When $ \Delta t = 10^{-4} $. }\label{table:spline-2}
    \begin{tabular}{rccccccccc}
      \hline
      \multirow{2}*{$ M $} & \multirow{2}*{$ h $} & \multirow{2}*{$ L^2 $ error} & \multirow{2}*{rate} & \multirow{2}*{$ H^1 $ error} & \multirow{2}*{rate} & \multirow{2}*{$ H^2 $ error} & \multirow{2}*{rate}  & \multirow{2}*{weighted $ H^2 $} & \multirow{2}*{rate} \\  [-4pt]
      & & & & & & \\ \hline 
      \multirow{2}*{$80$} & \multirow{2}*{$\frac{1}{80}$} & \multirow{2}*{7.981e-05} &  & \multirow{2}*{4.275e-04} &  & \multirow{2}*{4.228e-03} &  & \multirow{2}*{1.036e-04} & \\ [-4pt] 
      & & & \multirow{2}*{4.042} &  & \multirow{2}*{3.858} &  & \multirow{2}*{2.069} &  & \multirow{2}*{4.053} \\ [-4pt] 
      \multirow{2}*{$160$} & \multirow{2}*{$\frac{1}{160}$} & \multirow{2}*{4.844e-06} &  & \multirow{2}*{2.949e-05} &  & \multirow{2}*{1.008e-03} &  & \multirow{2}*{6.241e-06} & \\ [-4pt] 
      & & & \multirow{2}*{3.826} &  & \multirow{2}*{3.467} &  & \multirow{2}*{2.009} &  & \multirow{2}*{3.893} \\ [-4pt] 
      \multirow{2}*{$320$} & \multirow{2}*{$\frac{1}{320}$} & \multirow{2}*{3.416e-07} &  & \multirow{2}*{2.667e-06} &  & \multirow{2}*{2.503e-04} &  & \multirow{2}*{4.200e-07} & \\ [-4pt] 
      & & & \multirow{2}*{2.923} &  & \multirow{2}*{3.012} &  & \multirow{2}*{2.002} &  & \multirow{2}*{3.143} \\ [-4pt] 
      \multirow{2}*{$640$} & \multirow{2}*{$\frac{1}{640}$} & \multirow{2}*{4.505e-08} &  & \multirow{2}*{3.306e-07} &  & \multirow{2}*{6.250e-05} &  & \multirow{2}*{4.756e-08} & \\ [-4pt] 
      & & & \multirow{2}*{2.172} &  & \multirow{2}*{2.590} &  & \multirow{2}*{2.000} &  & \multirow{2}*{2.244} \\ [-4pt] 
      \multirow{2}*{$1280$} & \multirow{2}*{$\frac{1}{1280}$} & \multirow{2}*{9.995e-09} &  & \multirow{2}*{5.492e-08} &  & \multirow{2}*{1.562e-05} &  & \multirow{2}*{1.004e-08} & \\ [-4pt] 
      & & & & & & \\ \hline
    \end{tabular} 
    \vspace{5pt}
  \end{minipage}
  \begin{minipage}[c]{\columnwidth}
    \centering
    \subcaption{When $ h = 10^{-4} $. }\label{table:spline-3}
    \begin{tabular}{rccccccccc}
      \hline
      \multirow{2}*{$ N $} & \multirow{2}*{$ \Delta t $} & \multirow{2}*{$ L^2 $ error} & \multirow{2}*{rate} & \multirow{2}*{$ H^1 $ error} & \multirow{2}*{rate} & \multirow{2}*{$ H^2 $ error} & \multirow{2}*{rate}  & \multirow{2}*{weighted $ H^2 $} & \multirow{2}*{rate} \\  [-4pt]
      & & & & & & \\ \hline 
      \multirow{2}*{$80$} & \multirow{2}*{$\frac{1}{80}$} & \multirow{2}*{1.423e-06} &  & \multirow{2}*{3.893e-06} &  & \multirow{2}*{6.336e-06} &  & \multirow{2}*{1.423e-06} & \\ [-4pt] 
      & & & \multirow{2}*{3.079} &  & \multirow{2}*{3.058} &  & \multirow{2}*{2.969} &  & \multirow{2}*{3.079} \\ [-4pt] 
       \multirow{2}*{$160$} & \multirow{2}*{$\frac{1}{160}$} & \multirow{2}*{1.684e-07} &  & \multirow{2}*{4.673e-07} &  & \multirow{2}*{8.093e-07} &  & \multirow{2}*{1.684e-07} & \\ [-4pt] 
      & & & \multirow{2}*{3.030} &  & \multirow{2}*{3.023} &  & \multirow{2}*{1.558} &  & \multirow{2}*{3.030} \\ [-4pt] 
       \multirow{2}*{$320$} & \multirow{2}*{$\frac{1}{320}$} & \multirow{2}*{2.062e-08} &  & \multirow{2}*{5.750e-08} &  & \multirow{2}*{2.749e-07} &  & \multirow{2}*{2.062e-08} & \\ [-4pt] 
      & & & \multirow{2}*{3.015} &  & \multirow{2}*{3.015} &  & \multirow{2}*{0.090} &  & \multirow{2}*{3.015} \\ [-4pt] 
       \multirow{2}*{$640$} & \multirow{2}*{$\frac{1}{640}$} & \multirow{2}*{2.551e-09} &  & \multirow{2}*{7.112e-09} &  & \multirow{2}*{2.583e-07} &  & \multirow{2}*{2.551e-09} & \\ [-4pt] 
      & & & \multirow{2}*{3.053} &  & \multirow{2}*{3.084} &  & \multirow{2}*{0.001} &  & \multirow{2}*{3.053} \\ [-4pt] 
       \multirow{2}*{$1280$} & \multirow{2}*{$\frac{1}{1280}$} & \multirow{2}*{3.073e-10} &  & \multirow{2}*{8.386e-10} &  & \multirow{2}*{2.581e-07} &  & \multirow{2}*{3.073e-10} & \\ [-4pt] 
      & & & & & & \\ \hline 
    \end{tabular} 
    \vspace{5pt}
  \end{minipage}
\end{table}

We computed the initial value problem~\eqref{PDE} with
\begin{gather}
  \begin{aligned}
    u(x,t) & = \frac{1}{4} \sin(2\pi x + 8t), & 
    \varphi_0(x) & = \exp(\sin(4 \pi x)), &
    T & = 1 
  \end{aligned}
\end{gather}
numerically using two methods. For each scheme, we performed experiments under three different conditions for the spatial and temporal division numbers, namely, $ \Delta t = h $, $ \Delta t = 10^{-4} $ and $ h = 10^{-4} $. The spatial and temporal meshes are uniform. The results are shown in Table~\ref{table:CIP} and~\ref{table:spline}. We compute the $ L^2 $ error $ \e $ approximately by the composite Simpson's rule
\begin{gather}
  \e = \frac{
    \left[
      \frac{1}{6 \tilde{M}} \sum_{j=1}^{\tilde{M}} \left( 4 \abs{\varphi_h^N(x_{2j-1}) - \varphi^N(x_{2j-1})}^2 + 2 \abs{\varphi_h^N(x_{2j}) - \varphi^N(x_{2j})}^2 \right)
    \right]^{1/2}
  }{
    \left[
      \frac{1}{6 \tilde{M}} \sum_{j=1}^{\tilde{M}} \left( 4 \abs{\varphi^N(x_{2j-1})}^2 + 2 \abs{\varphi^N(x_{2j})}^2 \right)
    \right]^{1/2} 
  }
\end{gather}
where $ M = 1/h $, $ N = T /\Delta t $, $ \varphi^N $ is the exact solution at $ t = T $, $ \varphi_h^N $ is the numerical solution, $ \tilde{M} = 6000 $ and $ x_j = j / \tilde{M} $. 
The convergence rate $ \rho $ between $ h = h_1 $ and $ h_2 $ is computed by
  \begin{gather}
    \rho = \frac{\log \e_1 - \log \e_2}{\log h_1 - \log h_2}, 
  \end{gather}
where $ \e_1 $ and $ \e_2 $ are the errors calculated when $ h = h_1 $ and $ h_2 $ respectively. 

When $ h = \Delta t $ (see Tables~\ref{table:CIP-1} and~\ref{table:spline-1}), the convergence rates are consistent with those inferred by Theorems~\ref{thm:1-cnv-wH2},~\ref{thm:3-cnv-wH2} and their corollaries. In other words, these two schemes have third-order accuracy in time and space. 

When we fix $ \Delta t = 10^{-4} $ and vary $ h $ (see Tables~\ref{table:CIP-2} and~\ref{table:spline-2}), since we can assume that $ \Delta t^3 \gg h^4 / \Delta t $, Theorems~\ref{thm:1-cnv-wH2},~\ref{thm:3-cnv-wH2} suggest that the weighted $ H^2 $ errors are $ O(h^4 / \Delta t) $. However, the convergence rates obtained from numerical experiments are approximately $ 3 $. Further investigation is required to explain this result. 

When we fix $ h = 10^{-4} $ and vary $ \Delta t $ (see Tables~\ref{table:CIP-3} and~\ref{table:spline-3}), since we can assume that $ \Delta t^3 \gg h^4 / \Delta t $, Theorems~\ref{thm:1-cnv-wH2},~\ref{thm:3-cnv-wH2} suggest that the weighted $ H^2 $ errors are $ O(\Delta t^3) $. In fact, the observed convergence rates of the weighted $ H^2 $ errors are approximately $ 3 $. 

From these tables, we infer that $ L^2 $ errors of the two schemes are $ O(\Delta t^3 + h^3) $. Although the bound obtained in Theorem 1 includes the reciprocal number of $\Delta t$, in the numerical experiments, the $L^2$ error does not increase even when $N = 10000$. Moreover, in each scheme, we observe third-order convergence in $ H^1 $ and second-order convergence in $ H^2 $  when $ \Delta t = h $. 

\subsection{Phase errors}\label{sebsec:phase_error}

In this subsection, we demonstrate that the CIP scheme has a smaller phase error than other finite difference schemes. 

For the one-dimensional advection equation with a constant velocity~\eqref{pde-const}, we consider one-step methods with a uniform grid $ x_j = j / M $ ($ 0 \leq j \leq M-1 $) and $ t^n = nT / N $ ($ 0 \leq n \leq N $) for positive integers $ M $ and $ N $. Suppose that the numerical approximation of $ \varphi(t^n) $ is denoted by $ \varphi_h^n $. 

The discrete Fourier transform of a grid function $ v = \{v_j\}_{j=0}^{M-1} $ is defined by 
\begin{gather}
  \F(v)[k] = \sum_{j=0}^{M-1} v_j \exp(- 2 \pi i k x_j)
\end{gather}
for $ k \in \Z $, where we denote the imaginary unit by $ i $. 

\begin{definition}{(phase shift)}
  For $ k \in \N $, Let $ \varphi_h^1 $ denote the numerical approximation of $ \varphi(\Delta t) $ with the initial value $ \varphi_0(x) = \exp( 2 \pi k x) $. 
  Then, we define the phase shift by $ \theta_k = - \arg \left(\F(\varphi_h^1)[k]\right) $. 
\end{definition}

We remark that the phase shift of the exact solution is $ 2 \pi \mu k h $, where $ \mu = \frac{u \Delta t}{h} $ is the CFL number (cf. Section~3 of~\cite{KRNW66}). 

In Figure~\ref{fig:phase_shift}, we show the phase shift of the CIP scheme, the semi-Lagrangian scheme with spline interpolation, the semi-Lagrangian scheme with symmetric Lagrange interpolation~\cite{RF13}, and the first-order upwind scheme. 
The error of phase shift in the CIP scheme is significantly smaller than that of other schemes, especially when $ k h $ is close to $ 1/2 $. 

\begin{figure}[tbhp]
  \centering
  \includegraphics[width=0.8\textwidth]{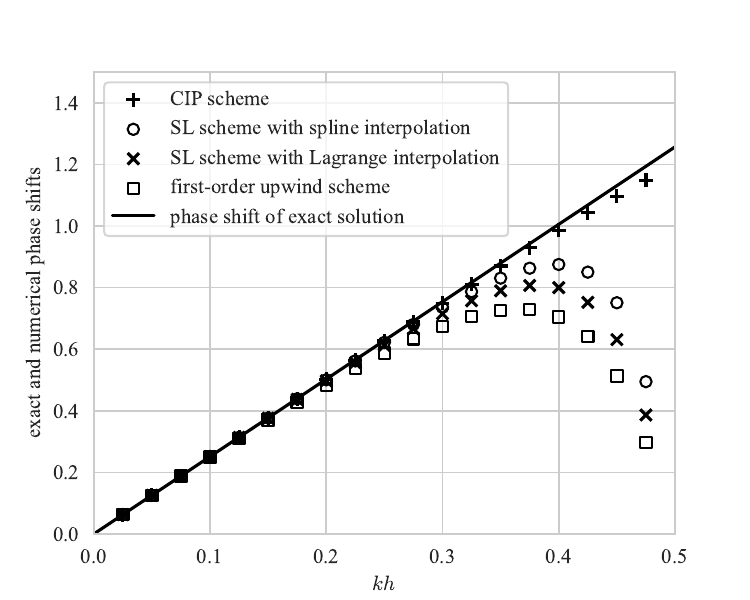} 
  \caption{Phase shifts of the CIP scheme, the semi-Lagrangian (SL) scheme with spline interpolation, the SL scheme with symmetric Lagrange interpolation, and first-order upwind scheme with the CFL number $ \mu = 0.4 $ and the spatial division number $ M = 40 $. }\label{fig:phase_shift}
\end{figure}

\section*{Acknowledgments}

The authors are grateful to Professor Shinya Uchiumi for valuable comments, which prompted the authors to refine the rate of convergence.

\end{document}